\newcommand{\bm}[1]{\boldsymbol{#1}}
\newcommand{\bmr}[1]{\bm{\mr{#1}}}
\newcommand{\lj}{[ \hspace{-2pt} [}
\newcommand{\rj}{] \hspace{-2pt} ]}
\newcommand{\mb}[1]{\mathbb{#1}}
\newcommand{\mc}[1]{\mathcal{#1}}
\newcommand{\mr}[1]{\mathrm{#1}}
\newcommand{\jump}[1]{\lj #1 \rj}
\newcommand{\aver}[1]{ \{#1\}  }
\newcommand{\wt}[1]{ \widetilde{ #1}}
\newcommand{\wh}[1]{ \widehat{ #1}}
\newcommand{\tr}[1]{\ifmmode \mathrm{tr}\left( #1 \right) \else 
\text{tr} \left( #1 \right) \fi }
\newcommand{\vect}[2]{ \begin{bmatrix} #1 \\ #2 \\ \end{bmatrix}}
\newcommand{\vech}[3]{ \begin{bmatrix} #1 \\ #2 \\ #3 \\ \end{bmatrix}}
\newcommand{\matt}[4]{ \begin{bmatrix} #1 & #3 \\ #2 & #4 \\
\end{bmatrix}}
\newcommand\curl{\ifmmode \mathrm{curl} \else \text{curl}\fi}
\renewcommand\det{\ifmmode \mathrm{det} \else \text{det} \fi}
\def\Re{\ifmmode \mathrm{Re} \else \text{Re} \fi}
\newcommand\MTh{\mc{T}_h}
\newcommand\MEh{\mc{E}_h}
\newcommand\un{\bm{\mr n}}
\renewcommand{\d}[1]{\mathrm d \boldsymbol{#1}}
\newcommand\pnorm[1]{\| #1 \|_{\bm{\mathrm{p}}}}
\newcommand\unorm[1]{\| #1 \|_{\bm{\mathrm{u}}}}
\newcommand\Unorm[1]{\| #1 \|_{\bm{\mathrm{U}}}}
\newtheorem{assumption}{Assumption}
\newtheorem{theorem}{Theorem}
\newtheorem{lemma}{Lemma}
\numberwithin{equation}{section}
\numberwithin{lemma}{section}
\numberwithin{theorem}{section}
\title[Reconstructed Discontinuous Approximation to Stokes Equation]{
  Reconstructed Discontinuous Approximation to Stokes Equation in A
  Sequential Least Squares Formulation}
\author[R. Li]{Ruo Li} \address{CAPT, LMAM and School of Mathematical
  Sciences, Peking University, Beijing 100871, P.R. China}
\email{rli@math.pku.edu.cn}
\author[F.-Y. Yang]{Fanyi Yang} \address{School of Mathematical
  Sciences, Peking University, Beijing 100871, P.R. China}
\email{yangfanyi@pku.edu.cn}
\begin{document}

\begin{abstract}
  We propose a new least squares finite element method to solve the
  Stokes problem with two sequential steps. The approximation spaces
  are constructed by patch reconstruction with one unknown per
  element. For the first step, we reconstruct an approximation space
  consisting of piecewise curl-free polynomials with zero trace. By
  this space, we minimize a least squares functional to obtain the
  numerical approximations to the gradient of the velocity and the
  pressure. In the second step, we minimize another least squares
  functional to give the solution to the velocity in the reconstructed
  piecewise divergence-free space.  We derive error estimates for all
  unknowns under $L^2$ norms and energy norms.  Numerical results in
  two dimensions and three dimensions verify the convergence rates and
  demonstrate the great flexibility of our method.

  \noindent \textbf{Keywords}: Stokes problem; Least squares finite
  element method; Reconstructed discontinuous approximation; Solenoid
  and irrotational polynomial bases;

  \noindent\textbf{MSC2010:} 65N30
\end{abstract}


\maketitle

\section{Introduction}
\label{sec_introduction}

The Stokes problem, which models a viscous and incompressible fluid
flow, is a linearized version of the full Navier-Stokes equation
neglecting the nonlinear convective term. Consequently, the Stokes
problem has a large number of applications especially for the time
discretization to the Naiver-Stokes problem. Reliable and efficient
numerical methods for the Stokes problem have been extensively studied
in the references. Among these methods, there were many efforts
devoted to develop mixed finite element methods based on the weak
formulation of the Stokes problem. A key issue of classical mixed
finite element methods is the choice of element types. The pair of
finite element spaces are required to satisfy the stability condition,
such as the inf-sup condition. We refer the readers to
\cite{boffi2013mixed, brenner2007mathematical, girault1986finite} for
some examples in classical mixed finite element methods.

The least squares finite element methods for the Stokes problem have
been developed in \cite{Cai1997first, Bochev2012locally,
Jiang1990least, Bochev1994analysis, Shin2016least,
Bochev2013nonconforming, Liu2013hybrid, Bertrand2019least}. For these
methods, least squares principle together with finite element methods
can offer the advantage of circumventing the inf-sup condition arising
in mixed methods. Bochev and Gunzburger developed a least squares
approach based on rewriting the velocity-vorticity-pressure
formulation as a first-order elliptic system
\cite{Bochev1994analysis}. Cai and his coworkers developed the least
squares finite element method based on the $L^2$ norm residual and
$C^0$ spaces for the Stokes problem, we refer to \cite{Cai1997first,
Bertrand2019least, Cai2000least, Cai2000first} for more details. Liu
et al. developed a hybrid least squares finite element method based on
continuous finite element spaces. This method attempts to combine the
advantages of FOSLS and FOSLL* \cite{Liu2013hybrid}. The works
introduced above are based on conforming finite element spaces and
such continuous least squares methods are general techniques in
numerical methods. We refer to \cite{Bochev2009least} and the
references therein for an overview of least squares finite element
methods. Based on discontinuous approximation, the discontinuous least
squares finite element methods have also been developed for many
problems including the Stokes problem, and we refer to
\cite{Bochev2013nonconforming, Bochev2012locally,
Bensow2005discontinuous, Bensow2005div, li2019least} for more details.

In this paper, we propose a new least squares finite element method
with the reconstructed discontinuous approximation. The novelty is
that we propose three specific approximation spaces which allow us to
solve the Stokes problem in two sequential steps. The sequential
process is motivated from the idea in \cite{li2019sequential,
Cai1997first} to define two least-squares-type functionals to
approximate unknowns sequentially. The feasibility of this method is
based on the new approximation spaces which are obtained by solving
local least squares problems on each element. In the first step, we
reconstruct an approximation space that consists of piecewise
irrotational polynomials with zero trace to approximate the gradient
of the velocity. This space is an extension of the space proposed in
\cite{li2012efficient}, which will also be used in this step to
approximate the pressure. The functions in both approximation spaces
may be discontinuous across interior faces and we define a least
squares functional with the weak imposition of the continuity across
the interior faces to seek numerical solutions in approximation to the
gradient and the pressure. In the second step, we reconstruct a
piecewise divergence-free polynomial space to approximate the
velocity. Here this reconstructed approximation space is also a
generalization of the space in \cite{li2012efficient}. We minimize
another least squares functional, together with the numerical gradient
obtained in the first step, to solve the numerical solution for the
velocity. For the error estimate, we introduce a series of projection
operators to derive the convergence rates for all variables with
respect to $L^2$ norms and energy norms. We prove that the
convergence orders under energy norms are all optimal and the $L^2$
errors for all variables can only be proved to be sub-optimal.  We
conduct a series of numerical examples in two dimensions and three
dimensions to confirm our theoretical error estimates. In addition,
we observe that the $L^2$ errors for all unknowns are optimally
convergent for approximation spaces of odd orders. Another advantage
of our method is the implementation is quite simple. The different
types of the reconstruction can be implemented in a uniform way. We
present the details to the computer implementation of our method in
Appendix \ref{sec_appendix}. 

The rest of our paper is organized as follow. In Section
\ref{sec_preliminaries}, we give the notations that will be used in
this paper. In Section \ref{sec_space}, we introduce the
reconstruction operators and the corresponding approximation spaces.
The approximation properties of spaces are also presented in this
section. In Section \ref{sec_LSMStokes}, we define two least squares
functionals for sequentially solving the Stokes problem with
reconstructed spaces. We also prove the error estimates under $L^2$
norms and energy norms. In Section \ref{sec_numerical_results}, we
present a series of numerical results in two dimensions and three
dimensions to illustrate the accuracy and the flexibility of our
method.


\section{Preliminaries}
\label{sec_preliminaries}
We let $\Omega$ be a convex bounded polygonal (polyhedral) domain in
$\mb{R}^d (d = 2, 3)$ with the boundary $\partial \Omega$. We denote
by $\MTh$ a set of polygonal (polyhedral) elements which partition the
domain $\Omega$. We denote by $\MEh^i$ the set of interior faces of
$\MTh$ and by $\MEh^b$ the set of the faces that are on the boundary
$\partial \Omega$. Let $\MEh = \MEh^i \cup \MEh^b$ be the set of all
$d - 1$ dimensional faces. Further, for any element $K \in \MTh$ and
any face $e \in \MEh$ we set $h_K$ as the diameter of the element $K$
and $h_e$ as the size of the face $e$, and the mesh size is denoted by
$h = \max_{K \in \MTh} h_K$. It is assumed that the partition $\MTh$
is shape-regular in the sense of that: there exist
\begin{itemize}
  \item an integer $N$ that is independent of $h$;
  \item a number $\sigma > 0$ that is independent of $h$;
  \item a compatible sub-decomposition $\wt{\mathcal T}_h$ into
    triangles (tetrahedrons);
\end{itemize}
such that 
\begin{itemize}
  \item every polygon (polyhedron) $K$ in $\MTh$ admits a
    decomposition $\wt{\mathcal T}_{h|K}$ which has at most $N$
    triangles (tetrahedrons);
  \item for any triangle (tetrahedron) $\wt{K} \in \wt{\mathcal T}_h$,
    the ratio $h_{\wt{K}} / \rho_{\wt{K}}$ is bounded by $\sigma$
    where $h_{\wt{K}}$ denotes the diameter of $\wt{K}$ and
    $\rho_{\wt{K}}$ denotes the radius of the largest disk (ball)
    inscribed in $\wt{K}$.
\end{itemize}
The above regularity requirements have several consequences
\cite{Brezzi2009mimetic, li2016discontinuous}: 
\begin{itemize}
  \item[M1] there exists a positive constant $\sigma_v$ that is
    independent of $h$  such that $\sigma_v h_K \leq h_e$ for any
    element $K \in \MTh$ and any face $e \subset \partial K$;
  \item[M2][{\it trace inequality}] there exists a constant $C$ that
    is independent of $h$ such that 
    \begin{equation}
      \| v \|_{L^2(\partial K)}^2 \leq C \left( h_K^{-1}
      \|v\|_{L^2(K)}^2 + h_K \| \nabla v \|_{L^2(K)}^2 \right), \quad
      \forall v \in H^1(K);
      \label{eq_traceinequality}
    \end{equation}
  \item[M3][{\it inverse inequality}] there exists a constant $C$ that
    is independent of $h$ such that 
    \begin{equation}
      \| \nabla v \|_{L^2(K)} \leq Ch_K^{-1} \| v \|_{L^2(K)}, \quad
      \forall v \in \mb P_k(K),
      \label{eq_inverseinequality}
    \end{equation}
    where $\mb{P}_k(\cdot)$ is the polynomial space of degree less
    than $k$.
\end{itemize}
For the sub-decomposition $\wt{\mc{T}}_h$, we denote by
$\wt{\mc{E}}_h$ the collection of all $d - 1$ dimensional faces in
$\wt{\mc{T}}_h$, and we decompose $\wt{\mc{E}}_h$ into $\wt{\mc{E}}_h
= \wt{\mc{E}}_h^i \cup \wt{\mc{E}}_h^b$ where $\wt{\mc{E}}_h^i$ and
$\wt{\mc{E}}_h^b$ are the sets of interior faces and boundary faces,
respectively. From the regularity assumption, it is clear that $\MEh
\subset \wt{\mc{E}}_h$, $\MEh^i \subset \wt{\mc{E}}_h^i$ and $\MEh^b
\subset \wt{\mc{E}}_h^b$.

Then we introduce the trace operators that are associated with weak
formulations. Let $e$ be an interior face shared by two
adjacent elements $K^+$ and $K^-$, and we let $\un^+$ and $\un^-$ be
the unit outward normal on $e$ corresponding to $\partial K^+$
and $\partial K^-$, respectively. For the scalar-valued function $v$
and the $d$ dimensional vector-valued function $\bm{q}$ and $d \times
d$ dimensional tensor-valued function $\bm{\tau}$, we define $v^+ :=
v|_{e \subset \partial K^+}$, $v^- := v|_{e \subset \partial K^-}$,
$\bm{q}^+ := \bm{q}|_{e \subset \partial K^+}$, $\bm{q}^- :=
\bm{q}|_{e \subset \partial K^-}$, $\bm{\tau}^+ := \bm{\tau}|_{e
\subset \partial K^+}$, $\bm{\tau}^- := \bm{\tau}|_{e \subset \partial
K^-}$. The average operator $\aver{\cdot}$ on $e$ is defined as 
\begin{displaymath}
  \aver{v} := \frac{1}{2} \left( v^+ + v^- \right), \quad
  \aver{\bm{q}} := \frac{1}{2} \left( \bm{q}^+ + \bm{q}^- \right),
  \quad \aver{\bm{\tau}} := \frac{1}{2} \left( \bm{\tau}^+ +
  \bm{\tau}^- \right).
\end{displaymath}
The jump operator $\jump{\cdot}$ on $e$ is defined as 
\begin{displaymath}
  \begin{aligned}
    \jump{v} &:= v^+ \un^+ + v^- \un^-, \quad \jump{ \un \times
    \bm{q}} := \un^+\times  \bm{q}^+ +  \un^-\times \bm{q}^- , \\
    \jump{\un \otimes \bm{q}} &:= \un^+ \otimes \bm{q}^+ + \un^-
    \otimes \bm{q}^- , \quad \jump{\un \otimes \bm{\tau}} := \un^+
    \otimes \bm{\tau}^+ + \un^- \otimes \bm{\tau}^-,
  \end{aligned}
\end{displaymath}
where $\otimes$ denotes the tensor product between two vectors. For
any tensor $\widehat{\bm{\tau}}$, if $\mc{O}$ is an operator on
vector-valued functions, we expend $\mc{O}$ to act on
$\widehat{\bm{\tau}}$ columnwise.  For example, $\un \otimes
\widehat{\bm{\tau}}$ is defined by
\begin{displaymath}
  \un \otimes \widehat{\bm{\tau}} := \un \otimes \left(
  \widehat{\bm{\tau}}_1, \widehat{\bm{\tau}}_2, \ldots,
  \widehat{\bm{\tau}}_n    \right) = \left(  \un \otimes
  \widehat{\bm{\tau}}_1, \un \otimes \widehat{\bm{\tau}}_2, \ldots,
  \un \otimes \widehat{\bm{\tau}}_n \right)^T,
\end{displaymath}
and $\nabla \cdot \wh{\bm{\tau}}$ is defined by 
\begin{displaymath}
  \nabla \cdot \wh{\bm{\tau}} := \nabla \cdot \left(
  \widehat{\bm{\tau}}_1, \widehat{\bm{\tau}}_2, \ldots,
  \widehat{\bm{\tau}}_n    \right) = \left(  \nabla \cdot
  \widehat{\bm{\tau}}_1, \nabla \cdot \widehat{\bm{\tau}}_2, \ldots,
  \nabla \cdot \widehat{\bm{\tau}}_n \right)^T.
\end{displaymath}
For the boundary face $e$, the trace operators are defined by
\begin{displaymath}
  \begin{aligned}
    \aver{v} := v|_e, \quad \aver{\bm{q}} &:= \bm{q}|_e, \quad
    \aver{\bm{\tau}} := \bm{\tau}|_e, \\
    \jump{v} := v|_e \un, \quad \jump{\un \times \bm{q}} &:= \un
    \times \bm{q}|_e, \quad \jump{\un \otimes \bm{q}} := \un \otimes
    \bm{q}|_e, \quad \jump{\un \otimes \bm{\tau}} := \un \otimes
    \bm{\tau}|_e,
  \end{aligned}
\end{displaymath}
where $\un$ is the unit outward normal on $e$. 

Hereafter, we let $C$ and $C$ with subscripts denote the generic
positive constants that may be different from line to line but
independent of the mesh size $h$. For a bounded domain $D$, we will
use the standard notations and definitions for the Sobolev spaces
$L^2(D)$, $L^2(D)^d$, $L^2(D)^{d \times d}$, $H^r(D)$, $H^r(D)^d$,
$H^r(D)^{d \times d}$ with $r$ a positive integer, and we will also
use their associated inner products, semi-norms and norms. We define
the space of divergence-free functions by 
\begin{displaymath}
  \bmr{S}^r(D) := \left\{ \bm{v} \in H^r(D)^d \  | \  \nabla \cdot
  \bm{v} = 0 \text{ in } D\right\}, 
\end{displaymath}
and we further define the space of tensor-valued functions by 
\begin{displaymath}
  \bmr{I}^r(D) := \left\{ \bm{\tau} \in H^r(D)^{d \times d} \ | \
  \nabla \times \bm{\tau} = 0, \quad \tr{\bm{\tau}} = 0 \text{ in } D
  \right\},
\end{displaymath}
where $\tr{\cdot}$ denotes the standard trace operator. For the
partition $\MTh$, we will use the definitions for the broken Sobolev
spaces $L^2(\MTh)$, $L^2(\MTh)^d$, $L^2(\MTh)^{d \times d}$,
$H^r(\MTh)$, $H^r(\MTh)^d$, $H^r(\MTh)^{d \times d}$ and their
corresponding broken semi-norms and norms. Moreover, for any space $X
\in L^2(\Omega)$, we let $X / \mb{R}$ consist of the functions in $X$
that have zero mean value on $\Omega$, 
\begin{displaymath}
  X / \mb{R} := \left\{ v \in X \ | \ \int_\Omega v \d{x} = 0
  \right\}.
\end{displaymath}

The incompressible Stokes problem we are studying in this paper is
formulated as: {\it seeks the velocity fields $\bm{u}$ and the
  pressure $p$ such that
\begin{equation}
  \begin{aligned}
    - \nu \Delta \bm{u} + \nabla p = \bm{f}, & \quad \text{in }
    \Omega, \\
    \nabla \cdot \bm{u} = 0, & \quad \text{in } \Omega, \\
    \bm{u} = \bm{g}, & \quad \text{on } \partial \Omega, \\
  \end{aligned}
  \label{eq_Stokes}
\end{equation}
where $\bm{f}$ is a give source term and $\bm{g}$ is the boundary data
and $\nu$ is the reciprocal of the Reynolds number.}

As being declared above, we will propose a new least squares finite
element method, together with the reconstructed discontinuous
approximation, to solve the problem \eqref{eq_Stokes}. We introduce a
new inter-mediate variable $\bmr{U}$ to substitute
$\nabla \bm{u} = (\nabla u^1, \ldots, \nabla u^d)$, thus one can
reformulate the problem \eqref{eq_Stokes} into an equivalent
first-order system:
\begin{equation}
  \begin{aligned}
    - \nu \nabla \cdot \bmr{U} + \nabla p = \bm{f},  & \quad \text{in
    } \Omega, \\
    \nabla \bm{u} - \bmr{U} = \bm{0}, & \quad \text{in } \Omega, \\
    \nabla \cdot \bm{u} = 0, & \quad \text{in } \Omega, \\
    \bm{u} = \bm{g}, & \quad \text{on } \partial \Omega. \\
  \end{aligned}
  \label{eq_firstorderStokes}
\end{equation}
In Section \ref{sec_LSMStokes}, we will go back to the Stokes problem
for the details of the sequential least squares method for the
first-order system \eqref{eq_firstorderStokes} with the reconstructed
spaces introduced in Section \ref{sec_space}.


\section{Reconstructed Approximation Space}
\label{sec_space}
In this section, we define three types of reconstruction operators
that will be used in numerically solving \eqref{eq_firstorderStokes}.
The first one is the reconstruction operator which has been used in
\cite{li2019eigenvalue, li2016discontinuous, li2019least} and the
other two operators are extensions from the first one. The
reconstruction procedure includes two parts.  The first part is to
construct the element patch and this part is the same for all
reconstruction operators. Now we present the details of the
construction to the element patch. We begin by assigning a collocation
point in each element. For each element $K$, we specify the barycenter
of $K$ as its corresponding collocation point $\bm{x}_K$. Then for
each element $K$ we aggregate an element patch $S(K)$ which is a set
of elements and consists of $K$ itself and some neighbour elements.
Specifically, the element patch $S(K)$ is constructed in a recursive
strategy. We first appoint a threshold value $\# S$ to control the
cardinality of $S(K)$.  We let $S_0(K) = \left\{ K \right\}$ and
construct a sequence of element sets $S_t(K) (t \geq 1)$ recursively: 
\begin{displaymath}
  S_{t+1}(K) = \bigcup_{\wt{K} \in S_{t}(K)}\  \bigcup_{\widehat{K}
    \in \mathcal{N}(\wt{K})} \widehat{K}, \quad t = 0, 1,
  2, \ldots
\end{displaymath}
where $\mathcal{N}(\wt{K})$ denotes the set of elements
face-neighbouring to $\wt{K}$. This recursion ends when the depth $t$
satisfies that the cardinality of $S_t(K)$ is greater than $\# S$. We
compute the distances between the collocation points of all elements
in $S_t(K)$ and the point $\bm{x}_K$. We choose the $\# S$ smallest
distances and gather the corresponding elements to form the element
patch $S(K)$. By this recursive strategy, for any element $K$ the
cardinality of $S(K)$ is always $\# S$. After constructing the element
patch, for element $K$ we denote by $\mc{I}(K)$ the set of the
collocation points to the elements in $S(K)$:
\begin{displaymath}
  \mc{I}(K) := \left\{ \bm{x}_{\wt{K}} \ | \ \forall \wt{K} \in S(K)
  \right\}.
\end{displaymath}
Then we will define three reconstruction operators to approximate the
functions in $H^r(\Omega)$, $\bmr{S}^r(\Omega)$ and $\bmr{I}^r(\Omega)
(r \geq 2)$, respectively.

\subsection{Reconstruction for Scalar-valued Functions}
\label{subsec_recon_scalar}
We denote by ${U}_h$ the piecewise constant space associated with
$\MTh$:
\begin{displaymath}
  {U}_h := \left\{ v_h \in L^2(\Omega) \ | \  v_h|_K \in \mb{P}_0(K),
  \quad \forall K \in \MTh\right\}.
\end{displaymath}
We will define a reconstruction operator $\mc{R}^m$ from the space
$U_h$ onto a subspace of the piecewise polynomial space
\cite{li2016discontinuous}. Given a piecewise constant function $g \in
U_h$, we will seek a polynomial of degree $m (m \geq 1)$ on every
element. For each element $K \in \MTh$, we solve the following
discrete least squares problem to determine a polynomial $\mc{R}_K^m
g$ of degree $m$: 
\begin{equation}
  \begin{aligned}
    \mc{R}_K^m g = & \mathop{\arg \min}_{q \in \mb{P}_m(S(K))}
    \sum_{\bm{x} \in \mc{I}(K)} |q(\bm{x}) - g(\bm{x})|^2 \\
    & \text{s.t. } q(\bm{x}_K) = g(\bm{x}_K).
  \end{aligned}
  \label{eq_scalarls}
\end{equation}
We follow \cite[Assumption B]{li2016discontinuous} to make the
assumption to ensure the problem \eqref{eq_scalarls} has a unique
solution. 
\begin{assumption}
  For any element $K \in \MTh$ and any polynomial $q(\bm{x}) \in
  \mb{P}_m(S(K))$, we have that 
  \begin{equation}
    p|_{\mc{I}(K)} = 0 \quad \text{implies} \quad p|_{S(K)} = 0.
    \label{eq_uniqueassump}
  \end{equation}
  \label{as_uniqueassump}
\end{assumption}
The Assumption \ref{as_uniqueassump} is actually a geometrical
assumption which rules out the case that the points in $\mc{I}(K)$ are
lying on an algebraic curve of degree $m$ and requires the value of
$\# S$ shall be greater than the dimension of the polynomial space
$\mb{P}_m(\cdot)$.

Then the reconstruction operator $\mc{R}^m$ is defined in a piecewise
manner: 
\begin{displaymath}
  (\mc{R}^m g)|_K = (\mc{R}_K^m g)|_K, \quad \text{on any element } K
  \in \MTh.
\end{displaymath}
We note that the polynomial $\mc{R}_K^m g$ is linearly dependent on
$g$.  Hence, the operator $\mc{R}^m$ maps $U_h$ onto a subspace of the
piecewise polynomial space, which is denoted by $U_h^m = \mc{R}^m
U_h$. For any smooth function $q \in C^0(\Omega)$, we define $\wt{q}
\in U_h$ such that 
\begin{displaymath}
  \wt{q}(\bm{x}_K) = q(\bm{x}_K), \quad \forall K \in \MTh,
\end{displaymath}
and we extend the operator $\mc{R}^m$ to act on the smooth function
by defining $\mc{R}^m q := \mc{R}^m \wt{q}$. 

In addition, we outline a group of basis functions of $U_h^m$. For any
element $K$, we define the characteristic function $w_K(\bm{x}) \in
U_h$ as
\begin{displaymath}
  w_K(\bm{x}) = \begin{cases}
    1, & \bm{x} \in K, \\
    0, & \text{otherwise}, \\
  \end{cases}
\end{displaymath}
and we denote $\lambda_K$ as $\lambda_K = \mc{R}^m w_K$.

\begin{lemma}
  The functions $\left\{ \lambda_K \right\}(\forall K \in \MTh)$ are
  linearly independent. 
  \label{le_basisfunction}
\end{lemma}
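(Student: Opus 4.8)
The plan is to show that the linear operator $\mc{R}^m\colon U_h \to U_h^m$ is injective; since $\{w_K\}_{K \in \MTh}$ is the standard basis of $U_h$ and $\lambda_K = \mc{R}^m w_K$, the asserted linear independence of $\{\lambda_K\}$ follows immediately from injectivity.

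First I would make explicit the interpolation constraint built into the local problem \eqref{eq_scalarls}: for any $g \in U_h$ and any element $K \in \MTh$, the constrained minimizer satisfies $(\mc{R}_K^m g)(\bm{x}_K) = g(\bm{x}_K)$. Because $(\mc{R}^m g)|_K = (\mc{R}_K^m g)|_K$ and the collocation point $\bm{x}_K$ (the barycenter of $K$) lies in $K$, this yields the pointwise identity $(\mc{R}^m g)(\bm{x}_K) = g(\bm{x}_K)$ for every $K \in \MTh$. Here Assumption \ref{as_uniqueassump} is invoked only to guarantee that \eqref{eq_scalarls} has a unique solution, so that $\mc{R}^m g$ is well defined.

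Next, suppose $\sum_{K \in \MTh} c_K \lambda_K = 0$ for scalars $c_K$. By linearity of $\mc{R}^m$ this is $\mc{R}^m g = 0$ with $g := \sum_{K \in \MTh} c_K w_K \in U_h$. Evaluating at the collocation point $\bm{x}_{K'}$ of an arbitrary element $K' \in \MTh$ and using the pointwise identity above gives $0 = (\mc{R}^m g)(\bm{x}_{K'}) = g(\bm{x}_{K'})$. Since $\bm{x}_{K'}$ is the barycenter of $K'$, it belongs to $K'$ and to no other element, so $w_K(\bm{x}_{K'}) = \delta_{K K'}$ and therefore $g(\bm{x}_{K'}) = c_{K'}$. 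Hence $c_{K'} = 0$, and as $K'$ was arbitrary all coefficients vanish.

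The argument is essentially immediate once the constraint $q(\bm{x}_K) = g(\bm{x}_K)$ in \eqref{eq_scalarls} is used; there is no genuine obstacle. The only point deserving a word of care is that the collocation points $\{\bm{x}_K\}$ are interior points of pairwise distinct elements, which holds by construction since they are the barycenters, and that $\mc{R}^m$ is well defined, which is exactly what Assumption \ref{as_uniqueassump} provides.
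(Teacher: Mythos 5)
Your proof is correct and follows essentially the same route as the paper: both arguments rest on the interpolation constraint in \eqref{eq_scalarls}, which forces $\lambda_K(\bm{x}_{\wt K}) = \delta_{K\wt K}$, and then evaluate a vanishing linear combination at each collocation point to kill the coefficients one by one. Your phrasing via injectivity of $\mc{R}^m$ on $U_h$ is just a repackaging of the same computation.
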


\begin{proof}
  For any $\lambda_K(\bm{x})$, the constraint in \eqref{eq_scalarls}
  implies that 
  \begin{equation}
    \lambda_K(\bm{x}_{\wt{K}}) = \begin{cases}
      1, & \wt{K} = K, \\
      0, & \wt{K} \neq K. \\
    \end{cases}
    \label{eq_lambdaKwK}
  \end{equation}
  We assume that there exist coefficients $\left\{ a_K \right\}(
  \forall K \in \MTh)$ such that 
  \begin{equation}
    \sum_{K \in \MTh} a_K \lambda_K(\bm{x}) = 0, \quad \forall \bm{x}
    \in \Omega.
    \label{eq_aKlambdaK}
  \end{equation}
  For any element $K$, let $\bm{x} = \bm{x}_K$ in \eqref{eq_aKlambdaK}
  and by \eqref{eq_lambdaKwK} one see that $a_K = 0$, which shows
  that $\left\{ \lambda_K \right\}$ are linearly independent. This
  completes the proof.
\end{proof}
Clearly, we have that $\text{dim}( \{ \lambda_K \}) =
\text{dim}(U_h)$.  The Lemma \ref{le_basisfunction} in fact gives that
the reconstructed space satisfies $\text{dim}(U_h^m) =
\text{dim}(U_h)$ and $U_h^m$ is spanned by $\left\{ \lambda_K
\right\}$. Moreover, for the function $g \in U_h$ or $g \in
C^0(\Omega)$, one may write $\mc{R}^m g$ explicitly: 
\begin{equation}
  \mc{R}^m g = \sum_{K \in \MTh} g(\bm{x}_K) \lambda_K(\bm{x}).
  \label{eq_scalarRmg}
\end{equation}
Then we give the approximation property of the space $U_h^m$. For any
element $K$, we define a constant $\Lambda(m, K)$ as
\begin{displaymath}
  \Lambda(m, K) := \max_{q \in \mb{P}_m(S(K))} \frac{\max_{\bm{x} \in
  S(K)} |q(\bm{x})| }{ \max_{ \bm{x} \in \mc{I}(K)} |q(\bm{x})|}.
\end{displaymath}
We let $\Lambda_m := \max_{K \in \MTh} (1 + \Lambda(m, K) (\#
S)^{1/2})$ and in \cite{li2012efficient, li2016discontinuous} the
authors proved that under some mild conditions on element patches, the
constant $\Lambda_m$ can be bounded uniformly with respect to the
partition.  These conditions reply on the size of element patches and
the authors also proved that if the number $\# S$ is greater than a
certain number, the conditions on element patches will be fulfilled,
see \cite[Lemma 6]{li2016discontinuous} and \cite[Lemma
3.4]{li2012efficient}.  This certain number is usually too large to be
impractical and is not recommended in the implementation. The
numerical results demonstrate that our method still has a very good
performance even we take the value $\# S$ to be far less than that
certain number. In Section \ref{sec_numerical_results} we list the
values of $\# S$ for different $m$ that are adopted in the numerical
tests.  

Then we state the following estimate. 
\begin{lemma}
  For any element $K$ and any function $g \in C^0(\Omega)$, there
  holds
  \begin{displaymath}
    \|g - \mc{R}^m g \|_{L^{\infty}(K)} \leq \Lambda_m \inf_{q \in
    \mb{P}_m(S(K))} \| g - q\|_{L^\infty(S(K))}.
  \end{displaymath}
  \label{le_scalarinftyestimate}
\end{lemma}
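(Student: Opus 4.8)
The plan is to run the standard quasi-optimality argument for a reconstruction operator that is linear and reproduces polynomials, the roles of the mesh/patch geometry being entirely absorbed into the constant $\Lambda(m,K)$. First I would record two structural facts about $\mc{R}^m$ localized to an element $K$: (i) $\mc{R}_K^m$ depends linearly on the data (this is already noted in the text and is what makes $U_h^m$ a linear space); and (ii) $\mc{R}_K^m$ reproduces $\mb{P}_m(S(K))$, i.e.\ $\mc{R}_K^m \wt{q} = q$ for every $q \in \mb{P}_m(S(K))$. Fact (ii) is immediate from the definition \eqref{eq_scalarls}: the polynomial $q$ itself makes the discrete residual vanish and satisfies the interpolation constraint $q(\bm{x}_K) = \wt{q}(\bm{x}_K)$, so by the uniqueness guaranteed by Assumption \ref{as_uniqueassump} it \emph{is} the constrained minimizer. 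Using that point evaluation at the collocation points is linear (so $\widetilde{g-q} = \wt{g} - \wt{q}$), facts (i) and (ii) give, for an arbitrary $q \in \mb{P}_m(S(K))$, the identity on $K$
\begin{displaymath}
  g - \mc{R}^m g = (g - q) - \mc{R}^m(g - q), \qquad \text{since } \mc{R}^m(g-q)\big|_K = \mc{R}_K^m\wt{g} - \mc{R}_K^m\wt{q} = \mc{R}^m g\big|_K - q .
\end{displaymath}

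Next I would apply the triangle inequality to obtain $\|g - \mc{R}^m g\|_{L^\infty(K)} \le \|g - q\|_{L^\infty(K)} + \|\mc{R}^m(g-q)\|_{L^\infty(K)}$; the first term is bounded by $\|g-q\|_{L^\infty(S(K))}$ because $K$ is one of the elements constituting the patch. The heart of the proof is then the stability estimate for the second term. Writing $\psi := g-q$ and $P := \mc{R}_K^m\wt{\psi} \in \mb{P}_m(S(K))$, I would bound $\|P\|_{L^\infty(K)} \le \|P\|_{L^\infty(S(K))} \le \Lambda(m,K)\,\max_{\bm{x}\in\mc{I}(K)}|P(\bm{x})|$ directly from the definition of $\Lambda(m,K)$, and control the discrete maximum by passing to the $\ell^2$ norm over the $\# S$ points of $\mc{I}(K)$ — this is where the factor $(\# S)^{1/2}$ enters — together with the minimizing property of $P$ against a convenient \emph{admissible} competitor: either the constant polynomial equal to $\wt{\psi}(\bm{x}_K)$, or the best $L^\infty(S(K))$ polynomial approximant of $g$ shifted by a constant so that it interpolates at $\bm{x}_K$. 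Either choice yields $\max_{\bm{x}\in\mc{I}(K)}|P(\bm{x})| \le C (\# S)^{1/2}\|\psi\|_{\ell^\infty(\mc{I}(K))} \le C(\# S)^{1/2}\|g-q\|_{L^\infty(S(K))}$.

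Combining the two pieces gives $\|g - \mc{R}^m g\|_{L^\infty(K)} \le \bigl(1 + \Lambda(m,K)(\# S)^{1/2}\bigr)\,\|g-q\|_{L^\infty(S(K))}$ up to the numerical constant from the previous step, and taking the infimum over $q \in \mb{P}_m(S(K))$ and then the maximum over elements in the definition of $\Lambda_m$ completes the argument. I expect the only delicate point to be the stability step: the local least squares problem \eqref{eq_scalarls} carries the \emph{hard} interpolation constraint $q(\bm{x}_K) = g(\bm{x}_K)$, so the competitor used to bound the minimal residual must itself satisfy that constraint, and some bookkeeping is needed to keep the resulting constant of the advertised shape $1 + \Lambda(m,K)(\# S)^{1/2}$ (rather than a larger absolute multiple of it). Since this is precisely the estimate established, in a slightly different presentation, in \cite[Lemma 3.4]{li2012efficient} and \cite[Lemma 6]{li2016discontinuous}, an acceptable alternative is simply to invoke those results.
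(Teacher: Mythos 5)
Your proposal is correct and follows essentially the same route as the paper: the paper proves this lemma by citing \cite[Lemma 3]{li2016discontinuous}, and the argument there (and the one the paper writes out explicitly for the analogous vector- and tensor-valued cases in Lemmas \ref{le_vectorinfestimate} and \ref{le_tensorinfestimate}) is exactly your combination of polynomial reproduction, the triangle inequality, and the $\Lambda(m,K)(\# S)^{1/2}$ stability bound at the collocation points. The bookkeeping issue you flag about the hard constraint $q(\bm{x}_K)=g(\bm{x}_K)$ is real but only affects the absolute constant, consistent with how the paper presents the corresponding chains of inequalities.
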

\begin{proof}
  We refer to \cite[Lemma 3]{li2016discontinuous} for the proof.
\end{proof}
From Lemma \ref{le_scalarinftyestimate}, it is easy to derive the
following approximation properties. 
\begin{theorem}
  For any element $K$, there exist constants $C$ such that 
  \begin{equation}
    \begin{aligned}
      \| g - \mc{R}^m g \|_{H^q(K)} & \leq C \Lambda_m h_K^{m
      + 1 - q} \| g \|_{H^{m+1}( S(K))}, \quad q = 0, 1, \\
      \| \nabla^q (g - \mc{R}^m g) \|_{L^2(\partial K)} & \leq C
      \Lambda_m h_K^{m + 1 - q - 1/2} \| g \|_{H^{m+1}(S(K))},
      \quad q = 0, 1, \\
    \end{aligned}
    \label{eq_scalarapproximation}
  \end{equation}
  for any $g \in H^{m+1}(\Omega)$.
  \label{th_scalarapproximation}
\end{theorem}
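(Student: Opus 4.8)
I would prove Theorem~\ref{th_scalarapproximation} by bootstrapping from the pointwise bound of Lemma~\ref{le_scalarinftyestimate}, using three standard ingredients: classical polynomial approximation (Bramble--Hilbert / Dupont--Scott) on the patch $S(K)$ after rescaling to size $O(1)$; the inverse inequality \eqref{eq_inverseinequality} and the trace inequality \eqref{eq_traceinequality}; and the Sobolev embedding $H^{m+1}(\Omega)\hookrightarrow C^0(\overline\Omega)$, valid since $m\ge1$ and $d\le3$ give $m+1>d/2$, which makes $\mc{R}^m g$ and Lemma~\ref{le_scalarinftyestimate} meaningful for $g\in H^{m+1}(\Omega)$. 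By the shape-regularity assumptions the patch $S(K)$ is a connected union of at most $N$ shape-regular simplices with diameters comparable to $h_K$, so one may fix once and for all an averaged Taylor polynomial $q^\ast\in\mb{P}_m(S(K))$ of $g$ satisfying the scaled estimates $\|g-q^\ast\|_{H^j(S(K))}\le C h_K^{m+1-j}\|g\|_{H^{m+1}(S(K))}$ for $0\le j\le m+1$, together with $\|g-q^\ast\|_{L^\infty(S(K))}\le C h_K^{m+1-d/2}\|g\|_{H^{m+1}(S(K))}$ obtained by applying the embedding on the reference patch.

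For the interior $L^2$ estimate ($q=0$) I start from $\|g-\mc{R}^m g\|_{L^2(K)}\le|K|^{1/2}\|g-\mc{R}^m g\|_{L^\infty(K)}$, apply Lemma~\ref{le_scalarinftyestimate}, and bound the infimum over $\mb{P}_m(S(K))$ using $q^\ast$; since $|K|^{1/2}\le Ch_K^{d/2}$, the powers combine as $h_K^{d/2}\cdot h_K^{m+1-d/2}=h_K^{m+1}$, giving the claimed bound with constant $C\Lambda_m$. For the $H^1$ estimate ($q=1$), where the pointwise bound no longer suffices since it controls only the function value, I split $\nabla(g-\mc{R}^m g)=\nabla(g-q^\ast)+\nabla(q^\ast-\mc{R}^m g)$: the first term is the polynomial approximation estimate with $j=1$, while $q^\ast-\mc{R}^m g$ restricted to $K$ is a polynomial, so \eqref{eq_inverseinequality} gives $\|\nabla(q^\ast-\mc{R}^m g)\|_{L^2(K)}\le Ch_K^{-1}\|q^\ast-\mc{R}^m g\|_{L^2(K)}$, and the right side is controlled by $\|q^\ast-g\|_{L^2(K)}+\|g-\mc{R}^m g\|_{L^2(K)}\le C\Lambda_m h_K^{m+1}\|g\|_{H^{m+1}(S(K))}$ via the $q=0$ case. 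Since $\Lambda_m\ge1$, the two contributions combine to $C\Lambda_m h_K^{m}\|g\|_{H^{m+1}(S(K))}$.

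The boundary estimates follow by feeding these interior bounds into \eqref{eq_traceinequality}. For $q=0$ I apply the trace inequality to $v:=g-\mc{R}^m g\in H^1(K)$; the two resulting terms are of size $h_K^{-1}h_K^{2(m+1)}$ and $h_K\cdot h_K^{2m}$, both $h_K^{2m+1}$, which gives the exponent $m+1-1/2$. For $q=1$ I once more split $\nabla v=\nabla(g-q^\ast)+\nabla(q^\ast-\mc{R}^m g)$: on $\nabla(g-q^\ast)\in H^1(K)$ I use \eqref{eq_traceinequality} with the polynomial approximation estimates for $j=1$ and $j=2$ (here $m\ge1$ is used so that the $H^2$ seminorm is available), producing the exponent $m-1/2$; on $\nabla(q^\ast-\mc{R}^m g)$, a polynomial on $K$, I combine \eqref{eq_traceinequality} and \eqref{eq_inverseinequality} into the discrete trace bound $\|w\|_{L^2(\partial K)}\le Ch_K^{-1/2}\|w\|_{L^2(K)}$ and insert the $q=1$ interior estimate already proved.

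The main obstacle is precisely the step from the pointwise control of Lemma~\ref{le_scalarinftyestimate} to control of first derivatives and of boundary traces of derivatives: Lemma~\ref{le_scalarinftyestimate} knows nothing about $\nabla(g-\mc{R}^m g)$ directly, and the resolution is the repeated device of inserting the fixed polynomial $q^\ast$ and transferring all derivative bounds onto polynomials, where the inverse inequality \eqref{eq_inverseinequality} applies. A secondary point needing care is that $S(K)$ is in general neither convex nor star-shaped with respect to a single ball, so the Bramble--Hilbert-type estimates with the stated $h_K$-scaling must be justified through its structure as a connected union of boundedly many shape-regular simplices (e.g.\ by a bounded overlapping cover by star-shaped pieces, or a Stein extension to a ball of radius $\sim h_K$); this is classical and is where the shape-regularity hypotheses on $\MTh$ enter.
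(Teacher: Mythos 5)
Your proof is correct and follows essentially the same route the paper takes: the paper delegates this statement to the cited reference, but the argument it has in mind is exactly the one it writes out for the vector and tensor analogues (Theorems \ref{th_vectorapproximation} and \ref{th_tensorapproximation}), namely a Bramble--Hilbert polynomial $q^\ast$ on $S(K)$ fed into the $L^\infty$ stability bound of Lemma \ref{le_scalarinftyestimate}, conversion to $L^2$ via the factor $h_K^{d/2}$, the inverse inequality to recover the $H^1$ bound, and the trace inequality for the boundary terms. Your additional remarks on the Sobolev embedding needed to make $\mc{R}^m g$ pointwise well defined and on the non-star-shapedness of $S(K)$ are correct refinements of details the paper leaves implicit.
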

\begin{proof}
  We refer to \cite[Lemma 4]{li2016discontinuous} for the proof.
\end{proof}

\subsection{Reconstruction for Vector-valued Functions}
\label{subsec_recon_vector}
Here we consider to extend the reconstruction process for
vector-valued functions.  Precisely, we will introduce a
reconstruction operator for functions in the space
$\bmr{S}^{m+1}(\Omega)$. We also start from the piecewise constant
space $(U_h)^d$. Given a function $\bm{g} \in (U_h)^d$ and for any
element $K \in \MTh$, we solve a polynomial $\wt{\mc{R}}_K^m \bm{g}$
of degree $m$ on $S(K$) by the following discrete least squares
problem: 
\begin{equation}
  \begin{aligned}
    \wt{\mc{R}}_K^m \bm{g} = \mathop{\arg \min}_{\bm{q} \in \mb{P}_m
    (S(K))^d} & \sum_{\bm{x} \in \mc{I}(K)} \| \bm{q}(\bm{x}) -
    \bm{g}(\bm{x})\|_{l^d}^2, \\ 
    \text{s.t. } & \begin{cases}
      \bm{q}(\bm{x}_K) = \bm{g}(\bm{x}_K), \\
      \nabla \cdot \bm{q} = 0 , \\
    \end{cases}
  \end{aligned}
  \label{eq_vectorlsproblem}
\end{equation}
where
\begin{displaymath}
  \|\bm{v} \|_{l^d}^2 := v_1^2 + \ldots + v_d^2, \quad \forall \bm{v}
  = (v_1, \ldots, v_d)^T \in \mb{R}^d.
\end{displaymath}
Based on Assumption \ref{as_uniqueassump}, it is trivial to check the
uniqueness and the existence of the solution to the problem
\eqref{eq_vectorlsproblem}. Then the reconstruction operator
$\wt{\mc{R}}^m$ is piecewise defined as 
\begin{displaymath}
  (\wt{\mc{R}}^m \bm{g})|_K = (\wt{\mc{R}}_K^m \bm{g})|_K, \quad
  \text{on any element } K \in \MTh.
\end{displaymath}
It should be noted that $\wt{\mc{R}}^m_K \bm{g}$ still has a linear
dependence on $\bm{g}$.  Therefore, we can know that the operator
$\wt{\mc{R}}^m$ maps the space $(U_h)^d$ into the piecewise
divergence-free polynomial space of degree $m$, and we denote by
$\bmr{S}_h^m = \wt{\mc{R}}^m (U_h)^d$. We also extend the operator
$\wt{\mc{R}}^m$ to act on the smooth function as the operator
$\mc{R}^m$. For the function $\bm{g}(\bm{x}) \in C^0(\Omega)^d$, we
define a piecewise constant function $\wt{\bm{g}}(\bm{x}) \in (U_h)^d$
such that 
\begin{displaymath}
  \wt{\bm{g}}(\bm{x}_K) = \bm{g}(\bm{x}_K), \quad \forall K \in \MTh,
\end{displaymath}
and we define $\wt{\mc{R}}^m \bm{g} := \wt{\mc{R}}^m \wt{\bm{g}}$.  We
also present a group of basis functions to the space $\bmr{S}_h^m$.
For any element $K$, we define an indicator function
$\wt{\bm{w}}_K^i(\bm{x}) \in (U_h)^d$, which reads 
\begin{displaymath}
  \wt{\bm{w}}_K^i(\bm{x}) = \begin{cases}
    \bm{e}_i, & \bm{x} \in K, \\
    0, & \text{otherwise}, \\
  \end{cases}
\end{displaymath}
where $\bm{e}_i$ is a $d \times 1$ unit vector whose $i$-th entry is
$1$.  Then we define $\wt{\bm{\lambda}}_K^i$ as $\wt{\bm{\lambda}}_K^i
= \wt{\mc{R}}^m_K \wt{\bm{w}}_K^i$ and we have the following lemma. 

\begin{lemma}
  The functions $\left\{ \bm{\lambda}_K^i \right\}(\forall K \in \MTh,
  1 \leq i \leq d)$ are linearly independent.
  \label{le_vectorbasisfunction}
\end{lemma}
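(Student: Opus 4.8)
The plan is to reproduce the argument of Lemma~\ref{le_basisfunction} essentially verbatim, the only new feature being that the equality constraint in the local least squares problem now reproduces a Cartesian unit vector instead of the scalar value~$1$; throughout, $\wt{\bm{\lambda}}_K^i$ (the functions written $\bm{\lambda}_K^i$ in the statement) denotes the global piecewise polynomial $\wt{\mc{R}}^m \wt{\bm{w}}_K^i \in \bmr{S}_h^m$, which may be evaluated at any barycenter. The first step is to establish the vector analogue of \eqref{eq_lambdaKwK}. Fix two elements $K, \wt{K} \in \MTh$. The point $\bm{x}_{\wt{K}}$ is the collocation point of $\wt{K}$, hence it is precisely the point entering the equality constraint of the local problem \eqref{eq_vectorlsproblem} posed over $S(\wt{K})$; moreover it lies in the interior of $\wt{K}$, so on a neighbourhood of it the field $\wt{\bm{\lambda}}_K^i$ coincides with the single polynomial $\wt{\mc{R}}_{\wt{K}}^m \wt{\bm{w}}_K^i$. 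Evaluating at $\bm{x}_{\wt{K}}$ and invoking that constraint gives $\wt{\bm{\lambda}}_K^i(\bm{x}_{\wt{K}}) = \wt{\bm{w}}_K^i(\bm{x}_{\wt{K}})$, so by the definition of the indicator function
\begin{displaymath}
  \wt{\bm{\lambda}}_K^i(\bm{x}_{\wt{K}}) = \begin{cases}
    \bm{e}_i, & \wt{K} = K, \\
    \bm{0}, & \wt{K} \neq K.
  \end{cases}
\end{displaymath}

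Next I would suppose that $\sum_{K \in \MTh}\sum_{i=1}^d a_K^i \wt{\bm{\lambda}}_K^i \equiv \bm{0}$ on $\Omega$ for some coefficients $a_K^i$, fix an arbitrary element $K_0 \in \MTh$, and substitute $\bm{x} = \bm{x}_{K_0}$. By the identity just obtained every term with $K \neq K_0$ drops out, leaving $\sum_{i=1}^d a_{K_0}^i \bm{e}_i = \bm{0}$ in $\mb{R}^d$; since $\{\bm{e}_i\}_{i=1}^d$ is a basis of $\mb{R}^d$, this forces $a_{K_0}^i = 0$ for every $i$. As $K_0$ was arbitrary, all the coefficients vanish, which is exactly the asserted linear independence.

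I do not anticipate a genuine obstacle: the proof is a componentwise transcription of the scalar case. The two points worth a line of care are that the divergence-free side constraint in \eqref{eq_vectorlsproblem} plays no role here --- it only fixes the space $\bmr{S}_h^m$ in which $\wt{\bm{\lambda}}_K^i$ lives, not the interpolated value used above --- and that, exactly as in the scalar setting, the lemma immediately yields $\dim(\bmr{S}_h^m) = d\,\dim(U_h)$, shows that $\{\wt{\bm{\lambda}}_K^i\}$ is a basis of $\bmr{S}_h^m$, and hence gives the explicit representation $\wt{\mc{R}}^m \bm{g} = \sum_{K \in \MTh}\sum_{i=1}^d g_i(\bm{x}_K)\,\wt{\bm{\lambda}}_K^i$ for $\bm{g} = (g_1, \dots, g_d)^T \in (U_h)^d$ (or $C^0(\Omega)^d$), the vector counterpart of \eqref{eq_scalarRmg}.
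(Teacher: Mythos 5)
Your proposal is correct and is exactly the argument the paper intends: it invokes the interpolation constraint $\bm{q}(\bm{x}_K)=\bm{g}(\bm{x}_K)$ in \eqref{eq_vectorlsproblem} to get the vector analogue of \eqref{eq_lambdaKwK} and then evaluates a vanishing linear combination at each collocation point, exactly mirroring the proof of Lemma~\ref{le_basisfunction} as the paper's one-line proof indicates. Your added care about evaluating the global reconstruction at $\bm{x}_{\wt{K}}$ via the local solve on $S(\wt{K})$, and your remark that the divergence-free constraint is irrelevant to the argument, are both accurate.
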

\begin{proof}
  The proof results from the constraint in \eqref{eq_vectorlsproblem}
  and is similar to the proof of Lemma \ref{le_basisfunction}.
\end{proof}
Analogously, we conclude that the space $\bmr{S}_h^m$ is spanned by $\{
\wt{\bm{\lambda}}_K^i \}$ and for the function $\bm{g} = (g^1, \ldots,
g^d) \in (U_h)^d$ or $\bm{g} = (g^1, \ldots, g^d) \in C^0(\Omega)^d$,
we can write $\wt{\mc{R}}^m \bm{g}$ as 
\begin{displaymath}
  \wt{\mc{R}}^m \bm{g} = \sum_{K \in \MTh} \sum_{i = 1}^d
  g^i(\bm{x}_K) \wt{\bm{\lambda}}_K^i(\bm{x}).
\end{displaymath}
Further, we give the approximation property of the operator
$\wt{\mc{R}}^m$.

\begin{lemma}
  For any element $K$ and any function $\bm{g} \in C^0(\Omega)^d$,
  there holds
  \begin{equation}
    \|\bm{g} - \wt{\mc{R}}^m \bm{g} \|_{L^\infty(K)} \leq \sqrt{d}
    \Lambda_m \inf_{\bm{q} \in \mb{P}_m(S(K))^d \cap \bmr{S}^0(S(K))}
    \| \bm{g} - \bm{q} \|_{L^\infty(S(K))}.
    \label{eq_vectorinfestimate}
  \end{equation}
  \label{le_vectorinfestimate}
\end{lemma}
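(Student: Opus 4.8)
The plan is to run the scalar argument of Lemma~\ref{le_scalarinftyestimate} (equivalently \cite[Lemma 3]{li2016discontinuous}) component by component, the only genuinely new features being the way the divergence-free constraint in \eqref{eq_vectorlsproblem} enters and the appearance of the factor $\sqrt{d}$.

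I would first record two structural properties of the local operator $\wt{\mc{R}}_K^m$. It depends linearly on its data (already noted in the text), and it is \emph{exact on the solenoidal polynomial space}: if $\bm{p} \in \mb{P}_m(S(K))^d \cap \bmr{S}^0(S(K))$, then, taking $\bm{g} = \bm{p}$, the candidate $\bm{q} = \bm{p}$ is admissible for \eqref{eq_vectorlsproblem} — it is divergence-free and agrees with $\bm{p}$ at $\bm{x}_K$ — and it annihilates the discrete functional, so by uniqueness of the least squares solution $\wt{\mc{R}}_K^m \bm{p} = \bm{p}$. Uniqueness is not affected by adjoining the constraints $\nabla \cdot \bm{q} = 0$, which merely shrink the feasible polynomial set, while unisolvence of the collocation values is provided by Assumption~\ref{as_uniqueassump} applied to each component. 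This exactness is precisely the reason the infimum in \eqref{eq_vectorinfestimate} ranges over $\mb{P}_m(S(K))^d \cap \bmr{S}^0(S(K))$ and not over all of $\mb{P}_m(S(K))^d$: a polynomial with nonzero divergence is not an admissible competitor and need not be reproduced.

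Next I would fix an arbitrary $\bm{q} \in \mb{P}_m(S(K))^d \cap \bmr{S}^0(S(K))$; since a constant vector is itself divergence-free, adding to $\bm{q}$ the constant $\bm{g}(\bm{x}_K) - \bm{q}(\bm{x}_K)$ keeps it in the solenoidal polynomial space, so I may assume $\bm{q}(\bm{x}_K) = \bm{g}(\bm{x}_K)$. Setting $\bm{\phi} := \bm{g} - \bm{q}$, which vanishes at $\bm{x}_K$, linearity and exactness give, on $K$, $\bm{g} - \wt{\mc{R}}^m \bm{g} = \bm{\phi} - \wt{\mc{R}}_K^m \bm{\phi}$, so it remains to control $\wt{\mc{R}}_K^m \bm{\phi}$, a solenoidal polynomial of degree at most $m$. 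The crucial point is that, read off on the collocation set $\mc{I}(K)$, the map $\bm{\phi} \mapsto \wt{\mc{R}}_K^m \bm{\phi}$ is the Euclidean orthogonal projection of the sample vector onto the sample vectors of those admissible polynomials that vanish at $\bm{x}_K$, hence is non-expansive in the discrete $\ell^2(\mc{I}(K))$ norm; since $\bm{\phi}$ vanishes at $\bm{x}_K$, this bounds $\max_{\bm{x} \in \mc{I}(K)} \|\wt{\mc{R}}_K^m \bm{\phi}(\bm{x})\|_{l^d}$ by $(\# S)^{1/2} \|\bm{\phi}\|_{L^\infty(S(K))}$. Applying the definition of $\Lambda(m,K)$ to each scalar component of $\wt{\mc{R}}_K^m \bm{\phi}$ and re-assembling — this is where the $\sqrt{d}$ enters, in passing between $\|\cdot\|_{l^d}$ and the component-wise sup-norm — then gives $\|\wt{\mc{R}}_K^m \bm{\phi}\|_{L^\infty(S(K))} \le \sqrt{d}\, \Lambda(m,K) (\# S)^{1/2} \|\bm{\phi}\|_{L^\infty(S(K))}$. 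A triangle inequality on $K$, the trivial bound $\|\bm{\phi}\|_{L^\infty(K)} \le \|\bm{\phi}\|_{L^\infty(S(K))}$, and $1 + \sqrt{d}\, \Lambda(m,K) (\# S)^{1/2} \le \sqrt{d}\, \Lambda_m$ then yield $\|\bm{g} - \wt{\mc{R}}^m \bm{g}\|_{L^\infty(K)} \le \sqrt{d}\, \Lambda_m \|\bm{g} - \bm{q}\|_{L^\infty(S(K))}$; taking the infimum over $\bm{q}$ closes the argument, the precise handling of the constant when $\bm{q}$ is not already normalized to interpolate $\bm{g}$ at $\bm{x}_K$ being identical to \cite[Lemma 3]{li2016discontinuous}.

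I do not expect a real obstacle: the reasoning is the scalar one run componentwise. The two points that need care are (i) making sure every object fed to the least squares problem — the reference polynomial $\bm{q}$, its constant shift, and the competitor used in the minimality step — stays inside the solenoidal feasible set, and that Assumption~\ref{as_uniqueassump} still delivers well-posedness of \eqref{eq_vectorlsproblem} once $\nabla \cdot \bm{q} = 0$ is imposed; and (ii) tracking the single factor $\sqrt{d}$ through the conversion between the Euclidean norm $\|\cdot\|_{l^d}$ of \eqref{eq_vectorlsproblem} and the component-wise sup-norm estimates governed by $\Lambda(m,K)$. The $\ell^2$-nonexpansiveness of the least squares projection is what keeps the constant from degrading beyond $\sqrt{d}\, \Lambda_m$.
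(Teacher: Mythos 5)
Your proposal is correct and follows essentially the same route as the paper's proof: exactness of $\wt{\mc{R}}_K^m$ on $\mb{P}_m(S(K))^d \cap \bmr{S}^0(S(K))$, a triangle inequality, the $\Lambda(m,K)$ bound from the collocation set to $S(K)$, the $\ell^2$-nonexpansiveness of the constrained least squares fit yielding the $(\#S)^{1/2}$ factor, and the $\sqrt{d}$ from switching between $\|\cdot\|_{l^d}$ and componentwise sup-norms. Your explicit normalization of $\bm{q}$ at $\bm{x}_K$ by a constant (solenoidal) shift is a slightly more careful handling of the interpolation constraint than the paper spells out, but it is a refinement of the same argument, not a different one.
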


\begin{proof}
  For any divergence-free polynomial $\bm{q} \in \mb{P}_m(S(K))^d \cap
  \bmr{S}^0(S(K))$, we clearly have that $\wt{\mc{R}}^m_K \bm{q} =
  \bm{q}$ from the definition of the least squares problem
  \eqref{eq_vectorlsproblem}. We deduce that
  \begin{displaymath}
    \begin{aligned}
      \|\bm{g} - \wt{\mc{R}}^m \bm{g} \|_{L^\infty(K)} & \leq
      \|\bm{g} - \bm{q} \|_{L^\infty(K)} +  \| \wt{\mc{R}}^m ( \bm{q}
      - \bm{g}) \|_{L^\infty(K)} \\
      & \leq   \|\bm{g} - \bm{q} \|_{L^\infty(K)} + \Lambda(m, K)
      \max_{\bm{x} \in \mc{I}(K)} |\wt{\mc{R}}^m ( \bm{q} - \bm{g}) |
      \\
      & \leq  \|\bm{g} - \bm{q} \|_{L^\infty(K)} + \Lambda(m, K)
      \sqrt{d \# S } \max_{\bm{x} \in \mc{I}(K)} | \bm{q} - \bm{g} |
      \\
      & \leq \sqrt{d} \left( 1  + \Lambda(m, K)\sqrt{\# S} \right)
      \|\bm{g} - \bm{q} \|_{L^\infty(S(K))}, \\
    \end{aligned}
  \end{displaymath}
  which gives us the estimate \eqref{eq_vectorinfestimate} and
  completes the proof.
\end{proof}

\begin{theorem}
  For any element $K$, there exist constants $C$ such that 
  \begin{equation}
    \begin{aligned}
      \| \bm{g} - \wt{\mc{R}}^m \bm{g} \|_{H^q(K)} & \leq C \Lambda_m
      h_K^{m + 1 - q} \| \bm{g} \|_{H^{m+1}( S(K))}, \quad q = 0, 1,
      \\
      \| \nabla^q (\bm{g} - \wt{\mc{R}}^m \bm{g}) \|_{L^2(\partial K)}
      & \leq C \Lambda_m h_K^{m + 1 - q - 1/2} \| \bm{g}
      \|_{H^{m+1}(S(K))}, \quad q = 0, 1, \\
    \end{aligned}
    \label{eq_vectorapproximation}
  \end{equation}
  for any $\bm{g} \in \bmr{S}^{m+1}(\Omega)$.
  \label{th_vectorapproximation}
\end{theorem}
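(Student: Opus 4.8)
\textit{Proof proposal.} The plan is to mimic the proof of the scalar Theorem~\ref{th_scalarapproximation} (i.e.\ \cite[Lemma~4]{li2016discontinuous}): deduce the $L^2$ and the trace bounds from an $L^\infty$ bound, the $L^\infty$ bound coming from Lemma~\ref{le_vectorinfestimate}. The one genuinely new point is that the polynomial competitor in Lemma~\ref{le_vectorinfestimate} must be divergence-free. First I would note that for $m\ge 1$ and $d\le 3$ one has $\bmr S^{m+1}(\Omega)\subset H^{m+1}(\Omega)^d\subset C^0(\overline\Omega)^d$, so $\wt{\mc R}^m\bm g$ is well defined and Lemma~\ref{le_vectorinfestimate} gives
\[
  \|\bm g-\wt{\mc R}^m\bm g\|_{L^\infty(K)}\le \sqrt d\,\Lambda_m\inf_{\bm q\in\mb P_m(S(K))^d\cap\bmr S^0(S(K))}\|\bm g-\bm q\|_{L^\infty(S(K))}.
\]
To build a good admissible $\bm q$ I would take $\bm q:=Q^m\bm g$, the \emph{componentwise averaged Taylor polynomial} of degree $\le m$ over a ball contained in $K$. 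The key observation is that $Q^m$ commutes with differentiation, $\partial^\alpha Q^m=Q^{m-|\alpha|}\partial^\alpha$, so $\nabla\cdot(Q^m\bm g)=Q^{m-1}(\nabla\cdot\bm g)=0$ whenever $\bm g\in\bmr S^{m+1}(\Omega)$; hence $\bm q$ lies in $\mb P_m(S(K))^d\cap\bmr S^0(S(K))$ and is a legitimate competitor. The Bramble--Hilbert/Dupont--Scott estimates on $S(K)$ (applicable since, by shape-regularity and the bound on $\#S$, $S(K)$ sits in a ball of radius $Ch_K$ and is a finite union of shape-regular simplices, exactly as in \cite{li2016discontinuous}), together with $H^{m+1}\hookrightarrow L^\infty$ and dimensional scaling, then give $\|\bm g-Q^m\bm g\|_{L^\infty(S(K))}\le C h_K^{m+1-d/2}\|\bm g\|_{H^{m+1}(S(K))}$ and $\|\nabla^j(\bm g-Q^m\bm g)\|_{L^2(S(K))}\le C h_K^{m+1-j}\|\bm g\|_{H^{m+1}(S(K))}$ for $j=0,1,2$.

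Combining the $L^\infty$ bounds yields $\|\bm g-\wt{\mc R}^m\bm g\|_{L^\infty(K)}\le C\Lambda_m h_K^{m+1-d/2}\|\bm g\|_{H^{m+1}(S(K))}$, and multiplying by $|K|^{1/2}\le Ch_K^{d/2}$ gives the $q=0$ case of \eqref{eq_vectorapproximation}. For the derivative estimates I would \emph{not} differentiate in $L^\infty$ (the embedding $H^{m+1}\hookrightarrow W^{1,\infty}$ fails for $m=1$), but instead split $\bm g-\wt{\mc R}^m\bm g=(\bm g-Q^m\bm g)+(Q^m\bm g-\wt{\mc R}^m\bm g)$ on $K$. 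The first term is controlled by the $L^2$ Bramble--Hilbert estimate just quoted. The second term is a polynomial of degree $\le m$ on $K$, so the inverse inequality \eqref{eq_inverseinequality} bounds its $H^j(K)$-seminorm by $Ch_K^{-j}\|Q^m\bm g-\wt{\mc R}^m\bm g\|_{L^2(K)}$, and by the triangle inequality the latter is $\le\|\bm g-Q^m\bm g\|_{L^2(K)}+\|\bm g-\wt{\mc R}^m\bm g\|_{L^2(K)}\le C\Lambda_m h_K^{m+1}\|\bm g\|_{H^{m+1}(S(K))}$. This gives $\|\nabla^j(\bm g-\wt{\mc R}^m\bm g)\|_{L^2(K)}\le C\Lambda_m h_K^{m+1-j}\|\bm g\|_{H^{m+1}(S(K))}$ for $j=0,1,2$, in particular the $q=1$ case. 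Finally the two boundary estimates follow by applying the trace inequality \eqref{eq_traceinequality} to $\nabla^q(\bm g-\wt{\mc R}^m\bm g)$ and inserting the interior bounds for $\nabla^q$ and $\nabla^{q+1}$.

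The main obstacle is the construction in the first paragraph: one must produce a polynomial approximant that is \emph{exactly} divergence-free yet still of optimal order, and one must be sure the Bramble--Hilbert machinery really applies on the patch $S(K)$, which is a bounded union of shape-regular simplices rather than a single star-shaped domain. Once that is in place, everything else is routine bookkeeping parallel to \cite[Lemma~4]{li2016discontinuous}; the additional factors $\sqrt d$ from Lemma~\ref{le_vectorinfestimate} are harmless and are absorbed into the generic constant.
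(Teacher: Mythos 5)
Your proposal is correct and follows essentially the same route as the paper's proof: reduce to an $L^\infty$ bound via Lemma~\ref{le_vectorinfestimate}, obtain the $H^1$ bound by a triangle inequality against the polynomial competitor together with the inverse inequality \eqref{eq_inverseinequality}, and finish with the trace inequality \eqref{eq_traceinequality}. The only difference is that where the paper simply cites \cite[Theorem 4.1]{baker1990piecewise} (together with the patch assumption from \cite{li2016discontinuous}) for the existence of an optimal-order competitor $\wt{\bm{q}}\in\mb{P}_m(S(K))^d\cap\bmr{S}^0(S(K))$, you construct one explicitly as a componentwise averaged Taylor polynomial and verify $\nabla\cdot(Q^m\bm{g})=Q^{m-1}(\nabla\cdot\bm{g})=0$ via the commutation property, which is a correct and more self-contained substitute for that citation.
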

\begin{proof}
  By \cite[Theorem 4.1]{baker1990piecewise} and \cite[Assumption
  A]{li2016discontinuous}, there exists an approximation polynomial
  $\wt{\bm{q}} \in \mb{P}_m(S(K))^d \cap \bmr{S}^0(S(K))$ such that 
  \begin{displaymath}
    \| \bm{g} - \wt{\bm{q}} \|_{L^\infty (S(K))} \leq C h_K^{m + 1 -
    d/2} \| \bm{g} \|_{H^{m+1}(S(K))}. 
  \end{displaymath}
  By Lemma \ref{le_vectorinfestimate}, we obtain that
  \begin{displaymath}
    \begin{aligned}
      \|\bm{g} - \wt{\mc{R}}^m \bm{g} \|_{L^2(K)} & \leq
      Ch_K^{d/2} \| \bm{g} - \wt{\mc{R}}^m \bm{g} \|_{L^\infty(K)}
      \leq Ch_K^{d/2} \Lambda_m \| \bm{g} - \wt{\bm{q}} \|_{L^\infty(
      S(K))} \\ 
      &\leq C \Lambda_m h_K^{m + 1} \| \bm{g} \|_{H^{m+1}(
      S(K))}, \\
    \end{aligned}
  \end{displaymath}
  and together with the inverse inequality M2, we have that 
  \begin{displaymath}
    \begin{aligned}
      \| \bm{g} - \wt{\mc{R}}^m \bm{g} \|_{H^1(K)} & \leq \|\bm{g} -
      \wt{\bm{q}} \|_{H^1(K)} + \|\wt{\bm{q}} - \wt{\mc{R}}^m \bm{g}
      \|_{H^1(K)}  \leq  \|\bm{g} - \wt{\bm{q}} \|_{H^1(K)} +
      Ch_K^{-1} \|\wt{\bm{q}} - \wt{\mc{R}}^m \bm{g} \|_{L^2(K)} \\
      & \leq C \Lambda_m h_K^{m + 1} \| \bm{g} \|_{H^{m+1}( S(K))}. 
    \end{aligned}
  \end{displaymath}
  Applying the trace inequality M2 gives the trace estimate in
  \eqref{eq_vectorapproximation}, which completes the proof.
\end{proof}

\subsection{Reconstruction for Tensor-valued Functions}
\label{subsec_recon_tensor}
In this subsection, we consider the reconstruction for the
tensor-valued functions in the space $\bmr{I}^{m+1}(\Omega)$. Again,
we start from a piecewise constant space. Since the functions in
$\bmr{I}^{m+1}(\Omega)$ have zero trace, we define the space
$\bmr{U}_h$ consisting of piecewise constant functions with zero
trace, which reads
\begin{displaymath}
  \bmr{U}_h := \left\{ \bm{v}_h \in (U_h)^{d \times d} \ | \
  \tr{\bm{v}_h} = 0 \right\}.
\end{displaymath}
For the function $\bm{g} \in \bmr{U}_h$, we will seek a polynomial
$\wh{\mc{R}}_K^m \bm{g}$ of degree $m$ on $S(K)$ by solving the
following problem:
\begin{equation}
  \begin{aligned}
    \wh{\mc{R}}_K^m \bm{g} = \mathop{\arg \min}_{\bm{q} \in
    \mb{P}_m(S(K))^{d \times d}} & \sum_{\bm{x} \in \mc{I}(K)} \|
    \bm{q}(\bm{x}) - \bm{g}(\bm{x})\|_{l^{d \times d}}^2, \\
    \text{s.t. } & \begin{cases}
    \bm{q}(\bm{x}_K) = \bm{g}(\bm{x}_K), \\
    \nabla \times \bm{q} = 0,  \\
     \tr{\bm{q}} = 0, \\
    \end{cases}
  \end{aligned}
  \label{eq_tensorlsproblem}
\end{equation}
where 
\begin{displaymath}
  \| \bm{\tau} \|_{l^d \times l^d}^2 := \| \bm{\tau}_1 \|_{l^d}^2 +
  \ldots + \| \bm{\tau}_d \|_{l^d}^2, \quad \forall \bm{\tau} =
  (\bm{\tau}_1, \ldots, \bm{\tau}_d) \in \mb{R}^{d \times d}.
\end{displaymath}
Similarly, we have that the problem \eqref{eq_tensorlsproblem} has a
unique solution by Assumption \ref{as_uniqueassump}. The global
reconstruction operator $\wh{\mc{R}}^m$ is piecewise defined by 
\begin{displaymath}
  (\wh{\mc{R}}^m \bm{g})|_K = (\wh{\mc{R}}_K^m \bm{g})|_K, \quad
  \text{on any element } K \in \MTh.
\end{displaymath}
The solution $\wh{\mc{R}}_K^m \bm{g}$ is linearly dependent on
$\bm{g}$ and we denote by $\bmr{I}_h^m$ the image of the operator
$\wh{\mc{R}}^m$.  Then we still extend the operator $\wh{\mc{R}}^m$ to
act on the smooth functions. For the function $\bm{g}(\bm{x}) \in
C^0(\Omega)^{d \times d}$ with zero trace, we let $\wh{\bm{g}}(\bm{x})
\in \bmr{U}_h$ such that
\begin{displaymath}
  \wh{\bm{g}}(\bm{x}_K) = \bm{g}(\bm{x}_K), \quad \forall K \in \MTh,
\end{displaymath}
and again we define $\wh{\mc{R}}^m \bm{g} := \wh{\mc{R}}^m
\wh{\bm{g}}$.  Here we give a group of basis functions of the space
$\bmr{I}_h^m$. We will define a group of characteristic functions
$\wh{\bm{w}}_K^{i, j}(\bm{x}) \in \bmr{U}_h$ but we shall consider the
zero trace condition of functions in $\bmr{U}_h$. Actually this
condition implies that there are only $d^2 - 1$ characteristic
functions on every element.  For any element $K$, we define
$\wh{\bm{w}}_K^{i, j}(\bm{x})$ as 
\begin{displaymath}
  \wh{\bm{w}}_K^{i, j}(\bm{x}) = \begin{cases}
    \bm{e}_{i, j}, & \bm{x} \in K, \\
    0, & \text{otherwise}, \\
  \end{cases} \quad 1 \leq i \neq j \leq d, 
\end{displaymath}
where $e_{i, j}$ is the $d \times d$ matrix whose $(i, j)$ entry is
$1$ and the other entries are $0$. For $1 \leq i < d$,  we 
define $\wh{\bm{w}}_K^{i, i}(\bm{x})$ as 
\begin{displaymath}
  \wh{\bm{w}}_K^{i, i}(\bm{x}) = \begin{cases}
    \wh{\bm{e}}_{i, i}, & \bm{x} \in K, \\
    0, & \text{otherwise}, \\
  \end{cases}
\end{displaymath}
where $\wh{\bm{e}}_{i, i}$ is the $d \times d$ matrix whose $(i, i)$
entry is $1$, $(d, d)$ entry is $-1$ and other entries are $0$. 

We define $\wh{\bm{\lambda}}_K^{i, j} = \wh{\mc{R}}^m
\wh{\bm{w}}_K^{i,j}$ and we state that the functions $\{
\wh{\bm{\lambda}}_K^{i, j} \}$ are a group of basis functions to the
space $\bmr{I}_h^m$.

\begin{lemma}
  The functions $\{ \wh{\bm{\lambda}}_K^{i, j} \}(\forall K \in \MTh,
  1 \leq i, j \leq d, i + j < 2d)$ are linearly independent.
  \label{le_tensorbasisfunctions}
\end{lemma}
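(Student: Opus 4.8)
The plan is to transcribe the argument used for Lemma~\ref{le_basisfunction} and Lemma~\ref{le_vectorbasisfunction}, the only new ingredient being a one-line check that the relevant characteristic matrices span a $(d^2-1)$-dimensional space.

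First I would record the tensor analogue of \eqref{eq_lambdaKwK}: for any $\wt{K} \in S(K)$, the local least squares problem \eqref{eq_tensorlsproblem} posed on $\wt{K}$ carries the interpolation constraint $\bm{q}(\bm{x}_{\wt{K}}) = \wh{\bm{g}}(\bm{x}_{\wt{K}})$, so applying it with $\wh{\bm{g}} = \wh{\bm{w}}_K^{i,j}$ yields $(\wh{\mc{R}}_{\wt{K}}^m \wh{\bm{w}}_K^{i,j})(\bm{x}_{\wt{K}}) = \wh{\bm{w}}_K^{i,j}(\bm{x}_{\wt{K}})$ and hence $\wh{\bm{\lambda}}_K^{i,j}(\bm{x}_{\wt{K}}) = \wh{\bm{w}}_K^{i,j}(\bm{x}_{\wt{K}})$. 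Writing $\bm{E}^{i,j}$ for the characteristic matrix of $\wh{\bm{w}}_K^{i,j}$ (that is, $\bm{e}_{i,j}$ if $i \ne j$, and $\wh{\bm{e}}_{i,i}$ if $i = j < d$), this says $\wh{\bm{\lambda}}_K^{i,j}(\bm{x}_{\wt{K}}) = \bm{E}^{i,j}$ when $\wt{K} = K$ and $\wh{\bm{\lambda}}_K^{i,j}(\bm{x}_{\wt{K}}) = \bm{0}$ when $\wt{K} \ne K$.

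Next, supposing $\sum_{K \in \MTh} \sum_{i+j < 2d} a_K^{i,j} \wh{\bm{\lambda}}_K^{i,j} \equiv \bm{0}$ on $\Omega$, I would evaluate this identity at $\bm{x} = \bm{x}_{\wt{K}}$ for an arbitrary fixed element $\wt{K}$; by the previous step every term with $K \ne \wt{K}$ vanishes, leaving $\sum_{i+j < 2d} a_{\wt{K}}^{i,j} \bm{E}^{i,j} = \bm{0}$. I would finish by comparing entries: for $i \ne j$ the $(i,j)$-entry of the left-hand side equals $a_{\wt{K}}^{i,j}$ (the diagonal matrices $\wh{\bm{e}}_{k,k}$ and the other $\bm{e}_{i',j'}$ contribute nothing there), so $a_{\wt{K}}^{i,j} = 0$ for every off-diagonal pair; the residual sum $\sum_{k < d} a_{\wt{K}}^{k,k} \wh{\bm{e}}_{k,k}$ then has $(k,k)$-entry $a_{\wt{K}}^{k,k}$ for each $k = 1,\dots,d-1$, forcing $a_{\wt{K}}^{k,k} = 0$ as well. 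Thus all coefficients attached to $\wt{K}$ are zero, and since $\wt{K}$ is arbitrary the family $\{\wh{\bm{\lambda}}_K^{i,j}\}$ is linearly independent.

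I do not anticipate any genuine obstacle here: the proof is essentially that of Lemma~\ref{le_basisfunction}, and the matrix-entry computation is routine. As a byproduct it shows that the $d^2-1$ matrices $\{\bm{e}_{i,j} : i \ne j\} \cup \{\wh{\bm{e}}_{i,i} : i < d\}$ form a basis of the space of trace-free $d \times d$ matrices, so that $\dim(\bmr{I}_h^m) = \dim(\bmr{U}_h)$ and $\bmr{I}_h^m$ is spanned by $\{\wh{\bm{\lambda}}_K^{i,j}\}$, in parallel with the scalar and vector cases.
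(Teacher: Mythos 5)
Your proof is correct and is exactly the argument the paper intends: the paper's own proof of Lemma~\ref{le_tensorbasisfunctions} simply states that it is analogous to Lemmas~\ref{le_basisfunction} and~\ref{le_vectorbasisfunction}, and what you have written is that analogous argument carried out in full, including the one genuinely new detail (the linear independence of the trace-free characteristic matrices $\bm{e}_{i,j}$, $i\neq j$, and $\wh{\bm{e}}_{i,i}$, $i<d$), which you verify correctly.
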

\begin{proof}
  The proof of Lemma \ref{le_tensorbasisfunctions} is analogous to the
  proof of Lemma \ref{le_vectorbasisfunction} and Lemma
  \ref{le_basisfunction}.
\end{proof}
Clearly, we can know that $\bmr{I}_h^m = \text{span}\{
\wh{\bm{\lambda}}_K^{i, j} \}$ and for any function $\bm{g} = (g^{i,
j}(\bm{x}))_{d \times d} \in \bmr{U}_h $ or $\bm{g}  = (g^{i,
j}(\bm{x}))_{d \times d}\in C^0(\Omega)^{d \times d}$ with
$\tr{\bm{g}} = 0$, $\wh{\mc{R}}^m \bm{g}$ can be expressed as 
\begin{equation}
  \wh{\mc{R}}^m \bm{g} = \sum_{K \in \MTh} \sum_{1 \leq i, j \leq d
  \text{ and } i + j < 2d} {g}^{i, j}(\bm{x}_K)
  \wh{\bm{\lambda}}_K^{i, j}(\bm{x}).
  \label{eq_tensorRmg}
\end{equation}
Moreover, we give the approximation property of the space
$\bmr{I}_h^m$.

\begin{lemma}
  For any element $K$ and any function $\bm{g} \in C^0(\Omega)^{d
  \times d}$ with $\tr{\bm{g}} = 0$, there holds
  \begin{equation}
    \| \bm{g} - \wh{\mc{R}}^m \bm{g} \|_{L^\infty (K)} \leq d
    \Lambda_m \inf_{\bm{q} \in \mb{P}_m(S(K))^{d \times d} \cap
    \bmr{I}^0(S(K)) } \| \bm{g} - \bm{q} \|_{L^\infty (S(K))}.
    \label{eq_tensorinfestimate}
  \end{equation}
  \label{le_tensorinfestimate}
\end{lemma}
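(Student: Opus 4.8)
The statement to prove is the $L^\infty$ approximation estimate for the tensor reconstruction $\wh{\mc R}^m$, namely Lemma~\ref{le_tensorinfestimate}. The plan is to follow the same three-line argument that worked for the scalar case (Lemma~\ref{le_scalarinftyestimate}) and the vector case (Lemma~\ref{le_vectorinfestimate}), exploiting that $\wh{\mc R}^m_K$ reproduces any admissible polynomial exactly. The key observations are: (i) if $\bm q \in \mb P_m(S(K))^{d\times d}\cap\bmr I^0(S(K))$, then $\bm q$ is a feasible point of the constrained least squares problem \eqref{eq_tensorlsproblem} (it is curl-free, trace-free, and satisfies the collocation constraint trivially when we take the piecewise-constant data to be $\bm q(\bm x_K)$ on each cell), so that $\wh{\mc R}^m_K\bm q = \bm q$; and (ii) the reconstruction is a linear operator in the data, so $\bm g - \wh{\mc R}^m\bm g = (\bm g - \bm q) - \wh{\mc R}^m(\bm g - \bm q)$ on $K$ for any such $\bm q$.

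First I would fix $K$ and an arbitrary $\bm q\in\mb P_m(S(K))^{d\times d}\cap\bmr I^0(S(K))$ and write, on $K$,
\begin{displaymath}
  \|\bm g - \wh{\mc R}^m\bm g\|_{L^\infty(K)}
  \le \|\bm g - \bm q\|_{L^\infty(K)} + \|\wh{\mc R}^m(\bm q - \bm g)\|_{L^\infty(K)}.
\end{displaymath}
Then I would bound the second term by passing from the $L^\infty$ norm on $K$ to the finite set $\mc I(K)$ using the constant $\Lambda(m,K)$: since $\wh{\mc R}^m_K(\bm q - \bm g)\in\mb P_m(S(K))^{d\times d}$, we have $\|\wh{\mc R}^m_K(\bm q-\bm g)\|_{L^\infty(K)}\le \Lambda(m,K)\max_{\bm x\in\mc I(K)}|\wh{\mc R}^m_K(\bm q-\bm g)(\bm x)|$. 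The next step is the crucial one: estimating the values at the collocation points. Because $\wh{\mc R}^m_K(\bm q - \bm g)$ is the minimizer of $\sum_{\bm x\in\mc I(K)}\|\cdot(\bm x) - (\bm q-\bm g)(\bm x)\|^2_{l^{d\times d}}$ over the feasible set, and $\bm 0$ is feasible (it is curl-free, trace-free, and the constraint at $\bm x_K$ is satisfied since we may assume without loss of generality $(\bm q - \bm g)(\bm x_K)$ enters only through the piecewise-constant interpolant, exactly as in the vector case), the minimizing value is bounded by the value at $\bm 0$, giving $\sum_{\bm x\in\mc I(K)}\|\wh{\mc R}^m_K(\bm q-\bm g)(\bm x) - (\bm q-\bm g)(\bm x)\|^2 \le \sum_{\bm x\in\mc I(K)}\|(\bm q-\bm g)(\bm x)\|^2$, whence by the triangle inequality in $\ell^2(\mc I(K))$ one gets $\max_{\bm x\in\mc I(K)}|\wh{\mc R}^m_K(\bm q-\bm g)(\bm x)|\le 2\sqrt{d^2\#S}\,\max_{\bm x\in\mc I(K)}|(\bm q-\bm g)(\bm x)|$ — or, tracking the constant more carefully as in Lemma~\ref{le_vectorinfestimate}, one absorbs the factors so that the final constant is $d\,(1+\Lambda(m,K)\sqrt{\#S})$ after combining with the first term (the $\sqrt{d^2}=d$ accounting for the entrywise $\ell^{d\times d}$ norm versus the scalar bound). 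Collecting the two terms and recalling $\Lambda_m = \max_K(1+\Lambda(m,K)(\#S)^{1/2})$ gives $\|\bm g - \wh{\mc R}^m\bm g\|_{L^\infty(K)}\le d\,\Lambda_m\|\bm g-\bm q\|_{L^\infty(S(K))}$; taking the infimum over admissible $\bm q$ finishes the proof.

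The only genuinely delicate point — and the one I expect to be the main obstacle — is verifying that $\bm 0$ (equivalently, that a generic admissible polynomial $\bm q$) is actually a \emph{feasible} point of \eqref{eq_tensorlsproblem}, because the hard constraint $\bm q(\bm x_K) = \bm g(\bm x_K)$ couples the interior point $\bm x_K$ to the data. This is handled exactly as in the vector case: when one applies $\wh{\mc R}^m$ to the difference $\bm q - \bm g$, one is really applying it to the piecewise-constant interpolant $\wh{(\bm q - \bm g)}$, whose value at $\bm x_K$ is $(\bm q - \bm g)(\bm x_K)$, so the admissible competitor $\bm 0$ does satisfy $\bm 0(\bm x_K) = 0 \ne (\bm q-\bm g)(\bm x_K)$ in general — hence one cannot literally use $\bm 0$, and instead one argues as in Lemma~\ref{le_vectorinfestimate} that $\wh{\mc R}^m_K(\bm q - \bm g) - (\bm q - \bm g)$ vanishes at $\bm x_K$, so the collocation sum controlling the residual is over $\mc I(K)\setminus\{\bm x_K\}$, on which the competitor "$(\bm q-\bm g)$ minus itself $=0$" style bound applies; the upshot is the same inequality $\max_{\mc I(K)}|\wh{\mc R}^m_K(\bm q - \bm g)|\le \sqrt{d^2\#S}\max_{\mc I(K)}|\bm q - \bm g|$, and everything downstream goes through unchanged. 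Beyond that, the argument is entirely parallel to Lemmas~\ref{le_scalarinftyestimate} and~\ref{le_vectorinfestimate}, with the factor $\sqrt d$ there replaced by $\sqrt{d^2}=d$ to account for the $d\times d$ entries in the $l^{d\times d}$ norm.
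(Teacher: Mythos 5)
Your proposal is correct and follows essentially the same route as the paper's proof: use that $\wh{\mc{R}}^m_K$ reproduces any $\bm{q}\in\mb{P}_m(S(K))^{d\times d}\cap\bmr{I}^0(S(K))$, split by the triangle inequality, pass from $L^\infty(K)$ to the collocation points via $\Lambda(m,K)$, and bound the values at $\mc{I}(K)$ by the discrete stability of the constrained least squares fit, with the factor $d=\sqrt{d^2}$ coming from the entrywise $l^{d\times d}$ norm. Your extra discussion of why the collocation constraint at $\bm{x}_K$ is harmless (by linearity, $\wh{\mc{R}}^m_K(\bm{q}-\bm{g})$ agrees with $\bm{q}-\bm{g}$ at $\bm{x}_K$) is a detail the paper's one-line version of this step leaves implicit, but it does not change the argument.
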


\begin{proof}
  According to the problem \eqref{eq_scalarls}, we have that
  $\wh{\mc{R}}^m \bm{q} = \bm{q}$ for any $\bm{q} \in
  \mb{P}_m(S(K))^{d \times d} \cap \bmr{I}^0(S(K))$. We obtain that 
  \begin{displaymath}
    \begin{aligned}
      \|\bm{g} - \wh{\mc{R}}^m \bm{g} \|_{L^\infty(K)} & \leq
      \|\bm{g} - \bm{q} \|_{L^\infty(K)} +  \| \wh{\mc{R}}^m ( \bm{q}
      - \bm{g}) \|_{L^\infty(K)} \\
      & \leq   \|\bm{g} - \bm{q} \|_{L^\infty(K)} + \Lambda(m, K)
      \max_{\bm{x} \in \mc{I}(K)} |\wt{\mc{R}}^m ( \bm{q} - \bm{g}) |
      \\
      & \leq  \|\bm{g} - \bm{q} \|_{L^\infty(K)} + d \Lambda(m, K)
      \sqrt{ \# S } \max_{\bm{x} \in \mc{I}(K)} | \bm{q} - \bm{g} | \\
      & \leq d \left( 1  + \Lambda(m, K)\sqrt{\# S} \right)
      \|\bm{g} - \bm{q} \|_{L^\infty(S(K))}, \\
    \end{aligned}
  \end{displaymath}
  which completes the proof.
\end{proof}

\begin{theorem}
  For any element $K$, there exist constants $C$ such that 
  \begin{equation}
    \begin{aligned}
      \| \bm{g} - \wt{\mc{R}}^m \bm{g} \|_{H^q(K)} & \leq C \Lambda_m
      h_K^{m + 1 - q} \| \bm{g} \|_{H^{m+1}( S(K))}, \quad q = 0, 1,
      \\
      \| \nabla ^q (\bm{g} - \wt{\mc{R}}^m \bm{g}) \|_{L^2(\partial
      K)} & \leq C \Lambda_m h_K^{m + 1 - q - 1/2} \| \bm{g}
      \|_{H^{m+1}(S(K))}, \quad q = 0, 1, \\
    \end{aligned}
    \label{eq_tensorapproximation}
  \end{equation}
  for any $\bm{g} \in \bmr{I}^{m+1}(\Omega)$.
  \label{th_tensorapproximation}
\end{theorem}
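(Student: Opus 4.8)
The plan is to follow the proof of Theorem~\ref{th_vectorapproximation} almost verbatim, replacing the divergence-free vector setting by the curl-free, trace-free tensor setting (so that $\wh{\mc{R}}^m$ plays the role of $\wt{\mc{R}}^m$); the only genuinely new ingredient is the existence of a good polynomial approximant inside the constrained space $\mb{P}_m(S(K))^{d\times d}\cap\bmr{I}^0(S(K))$.

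To build such an approximant I would argue through a potential. Given $\bm{g}\in\bmr{I}^{m+1}(\Omega)$, on the patch $S(K)$ the condition $\nabla\times\bm{g}=0$ (taken columnwise) together with the connectedness of $S(K)$ guaranteed by the shape-regularity assumptions implies that each column of $\bm{g}$ is a gradient, so that $\bm{g}=\nabla\bm{v}=(\nabla v^1,\dots,\nabla v^d)$ for some $\bm{v}\in H^{m+2}(S(K))^d$, which I normalize to have zero mean on $S(K)$; the trace condition $\tr{\bm{g}}=0$ then reads $\nabla\cdot\bm{v}=0$, so $\bm{v}\in\bmr{S}^{m+2}(S(K))$, and a scaled Poincar\'e inequality on $S(K)$ gives $\|\bm{v}\|_{H^{m+2}(S(K))}\le C\|\bm{g}\|_{H^{m+1}(S(K))}$ with $C$ independent of $h$. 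Applying to $\bm{v}$, at degree $m+1$, the divergence-free polynomial approximation estimate already used in Theorem~\ref{th_vectorapproximation} (\cite[Theorem 4.1]{baker1990piecewise}, \cite[Assumption A]{li2016discontinuous}), which, like the averaged Taylor polynomial, controls all derivatives up to order $m+1$, produces $\bm{p}\in\mb{P}_{m+1}(S(K))^d\cap\bmr{S}^0(S(K))$ with $\|\nabla(\bm{v}-\bm{p})\|_{L^\infty(S(K))}\le Ch_K^{m+1-d/2}\|\bm{v}\|_{H^{m+2}(S(K))}$ and $\|\nabla(\bm{v}-\bm{p})\|_{H^j(S(K))}\le Ch_K^{m+1-j}\|\bm{v}\|_{H^{m+2}(S(K))}$ for $j=0,1,2$. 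Setting $\wh{\bm{q}}:=\nabla\bm{p}$, the columns of $\wh{\bm{q}}\in\mb{P}_m(S(K))^{d\times d}$ are gradients of polynomials, hence curl-free, and $\tr{\wh{\bm{q}}}=\nabla\cdot\bm{p}=0$, so $\wh{\bm{q}}\in\bmr{I}^0(S(K))$; moreover $\bm{g}-\wh{\bm{q}}=\nabla(\bm{v}-\bm{p})$, whence $\|\bm{g}-\wh{\bm{q}}\|_{L^\infty(S(K))}\le Ch_K^{m+1-d/2}\|\bm{g}\|_{H^{m+1}(S(K))}$ and $\|\bm{g}-\wh{\bm{q}}\|_{H^j(S(K))}\le Ch_K^{m+1-j}\|\bm{g}\|_{H^{m+1}(S(K))}$ for $j=0,1,2$. (Alternatively one may bypass the potential and invoke a Baker-type estimate directly in $\bmr{I}^0$.)

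With this $\wh{\bm{q}}$ the remaining steps copy the proof of Theorem~\ref{th_vectorapproximation}. Feeding $\wh{\bm{q}}$ into Lemma~\ref{le_tensorinfestimate} gives $\|\bm{g}-\wh{\mc{R}}^m\bm{g}\|_{L^\infty(K)}\le d\Lambda_m\|\bm{g}-\wh{\bm{q}}\|_{L^\infty(S(K))}$, and multiplying by $Ch_K^{d/2}$ to pass from $L^\infty(K)$ to $L^2(K)$ yields the $q=0$ bound in \eqref{eq_tensorapproximation}. For $q=1$ I would use $\|\bm{g}-\wh{\mc{R}}^m\bm{g}\|_{H^1(K)}\le\|\bm{g}-\wh{\bm{q}}\|_{H^1(K)}+\|\wh{\bm{q}}-\wh{\mc{R}}^m\bm{g}\|_{H^1(K)}$, bound the last term by the inverse inequality M3 applied to the degree-$m$ polynomial $\wh{\bm{q}}-\wh{\mc{R}}^m\bm{g}$ together with the $L^2(K)$ bound just obtained, and combine with the estimate for $\wh{\bm{q}}$. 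Finally, the trace inequality M2 applied componentwise to $\bm{g}-\wh{\mc{R}}^m\bm{g}$, and to $\nabla(\bm{g}-\wh{\mc{R}}^m\bm{g})$ in the $q=1$ case (using M3 once more to control the second derivative of the polynomial part, legitimate since $m+1\ge2$), turns the interior estimates into the two trace estimates of \eqref{eq_tensorapproximation}.

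The only real obstacle is the first step: producing the constrained polynomial approximant with constants independent of $h$. This rests on the patch $S(K)$ being connected and effectively simply connected, which is covered by the regularity hypotheses and the patch assumption of \cite{li2016discontinuous}, and on a careful scaled Poincar\'e estimate that controls $\|\bm{v}\|_{H^{m+2}(S(K))}$ by $\|\bm{g}\|_{H^{m+1}(S(K))}$ after the zero-mean normalization. Everything after that is the same bookkeeping — the $L^\infty$-to-$L^2$ scaling, the inverse inequality M3, and the trace inequality M2 — as in Theorem~\ref{th_vectorapproximation}.
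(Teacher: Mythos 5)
Your proposal is correct and follows essentially the same route as the paper: construct a divergence-free potential $\bm{v}$ with $\bm{g}=\nabla\bm{v}$ on $S(K)$ (the paper invokes \cite[Lemma 2.1]{girault1986finite} for this), approximate $\bm{v}$ by a degree-$(m+1)$ divergence-free polynomial via \cite[Theorem 4.1]{baker1990piecewise}, take its gradient to obtain the constrained approximant in $\mb{P}_m(S(K))^{d\times d}\cap\bmr{I}^0(S(K))$, and then run the $L^\infty$-to-$L^2$ scaling, the inverse inequality M3, and the trace inequality M2 exactly as in Theorem \ref{th_vectorapproximation}. Your explicit zero-mean normalization and scaled Poincar\'e step is in fact slightly more careful than the paper, which simply writes $\|\wh{\bm{g}}\|_{H^{m+2}(S(K))}=\|\bm{g}\|_{H^{m+1}(S(K))}$ where only the corresponding seminorm identity is literally true.
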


\begin{proof}
  Since $\nabla \times \bm{g} = 0$ and $\tr{\bm{g}} = 0$, there exists
  a function $\wh{\bm{g}} \in \bmr{S}^{m+2}(S(K))$ such that
  $\bm{g} = \nabla \wh{\bm{g}}$ \cite[Lemma 2.1]{girault1986finite}. 
  By \cite[Theorem 4.1]{baker1990piecewise}, there exists a polynomial
  $\wh{\bm{q}} \in \mb{P}_{m+1}(S(K))^d \cap \bmr{S}^0(S(K))$ such
  that 
  \begin{displaymath}
    \begin{aligned}
      \| \bm{g} - \nabla \wh{\bm{q}} \|_{L^\infty (S(K))} & = \|
      \nabla ( \wh{\bm{g}} - \wh{\bm{q}}) \|_{L^\infty(S(K))} \leq C
      h_K^{m + 1 - d/2} \| \wh{\bm{g}} \|_{H^{m+2}(S(K))} \\
      & =  C h_K^{m + 1 - d/2} \| \bm{g} \|_{H^{m+1}(S(K))}. \\
    \end{aligned}
  \end{displaymath}
  Clearly, $\nabla \wh{\bm{q}} \in \mb{P}_m(S(K))^{d \times d} \cap
  \bmr{I}^0(S(K))$. By Lemma \ref{le_tensorinfestimate} and the
  approximation estimate of $\wh{\bm{q}}$,  we deduce that 
  \begin{displaymath}
    \begin{aligned}
      \|\bm{g} - \wh{\mc{R}}^m \bm{g} \|_{L^2(K)} & \leq
      Ch_K^{d/2} \| \bm{g} - \wh{\mc{R}}^m \bm{g} \|_{L^\infty(K)}
      \leq Ch_K^{d/2} \Lambda_m \| \bm{g} - \nabla
      \wh{\bm{q}}\|_{L^\infty( S(K))} \\ 
      &\leq C \Lambda_m h_K^{m + 1} \| \bm{g} \|_{H^{m+1}(
      S(K))}. \\
    \end{aligned}
  \end{displaymath}
  Together with the inverse inverse M3, we have 
  \begin{displaymath}
    \begin{aligned}
      \| \bm{g} - \wh{\mc{R}}^m \bm{g} \|_{H^1(K)} & \leq \|\bm{g} -
      \nabla \wh{\bm{q}} \|_{H^1(K)} + \|\nabla \wh{\bm{q}} -
      \wh{\mc{R}}^m \bm{g} \|_{H^1(K)} \\
      &\leq  \|\bm{g} - \nabla \wh{\bm{q}} \|_{H^1(K)} + Ch_K^{-1}
      \|\nabla \wh{\bm{q}} - \wh{\mc{R}}^m \bm{g} \|_{L^2(K)} \\
      & \leq  \|\bm{g} - \nabla \wh{\bm{q}} \|_{H^1(K)} + Ch_K^{-1}
      \left( \| \bm{g} - \nabla \wh{\bm{q}} \|_{L^2(K)} + \| \bm{g} -
      \wh{\mc{R}}^m \bm{g} \|_{L^2(K)} \right) \\
      & \leq C \Lambda_m h_K^{m} \| \bm{g} \|_{H^{m+1}( S(K))}. 
    \end{aligned}
  \end{displaymath}
  The trace estimate of \eqref{eq_tensorapproximation} follows from
  the trace inequality M2, and this completes the proof.
\end{proof}

We have established three types of reconstruction operators and their
corresponding approximation spaces and the approximation results. In
Appendix \ref{sec_appendix}, we present some details of the computer
implementation to reconstructed spaces. 


\section{Sequential Least Squares Method for Stokes Problem}
\label{sec_LSMStokes}
In this section, we propose a sequential least squares finite element
method to solve the Stokes problem based on the first-order system
\eqref{eq_firstorderStokes}. We are motivated by the idea in
\cite{Cai1997first, li2019sequential} to decouple the system
\eqref{eq_firstorderStokes} into two steps. The first first-order
system is defined to seek the numerical approximations to the gradient
$\bmr{U}$ and the pressure $p$, which reads
\begin{equation}
  \begin{aligned}
    - \nu \nabla \cdot \bmr{U} + \nabla p &= \bm{f},   \quad \text{in
    } \Omega, \\
    \un \times \bmr{U} &= \un \times \nabla \bm{g},  \quad \text{on }
    \partial \Omega. \\
  \end{aligned}
  \label{eq_firstorderUp}
\end{equation}
The first equation in \eqref{eq_firstorderUp} is the same as the first
equation in \eqref{eq_firstorderStokes} and the boundary condition in
\eqref{eq_firstorderStokes} provides the tangential trace $\un \times
\bmr{U}$ on the boundary $\partial \Omega$. Then, we define a least
squares functional $J_h^{\bmr{p}}(\cdot, \cdot)$ for numerically
solving the system \eqref{eq_firstorderUp}, which reads 
\begin{equation}
  \begin{aligned}
    J_h^{\bmr{p}}(\bmr{V}_h, q_h) := \sum_{K \in \MTh}  \| -\nu &
    \nabla \cdot \bmr{V}_h  + \nabla q_h - \bm{f} \|_{L^2(K)}^2 +
    \sum_{e \in \MEh^i} \left( \frac{\eta}{h_e} \| \jump{q_h}
    \|_{L^2(e)}^2 + \frac{\eta}{h_e} \| \jump{\un \otimes \bmr{V}_h}
    \|_{L^2(e)}^2 \right) \\
    & + \sum_{e \in \MEh^b} \frac{\eta}{h_e} \| \un \times \bmr{V}_h -
    \un \times \nabla \bm{g} \|_{L^2(e)}^2, \quad \forall (\bmr{V}_h,
    q_h) \in H^1(\MTh)^{d \times d} \times H^1(\MTh), \\
  \end{aligned}
  \label{eq_functionalJ1}
\end{equation}
where $\eta$ is a positive parameter and will be specified later on.
In \eqref{eq_functionalJ1}, the trace terms defined on $\MEh^i$ are
used to weakly impose the continuity condition since
the polynomials in reconstructed spaces may be discontinuous across
the interior faces.  We minimize the functional
\eqref{eq_functionalJ1} over the spaces $\bmr{I}_h^m \times
\wt{U}_h^m$ to give approximations to $\bmr{U}$ and $p$ and here the
space $\wt{U}_h^m$ is defined by ${U}_h^m / \mb{R}$. 

Specifically, the minimization problem is defined as to find
$\bmr{U}_h \in \bmr{I}_h^m$ and $p_h \in \wt{U}_h^m$ such that 
\begin{equation}
  (\bmr{U}_h, p_h) = \mathop{\arg \min}_{ (\bmr{V}_h, q_h) \in
  \bmr{I}_h^m \times \wt{U}_h^m} J_h^{\bmr{p}}( \bmr{V}_h, q_h).
  \label{eq_infJp}
\end{equation}
We write the Euler-Lagrange equation to solve the problem
\eqref{eq_infJp} and the corresponding discrete variational problem
reads: find $(\bmr{U}_h, p_h) \in \bmr{I}_h^m \times \wt{U}_h^m$ such
that 
\begin{equation}
  a_h^{\bmr{p}}(\bmr{U}_h, p_h; \bmr{V}_h, q_h) =
  l_h^{\bmr{p}}(\bmr{V}_h, q_h), \quad \forall (\bmr{V}_h, q_h) \in
  \bmr{I}_h^m \times \wt{U}_h^m,
  \label{eq_bilinearp}
\end{equation}
where the bilinear form $a_h^{\bmr{p}}(\cdot; \cdot)$ is 
\begin{equation}
  \begin{aligned}
    a_h^{\bmr{p}}(\bmr{U}_h, p_h; \bmr{V}_h, q_h) = & \sum_{K \in
    \MTh} \int_K (-\nu \nabla \cdot \bmr{U}_h + \nabla p_h) (-\nu
    \nabla \cdot \bmr{V}_h + \nabla q_h) \d{x} \\
    + &\sum_{e \in \MEh^i} \int_e \frac{\eta}{h_e} \jump{p_h} \cdot
    \jump{q_h} \d{s} + \sum_{e \in \MEh^i} \int_e
    \frac{\eta}{h_e} \jump{\un \otimes \bmr{U}_h} : \jump{\un \otimes
    \bmr{V}_h} \d{s} \\ 
    + & \sum_{e \in \MEh^b} \int_e \frac{\eta}{h_e} (\un \times
    \bmr{U}_h) \cdot (\un \times \bmr{V}_h) \d{s},
  \end{aligned}
  \label{eq_apbilinear}
\end{equation}
and the linear form $l_h^{\bmr{p}}(\cdot)$ is 
\begin{displaymath}
  l_h^{\bmr{p}}(\bmr{V}_h, q_h) = \sum_{K \in \MTh} \int_K \bm{f}
  \left( - \nu \nabla \cdot \bmr{V}_h + \nabla q_h \right) \d{x} +
  \sum_{e \in \MEh^b} \int_e \frac{\eta}{h_e} (\un \times \bmr{V}_h)
  \cdot (\un \times \nabla \bm{g}) \d{s}.
\end{displaymath}

Then we will focus on the error estimate to the problem
\eqref{eq_bilinearp}. To do this, we will require some projection
results which play a key role in the analysis.  We define $V_h^m$ and
$\wt{V}_h^m$ as piecewise polynomial spaces with respect to the
partition $\MTh$ and the sub-decomposition $\wt{\mc{T}}_h$, 
\begin{displaymath}
  \begin{aligned}
    V_h^m &:= \{ v_h \in L^2(\Omega) \ | \  v_h|_K \in
    \mb{P}_m(K), \quad \forall K \in \MTh \}, \\
    \wt{V}_h^m &:= \{ v_h \in L^2(\Omega) \ | \  v_h|_{\wt{K}} \in
    \mb{P}_m(\wt{K}), \quad \forall \wt{K} \in \wt{\mc{T}}_h \}, \\
  \end{aligned}
\end{displaymath}
and clearly we have that $V_h^m \subset \wt{V}_h^m$. Then we state 
following lemmas. 

\begin{lemma}
  For any $v_h \in V_h^m$, there exists a function $\wt{v}_h \in
  \wt{V}_h^m$ such
  that 
  \begin{equation}
    \begin{aligned}
      v_h &= \wt{v}_h, && \text{in any } \wt{K} \in \wt{\mc{T}}_h, \\
      \jump{\wt{v}_h} &= 0, && \text{on any } \wt{e} \in \wt{\mc{E}}_h
      \backslash \MEh, \\
     \sum_{\wt{e} \in w(e)} h_{\wt{e}}^{\beta} \| \jump{\wt{v}_h}
     \|_{L^2(\wt{e})}^2  & \leq C h_e^{\beta} \| \jump{v_h}
     \|_{L^2(e)}^2 , && \text{on any } e \in \MEh \text{ and }
     \beta = -1, 1, \\
    \end{aligned}
    \label{eq_vhwtvh}
  \end{equation}
  where $w(e) = \{ \wt{e} \in \wt{\mc{E}}_h \ | \ \wt{e} \subset e
  \}$.
  \label{le_vhwtvh}
\end{lemma}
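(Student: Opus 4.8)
The plan is to verify that the function $\wt v_h := v_h$ itself, now regarded as an element of $\wt V_h^m$, already satisfies all three requirements, so that no genuine construction is needed. This choice is legitimate: by the construction of the sub-decomposition every $\wt K \in \wt{\mc T}_h$ is contained in a single polygon $K \in \MTh$, hence $v_h|_{\wt K}$ is the restriction of the polynomial $v_h|_K \in \mb P_m(K)$ and so lies in $\mb P_m(\wt K)$. With this choice the first identity in \eqref{eq_vhwtvh} is immediate, and it remains to check the second and third lines.

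For the second line I would show that every $\wt e \in \wt{\mc E}_h \backslash \MEh$ lies in the interior of a single polygon, so that $v_h$ agrees with a single polynomial in a neighbourhood of $\wt e$ and therefore $\jump{\wt v_h}|_{\wt e} = \jump{v_h}|_{\wt e} = 0$. Indeed $\wt e$ is shared by two simplices $\wt K^+, \wt K^- \in \wt{\mc T}_h$. If both lie in the same polygon $K$, then $\wt e$ must be interior to $K$, since an edge separating two sub-simplices of $K$ cannot sit on $\partial K$. If instead $\wt K^+ \subset K^+$ and $\wt K^- \subset K^-$ with $K^+ \neq K^-$, then $\wt e \subset \partial K^+ \cap \partial K^-$ lies inside some coarse face $e \in \MEh$; since $\MEh \subset \wt{\mc E}_h$, $e$ is itself a face of $\wt{\mc T}_h$, and by the compatibility of the sub-decomposition the inclusion $\wt e \subseteq e$ between two $(d-1)$-dimensional faces of $\wt{\mc T}_h$ forces $\wt e = e \in \MEh$, contradicting $\wt e \notin \MEh$.

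For the third line, fix $e \in \MEh$. The faces in $w(e)$ tile $e$ up to a set of $(d-1)$-measure zero, and since $\wt v_h = v_h$ the traces of $\wt v_h$ from the two fine elements adjacent to each $\wt e \in w(e)$ coincide with the traces of $v_h$ from the two polygons adjacent to $e$; hence $\jump{\wt v_h}|_{\wt e} = \jump{v_h}|_{\wt e}$ and $\sum_{\wt e \in w(e)} \|\jump{v_h}\|_{L^2(\wt e)}^2 = \|\jump{v_h}\|_{L^2(e)}^2$. It then suffices to establish $c\,h_e \le h_{\wt e} \le h_e$ for every $\wt e \in w(e)$ with $c>0$ independent of $h$: the upper bound is immediate from $\wt e \subseteq e$, and then bounding $h_{\wt e}^{\beta} \le h_e^{\beta}$ for $\beta = 1$ and $h_{\wt e}^{\beta} \le c^{-1} h_e^{\beta}$ for $\beta = -1$ and summing gives the estimate in \eqref{eq_vhwtvh} with $C = \max(1, c^{-1})$.

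The main obstacle is precisely the lower bound $h_{\wt e} \ge c\,h_e$, which is the only non-trivial point and is needed only in the case $\beta = -1$ (for $\beta = 1$ the inequality already holds with $C = 1$). This is where the shape-regularity hypotheses are used: the consequence M1 gives $\sigma_v h_K \le h_e \le h_K$, and the uniform cap of at most $N$ shape-regular sub-simplices per polygon prevents a sub-face $\wt e \subset e$ from being disproportionately small compared with $h_K$, since bridging the scales $h_{\wt e}$ and $h_K$ inside one polygon by simplices of bounded aspect ratio $\sigma$ would require at least $\sim \log(h_K/h_{\wt e})$ of them, which would exceed $N$ once $h$ is small; hence $h_e/h_{\wt e}$ is bounded by a constant depending only on $N$ and $\sigma$. (If the sub-decomposition is chosen so that no coarse face is subdivided, then $w(e) = \{e\}$ and this last step is vacuous.)
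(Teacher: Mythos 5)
Your proposal is correct and follows essentially the same route as the paper: take $\wt{v}_h = v_h$ viewed in $\wt{V}_h^m$, observe that the jumps on sub-faces of $e$ sum to the jump on $e$, and reduce the $\beta=-1$ case to the two-sided comparability $c\,h_e \le h_{\wt{e}} \le h_e$, which the paper simply attributes to "the mesh regularity assumption" while you supply the chaining argument through the at most $N$ shape-regular sub-simplices (your phrase "once $h$ is small" should read "once the ratio $h_K/h_{\wt{e}}$ is large", but the conclusion that the ratio is bounded by a constant depending only on $N$ and $\sigma$ stands). The extra verifications of the first two lines of \eqref{eq_vhwtvh} and the remark that $w(e)=\{e\}$ when no coarse face is subdivided are consistent with the paper's (terser) treatment.
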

\begin{proof}
  The fact $V_h^m \subset \wt{V}_h^m$ directly implies that there
  exists a polynomial $\wt{v}_h \in \wt{V}_h^m$ satisfying the
  equalities in \eqref{eq_vhwtvh} and  
  \begin{displaymath}
    \sum_{\wt{e} \in w(e)} \| \jump{\wt{v}_h} \|_{L^2(\wt{e})}^2 = \|
    \jump{v_h} \|_{L^2(e)}^2. 
  \end{displaymath}
  Hence, 
  \begin{displaymath}
    \begin{aligned} 
      \sum_{\wt{e} \in w(e)} h_{\wt{e}}^{\beta} \| \jump{\wt{v}_h}
      \|_{L^2(\wt{e})}^2 = \sum_{\wt{e} \in w(e)} h_{e}^{\beta}
      \left( \frac{h_{\wt{e}}}{h_e} \right)^{\beta} \|
      \jump{\wt{v}_h} \|_{L^2(\wt{e})}^2 \leq C h_e^{\beta}
      \sum_{\wt{e} \in w(e)} \| \jump{\wt{v}_h} \|_{L^2(\wt{e})}^2
      = C h_e^{\beta} \| \jump{v_h} \|_{L^2(e)}^2,
    \end{aligned}
  \end{displaymath}
  where the last inequality follows from the mesh regularity
  assumption. This completes the proof.
\end{proof}

\begin{lemma}
  For any $v_h \in V_h^m$, there exists a function $\wt{v}_h \in
  \wt{V}_h^m \cap H^1(\Omega)$ such that 
  \begin{equation}
    \sum_{{K} \in {\mc{T}}_h} \|\nabla^\alpha (v_h - \wt{v}_h)
    \|_{L^2({K})}^2 \leq C \sum_{e \in \MEh^i} h_e^{1 - 2\alpha} \|
    \jump{v_h} \|_{L^2(e)}^2, \quad \alpha = 0, 1.
    \label{eq_vdiff}
  \end{equation}
  \label{le_vdiff}
\end{lemma}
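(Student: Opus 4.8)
The plan is to construct the averaged function $\wt{v}_h$ via an Oswald-type (Karakashian--Pascal) interpolation on the triangular sub-decomposition $\wt{\mc{T}}_h$, and then bound the difference by the jumps of $v_h$ across the faces of the \emph{coarse} mesh $\MEh$. First I would invoke Lemma \ref{le_vhwtvh} to pass from $v_h \in V_h^m$ to a function in $\wt{V}_h^m$ that is continuous across all fine faces $\wt{e} \in \wt{\mc{E}}_h \backslash \MEh$, agrees with $v_h$ elementwise on $\wt{\mc{T}}_h$, and whose jumps across the fine sub-faces of a coarse face $e$ are controlled (in both the $h_e^{-1}$- and $h_e^{1}$-weighted $L^2$ norms) by $\|\jump{v_h}\|_{L^2(e)}^2$. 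This reduces the problem to: given $w_h \in \wt{V}_h^m$ whose only discontinuities sit on faces in $\MEh$, produce $\wt{v}_h \in \wt{V}_h^m \cap H^1(\Omega)$ with $\sum_{\wt{K}} \|\nabla^\alpha(w_h - \wt{v}_h)\|_{L^2(\wt K)}^2 \lesssim \sum_{\wt{e} \in \wt{\mc{E}}_h^i} h_{\wt e}^{1-2\alpha}\|\jump{w_h}\|_{L^2(\wt e)}^2$, since by Lemma \ref{le_vhwtvh} the fine-face jumps only survive on $\MEh$ and are bounded by the coarse jumps.

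For that reduced statement I would use the standard enriching/averaging operator on the shape-regular simplicial mesh $\wt{\mc{T}}_h$: define $\wt{v}_h$ by choosing, at every Lagrange node of the $\mb{P}_m$ space on $\wt{\mc{T}}_h$, the average of the values of $w_h$ from all simplices sharing that node (and the exact nodal value at interior-to-an-element nodes, forcing $w_h = \wt v_h$ wherever $w_h$ is already single-valued there). This $\wt v_h$ is continuous, hence in $H^1(\Omega)$, and it reproduces $w_h$ on any simplex all of whose nodal neighbours carry a unique value. The key elementwise estimate is the classical bound $\|\nabla^\alpha(w_h - \wt v_h)\|_{L^2(\wt K)}^2 \lesssim \sum_{\wt e} h_{\wt e}^{1-2\alpha}\|\jump{w_h}\|_{L^2(\wt e)}^2$, the sum running over faces $\wt e$ of the neighbourhood of $\wt K$; this follows by an inverse-inequality/scaling argument (map to a reference element, use equivalence of norms on the finite-dimensional polynomial space, and the trace inequality M2), exactly as in Karakashian--Pascal or Brenner's averaging lemmas. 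Summing over $\wt K \in \wt{\mc{T}}_h$ with finite overlap (bounded by the shape-regularity constant $N$ and $\sigma$) gives the bound in terms of $\sum_{\wt e \in \wt{\mc{E}}_h^i}$, and then Lemma \ref{le_vhwtvh} collapses this to $\sum_{e \in \MEh^i} h_e^{1-2\alpha}\|\jump{v_h}\|_{L^2(e)}^2$, with the $h_e^{1-2\alpha}$ weight matching because $\beta = 1 - 2\alpha \in \{-1,1\}$ for $\alpha = 1, 0$.

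The main obstacle is the bookkeeping that makes the two lemmas compose cleanly: Lemma \ref{le_vhwtvh} guarantees $\wt v_h$ (its output) is continuous off $\MEh$, so when I feed it into the averaging operator the only nodes where averaging actually changes the value are those lying on coarse faces $e \in \MEh$, and I must check that the averaging correction at such a node is controlled by the fine-face jumps $\jump{\wt v_h}$ on the fine sub-faces $\wt e \subset e$ meeting that node — which is where the weighted estimate in the third line of \eqref{eq_vhwtvh} is used. One has to be slightly careful that the local neighbourhoods in the averaging estimate involve only $O(1)$ simplices and $O(1)$ faces (true by shape-regularity), and that the $h_{\wt e}$ versus $h_e$ scaling is consistent (true by M1). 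Apart from this interface-tracking, every ingredient is a routine reference-element scaling argument, so I do not expect to grind through the constants.
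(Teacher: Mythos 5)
Your proposal is correct and follows essentially the same route as the paper: pass through Lemma \ref{le_vhwtvh} to transfer $v_h$ to the sub-decomposition $\wt{\mc{T}}_h$ with fine-face jumps controlled by the coarse-face jumps, apply the Karakashian--Pascal enriching operator on the simplicial mesh to obtain $\wt{v}_h \in \wt{V}_h^m \cap H^1(\Omega)$, and combine the two estimates (the paper simply cites the averaging estimate as a black box where you sketch its nodal-averaging construction). The $\beta = 1-2\alpha \in \{-1,1\}$ bookkeeping you flag is exactly how the two lemmas compose in the paper's argument.
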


\begin{proof}
  By Lemma \ref{le_vhwtvh}, there exists a piecewise polynomial
  $\wh{v}_h \in \wt{V}_h^m $ satisfying the estimate
  \eqref{eq_vhwtvh}. By \cite[Theorem
  2.1]{Karakashian2007convergence}, there exists a function
  $\wt{v}_h \in \wt{V}_h^m \cap H^1(\Omega)$ such that 
  \begin{displaymath}
    \sum_{\wt{K} \in \wt{\mc{T}}_h} \| \nabla^\alpha (\wh{v}_h -
    \wt{v}_h) \|_{L^2(\wt{K})}^2 \leq C \sum_{\wt{e} \in
    \wt{\mc{E}}_h^i} h_{\wt{e}}^{1 - 2\alpha} \| \jump{\wh{v}_h}
    \|_{L^2(\wt{e})}^2, \quad \alpha = 0, 1.
  \end{displaymath}
  Combining \eqref{eq_vhwtvh}, we have that 
  \begin{displaymath}
    \begin{aligned}
      \sum_{{K} \in {\mc{T}}_h} \| \nabla^\alpha ({v}_h - \wt{v}_h)
      \|_{L^2({K})}^2 & = \sum_{\wt{K} \in \wt{\mc{T}}_h} \|
      \nabla^\alpha (\wh{v}_h - \wt{v}_h) \|_{L^2(\wt{K})}^2 \leq C 
      \sum_{\wt{e} \in \wt{\mc{E}}_h^i}
      h_{\wt{e}}^{1 - 2\alpha} \| \jump{\wh{v}_h} \|_{L^2(\wt{e})}^2
      \\
      & \leq C  \sum_{e \in \MEh^i} h_e^{1 - 2\alpha} \| \jump{v_h}
      \|_{L^2(e)}^2,
    \end{aligned}
  \end{displaymath}
  which gives the inequality \eqref{eq_vdiff} and this completes the
  proof.
\end{proof}

\begin{lemma}
  For any $\bm{q}_h \in (\wt{V}_h^m)^d$, there exists a function
  $\wt{\bm{q}}_h \in (\wt{V}_h^m)^d \cap H^1(\Omega)^d$ such that 
  \begin{equation}
    \sum_{\wt{K} \in \wt{\mc{T}}_h} \|\nabla^\alpha (\bm{q}_h -
    \wt{\bm{q}}_h) \|_{L^2(\wt{K})}^2 \leq C\left( \sum_{\wt{e} \in
    \wt{\mc{E}}_h^i} h_{\wt{e}}^{1 - 2\alpha} \| \jump{\wt{\un}
    \otimes \bm{q}_h} \|_{L^2(\wt{e})}^2 + \sum_{\wt{e} \in
    \wt{\mc{E}}_h^b} h_{\wt{e}}^{1 - 2\alpha} \|\wt{\un} \times
    \bm{q}_h \|_{L^2(\wt{e})}^2 \right), \quad \alpha = 0, 1, 
    \label{eq_wqwqdiff}
  \end{equation}
  and the tangential trace $\un \times \wt{\bm{q}}_h$ vanishes on the
  boundary $\partial \Omega$.
  \label{le_wqwqinterpolate}
\end{lemma}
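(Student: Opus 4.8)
The plan is to realise $\wt{\bm{q}}_h$ by an Oswald-type nodal averaging onto the conforming Lagrange subspace $\wt{V}_h^m\cap H^1(\Omega)^d$ of the simplicial mesh $\wt{\mc{T}}_h$, followed by a boundary correction that removes the tangential trace on $\partial\Omega$. First I would treat the interior: by \cite[Theorem~2.1]{Karakashian2007convergence} applied componentwise to $\bm{q}_h\in(\wt{V}_h^m)^d$ (as in the proof of Lemma~\ref{le_vdiff}), there is a function $\bm{r}_h\in(\wt{V}_h^m\cap H^1(\Omega))^d$ whose Lagrange values coincide with those of $\bm{q}_h$ at nodes interior to a single simplex and equal the arithmetic mean of the incident element traces at shared nodes, and which satisfies
\[
  \sum_{\wt{K}\in\wt{\mc{T}}_h}\|\nabla^\alpha(\bm{q}_h-\bm{r}_h)\|_{L^2(\wt{K})}^2\le C\sum_{\wt{e}\in\wt{\mc{E}}_h^i}h_{\wt{e}}^{1-2\alpha}\|\jump{\wt{\un}\otimes\bm{q}_h}\|_{L^2(\wt{e})}^2,\qquad\alpha=0,1,
\]
where one uses the identity $\|\jump{\wt{\un}\otimes\bm{q}_h}\|_{L^2(\wt{e})}^2=\sum_{i=1}^{d}\|\jump{(\bm{q}_h)_i}\|_{L^2(\wt{e})}^2$ on the interior faces of $\wt{\mc{T}}_h$.

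Next I would correct the boundary. Since $\Omega$ is a convex polygon (polyhedron), $\partial\Omega$ is the union of finitely many flat facets $\Gamma_1,\ldots,\Gamma_L$ with constant unit outward normals $\un_1,\ldots,\un_L$. Define $\bm{z}_h\in(\wt{V}_h^m\cap H^1(\Omega))^d$ through its Lagrange values: $\bm{z}_h(\bm{x}_N)=\bm{0}$ at every node not lying on $\partial\Omega$; $\bm{z}_h(\bm{x}_N)=\bm{r}_h(\bm{x}_N)-(\bm{r}_h(\bm{x}_N)\cdot\un_l)\un_l$ at a node $\bm{x}_N$ lying in the relative interior of a single facet $\Gamma_l$; and $\bm{z}_h(\bm{x}_N)=\bm{r}_h(\bm{x}_N)$ at a node lying on the intersection of two or more facets (when $d=3$ it suffices to retain the component along the shared edge, but taking the full value does no harm). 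Then $\wt{\bm{q}}_h:=\bm{r}_h-\bm{z}_h$ lies in $(\wt{V}_h^m\cap H^1(\Omega))^d$ and satisfies $\un\times\wt{\bm{q}}_h=\bm{0}$ on $\partial\Omega$ by construction, so it only remains to estimate $\|\nabla^\alpha\bm{z}_h\|$.

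For that I would run the standard scaling argument: on each simplex $\wt{K}$, equivalence of norms on the reference element together with the inverse inequality M3 gives $\|\nabla^\alpha\bm{z}_h\|_{L^2(\wt{K})}^2\le C h_{\wt{K}}^{d-2\alpha}\sum_{\bm{x}_N}\lvert\bm{z}_h(\bm{x}_N)\rvert^2$, the sum running over the Lagrange nodes of $\wt{K}$, so only boundary nodes contribute. At a node $\bm{x}_N$ in the relative interior of $\Gamma_l$ the node is unshared, hence $\bm{r}_h(\bm{x}_N)=\bm{q}_h(\bm{x}_N)$ and $\lvert\bm{z}_h(\bm{x}_N)\rvert=\lvert\un_l\times\bm{q}_h(\bm{x}_N)\rvert$, which by a scaled inverse inequality on the adjacent boundary face $\wt{e}\subset\Gamma_l$ is bounded by $C h_{\wt{e}}^{-(d-1)/2}\|\wt{\un}\times\bm{q}_h\|_{L^2(\wt{e})}$; after multiplication by $h_{\wt{K}}^{d-2\alpha}$ and use of M1 this reproduces exactly the boundary contribution $h_{\wt{e}}^{1-2\alpha}\|\wt{\un}\times\bm{q}_h\|_{L^2(\wt{e})}^2$ of \eqref{eq_wqwqdiff}. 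At a shared node one additionally bounds the nodal deviation of $\bm{q}_h|_{\wt{K}}$ from $\bm{r}_h$ by the surrounding interior jumps $\|\jump{\wt{\un}\otimes\bm{q}_h}\|$, again through a scaled inverse inequality on the shared faces. Summing the element estimates and using $\|\nabla^\alpha(\bm{q}_h-\wt{\bm{q}}_h)\|\le\|\nabla^\alpha(\bm{q}_h-\bm{r}_h)\|+\|\nabla^\alpha\bm{z}_h\|$ then yields \eqref{eq_wqwqdiff}.

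The step I expect to be the main obstacle is the treatment of the nodes sitting on the edges and vertices where distinct facets of $\partial\Omega$ meet: there the single-facet tangential condition alone does not bound the nodal value $\bm{q}_h|_{\wt{K}}(\bm{x}_N)$. One has to exploit that the tangential traces coming from all facets incident to $\bm{x}_N$ together determine the whole vector --- the incident facet planes intersect only in the common edge direction when $d=3$ and trivially at a corner when $d=2$, the conditioning being governed by the angles of $\Omega$ --- and then transport this information from the facets to the particular simplex $\wt{K}$ by ``hopping'' across the uniformly bounded number of interior faces of the node patch, incurring one term $\|\jump{\wt{\un}\otimes\bm{q}_h}\|$ per hop. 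Organising this bookkeeping so that the final constant depends only on $\sigma$, $N$ and the geometry of $\Omega$ is the heart of the matter; the remaining arguments are the scaling and averaging machinery already used in Lemmas~\ref{le_vhwtvh} and~\ref{le_vdiff}.
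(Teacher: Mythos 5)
Your proposal follows essentially the same route as the paper's proof: an $H^1$-conforming Oswald/averaging operator via \cite[Theorem 2.1]{Karakashian2007convergence} for the interior jumps, followed by a nodal boundary correction that keeps only the normal component at Lagrange nodes interior to a single facet and zeroes the value entirely at nodes shared by several facets, the latter being controlled by the observation that the tangential traces with respect to two non-parallel facet normals determine the full vector. One caveat worth noting: your parenthetical suggestion that in $d=3$ one could instead retain the component along the shared edge would destroy the vanishing tangential trace (an edge direction is tangent to both incident facets, so $\un\times\wt{\bm{q}}_h$ would not vanish there), but since you in fact take the full value at such nodes, your construction coincides with the paper's and the argument goes through; no ``hopping'' across interior faces is needed at those nodes, since the averaged function is already single-valued and one converts its boundary traces back to those of $\bm{q}_h$ at the end with a trace inequality, exactly as the paper does.
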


\begin{proof}
  Again by \cite[Theorem 2.1]{Karakashian2007convergence}, there
  exists a piecewise polynomial $\wh{\bm{q}}_h \in (\wt{V}_h^m)^d \cap
  H^1(\Omega)^d$ such that 
  \begin{equation}
    \sum_{\wt{K} \in \wt{\mc{T}}_h} \|\nabla^\alpha (\bm{q}_h -
    \wh{\bm{q}}_h) \|_{ L^2(\wt{K})}^2 \leq C \sum_{\wt{e} \in
    \wt{\mc{E}}_h^i} h_{\wt{e}}^{1 - 2\alpha} \| \jump{ \wt{\un}
    \otimes \bm{q}_h } \|_{L^2(\wt{e})}^2.
    \label{eq_qwhqdiff}
  \end{equation}
  We will construct a new piecewise polynomial $\wt{\bm{q}}_h \in
  (\wt{V}_h^m)^d \cap H^1(\Omega)^d$ based on $\wh{\bm{q}}_h$, which
  satisfies the inequality \eqref{eq_wqwqdiff} and its tangential
  trace vanishes on the boundary. 

  \newcommand\bu{\bm{\nu}}

  We denote by $\mc{N} = \left\{ \bm{\nu}_0, \bm{\nu}_1, \ldots,
  \bm{\nu}_n \right\}$ the Lagrange points with respect to the
  partition $\wt{\mc{T}}_h$ and we let $\{ \phi_{\bu_0}, \phi_{\bu_1},
  \ldots, \phi_{\bu_n} \}$ be the corresponding basis functions such
  that $\phi_{\bu_i}(\bu_j) = \delta_{ij} $. Then we divide the set
  $\mc{N}$ into three disjoint subsets (see Fig.~\ref{fig_Npoint}): 
  \begin{equation}
    \begin{aligned}
      \mc{N}_i & := \left\{ \bu \in \mc{N} \ | \  \bu \text{ is
      interior to the domain } \Omega \right\}, \\
      \mc{N}_v & := \{ \bu \in \mc{N} \ | \ \bu \text{ is shared
      by two different slides of the boundary } \partial \Omega \}, 
      \\
      \mc{N}_b & := \mc{N} \backslash ( \mc{N}_i \cup \mc{N}_v). \\
    \end{aligned}
    \label{eq_divideN}
  \end{equation}
  We note that the points in $\mc{N}_b$ are interior to one slide of
  the boundary $\partial \Omega$, and particularly in two dimensions
  the points in $\mc{N}_v$ are vertices of the boundary $\partial
  \Omega$. 

  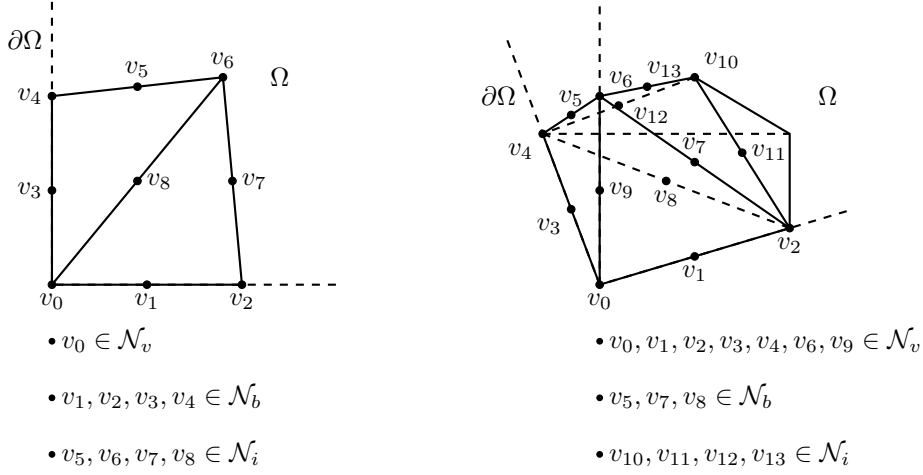
\begin{figure}[htp] 
    \centering
    \begin{minipage}[t]{0.46\textwidth}
      \begin{center}
        \begin{tikzpicture}[scale=2.5]
          \draw[thick] (0, 1) -- (0, 0) -- (1, 0);
          \draw[thick] (0, 0) -- (0.9, 1.1) -- (0, 1);
          \draw[thick] (0.9, 1.1) -- (1, 0);
          \draw[thick, dashed] (0, 1.5) -- (0, 0) -- (1.5, 0);
          \node[left] at (0, 1.3) {$\partial \Omega$};
          \node[] at (1.2, 1.1) {$\Omega$};
          \draw[fill, black] (0, 0) circle [radius = 0.02];
          \node[below] at (0, 0) {$v_0$};
          \draw[fill, black] (0.5, 0) circle [radius = 0.02];
          \node[below] at (0.5, 0) {$v_1$};
          \draw[fill, black] (1, 0) circle [radius = 0.02];
          \node[below] at (1, 0) {$v_2$};
          \draw[fill, black] (0, 0.5) circle [radius = 0.02];
          \node[left] at (0, 0.5) {$v_3$};
          \draw[fill, black] (0, 1) circle [radius = 0.02];
          \node[left] at (0, 1) {$v_4$};
          \draw[fill, black] (0.45, 1.05) circle [radius = 0.02];
          \node[above] at (0.45, 1.05) {$v_5$};
          \draw[fill, black] (0.9, 1.1) circle [radius = 0.02];
          \node[above] at (0.9, 1.1) {$v_6$};
          \draw[fill, black] (0.95, 0.55) circle [radius = 0.02];
          \node[right] at (0.95, 0.55) {$v_7$};
          \draw[fill, black] (0.45, 0.55) circle [radius = 0.02];
          \node[right] at (0.45, 0.55) {$v_8$};
          \draw[fill, black] (0, -0.3) circle [radius = 0.015];
          \node[right] at (0, -0.3) {$v_0 \in \mc{N}_v$};
          \draw[fill, black] (0, -0.6) circle [radius = 0.015];
          \node[right] at (0, -0.6) {$v_1, v_2, v_3, v_4 \in
          \mc{N}_b$};
          \draw[fill, black] (0, -0.9) circle [radius = 0.015];
          \node[right] at (0, -0.9) {$v_5, v_6, v_7, v_8 \in
          \mc{N}_i$};
        \end{tikzpicture}
      \end{center}
    \end{minipage}
    \begin{minipage}[t]{0.46\textwidth}
      \begin{center}
        \begin{tikzpicture}[scale=2.5]
          \draw[thick] (0, 0) -- (1, 0.3) -- (0, 1) -- (0, 0);
          \draw[thick] (0, 0) -- (-0.3, 0.8) -- (0, 1);
          \draw[thick, dashed] (-0.3, 0.8) -- (1, 0.3);
          \draw[thick] (0.5, 1.1) -- (0, 1);
          \draw[thick] (0.5, 1.1) -- (1, 0.3);
          \draw[thick, dashed] (0.5, 1.1) -- (-0.3, 0.8);
          \draw[thick, dashed] (1.3, 0.39) -- (0, 0) -- (-0.5,
          1.333333);
          \draw[thick, dashed] (0, 0) -- (0, 1.5);
          \draw[thick] (1, 0.3) -- (1, 0.8) -- (0.5, 1.1);
          \draw[thick, dashed] (-0.3, 0.8) -- (1, 0.8);
          \node[left] at (-0.39, 1.0) {$\partial \Omega$};
          \node[] at (1.2, 1) {$\Omega$};
          \draw[fill, black] (0.0, 0) circle [radius = 0.02];
          \node[below] at (0, 0) {$v_0$};
          \draw[fill, black] (0.5, 0.15) circle [radius = 0.02];
          \node[below] at (0.5, 0.15) {$v_1$};
          \draw[fill, black] (1.0, 0.3) circle [radius = 0.02];
          \node[below] at (1.0, 0.3) {$v_2$};
          \draw[fill, black] (-0.15, 0.4) circle [radius = 0.02];
          \node[below left] at (-0.15, 0.4) {$v_3$};
          \draw[fill, black] (-0.3, 0.8) circle [radius = 0.02];
          \node[below left] at (-0.3, 0.8) {$v_4$};
          \draw[fill, black] (-0.15, 0.9) circle [radius = 0.02];
          \node[above] at (-0.15, 0.9) {$v_5$};
          \draw[fill, black] (0, 1.0) circle [radius = 0.02];
          \node[above right] at (0, 1.0) {$v_6$};
          \draw[fill, black] (0.5, 0.65) circle [radius = 0.02];
          \node[above] at (0.5, 0.65) {$v_7$};
          \draw[fill, black] (0.35, 0.55) circle [radius = 0.02];
          \node[below] at (0.35, 0.55) {$v_8$};
          \draw[fill, black] (0., 0.5) circle [radius = 0.02];
          \node[right] at (0., 0.5) {$v_9$};
          \node[above right] at (0.5, 1.1) {$v_{10}$};
          \draw[fill, black] (0.5, 1.1) circle [radius = 0.02];
          \node[above right] at (0.2, 1.05) {$v_{13}$};
          \draw[fill, black] (0.25, 1.05) circle [radius = 0.02];
          \node[right] at (0.75, 0.7) {$v_{11}$};
          \draw[fill, black] (0.1, 0.95) circle [radius = 0.02];
          \node[right] at (0.13, 0.9) {$v_{12}$};
          \draw[fill, black] (0.75, 0.7) circle [radius = 0.02];
          \draw[fill, black] (0, -0.3) circle [radius = 0.015];
          \node[right] at (0, -0.3) {$v_0, v_1, v_2, v_3, v_4,
          v_6, v_9 \in \mc{N}_v$};
          \draw[fill, black] (0, -0.6) circle [radius = 0.015];
          \node[right] at (0, -0.6) {$v_5, v_7, v_8 \in \mc{N}_b$};
          \draw[fill, black] (0, -0.9) circle [radius = 0.015];
          \node[right] at (0, -0.9) {$v_{10}, v_{11}, v_{12}, v_{13}
          \in \mc{N}_i$};
        \end{tikzpicture}
      \end{center}
    \end{minipage}
    \caption{Examples of Lagrange nodes in two dimensions (left) /
    in three dimensions (right).}
    \label{fig_Npoint}
  \end{figure}
  By $\{ \phi_{\bu_i} \}$, the function $\wh{\bm{q}}_h = \left(
  \wh{q}^1_h, \ldots, \wh{q}^d_h\right)$ can be expanded as
  $\wh{q}^i_h = \sum_{\bu \in \mc{N}} \alpha_{\bu}^j \phi_{\bu}(1 \leq
  j \leq d)$. Then we construct a new group of coefficients $\{
  \beta_{\bu}^j \}$ by 
  \begin{equation}
    \beta_{\bu}^j := \begin{cases}
      \alpha_{\bu}^j, & \text{for } \bu \in \mc{N}_i, \\
      \wt{\beta}_{\bu}^j, & \text{for } \bu \in \mc{N}_b, \\
      0, & \text{for } \bu \in \mc{N}_v. \\
    \end{cases}
    \label{eq_betadef}
  \end{equation}
  For $\bu \in \mc{N}$, we denote $\bm{\alpha}_{\bu}$ and
  $\bm{\beta}_{\bu}$ as $\bm{\alpha}_{\bu} = (\alpha_{\bu}^1, \ldots,
  \alpha_{\bu}^d)^T$ and $\bm{\beta}_{\bu} = (\beta_{\bu}^1, \ldots,
  \beta_{\bu}^d)^T$, respectively.  Then we determine the values of
  $\wt{\beta}_{\bu}^j$. By the definition \eqref{eq_divideN}, for any
  $\bu \in \mc{N}_b$ there exists a boundary face $\wt{e} \in
  \wt{\mc{E}}_h^b$ that includes the point $\bu$, and we let its
  corresponding coefficients satisfy that 
  \begin{displaymath}
    \wt{\un} \times \bm{\beta}_{\bu} = 0, \quad \wt{\un} \cdot
    \bm{\beta}_{\bu} = \wt{\un} \cdot \bm{\alpha}_{\bu},
  \end{displaymath}
  where $\wt{\un}$ is the unit outward normal with respect to the
  boundary face $\wt{e}$. We construct a new piecewise polynomial
  $\wt{\bm{q}}_h = (\wt{q}_h^1, \ldots, \wt{q}_h^d)^T $ where
  $\wt{q}_h^j = \sum_{\bu \in \mc{N}} \beta_{\bu}^j \phi_{\bu}(1 \leq
  j \leq d)$. It is trivial to check that the trace $\un \times
  \wt{\bm{q}}_h$ vanishes on the boundary $\partial \Omega$.  Then we
  will estimate the term $\|\nabla ( \wh{\bm{q}}_h - \wt{\bm{q}}_h)
  \|_{L^2(\Omega)}$.  Since $\wh{\bm{q}}_h$ and $\wt{\bm{q}}_h$ have
  the same value on the points in $\mc{N}_i$, one can see that
  \begin{displaymath}
    \| \nabla^\alpha ( \wh{\bm{q}}_h - \wt{\bm{q}}_h)
    \|_{L^2(\Omega)}^2 \leq C \sum_{\bu \in \mc{N}_b \cup \mc{N}_v}
    |\bm{\alpha}_{\bu} - \bm{\beta}_{\bu} |^2 \|\nabla^\alpha
    \phi_{\bu} \|_{L^2(\Omega)}^2.
  \end{displaymath}
  We first consider the points in the set $\mc{N}_b$. Again for any
  $\bu \in \mc{N}_b$, we have that there exists a face $\wt{e} \in
  \wt{\mc{E}}_h^b$ such that $\bu \in \wt{e}$, and by the scaling
  argument and the shape regularity of the partition $\wt{\mc{T}}_h$,
  there holds $\|\nabla^\alpha \phi_{\bu} \|_{L^2(\Omega)}^2 \leq C
  h_{\wt{e}}^{d - 2\alpha}$.  Combining with \eqref{eq_betadef} and
  the inverse estimate, we obtain that 
  \begin{displaymath}
    \begin{aligned}
      \sum_{\bu \in \mc{N}_b} |\bm{\alpha}_{\bu} - \bm{\beta}_{\bu}
      |^2 \|\nabla^\alpha \phi_{\bu} \|_{L^2(\Omega)}^2 &= \sum_{\bu
      \in \mc{N}_b} \|\nabla^\alpha \phi_{\bu} \|_{L^2(\Omega)}^2
      \left( | \wt{\un} \times (\bm{\alpha}_{\bu} - \bm{\beta}_{\bu})
      |^2 + | \wt{\un} \cdot (\bm{\alpha}_{\bu} - \bm{\beta}_{\bu})
      |^2 \right) \\
      & \leq C \sum_{\bu \in \mc{N}_b} h_{\wt{e}}^{d - 2\alpha}
      |\wt{\un} \times \bm{\alpha}_{\bu} |^2 = C  \sum_{\bu \in
      \mc{N}_b} h_{\wt{e}}^{d - 2 \alpha} |\wt{\un} \times
      \wh{\bm{q}}_h(\bu) |^2 \\
      &\leq C \sum_{\bu \in \mc{N}_b} h_{\wt{e}}^{d - 2 \alpha}
      \|\wt{\un} \times \wh{\bm{q}}_h \|_{L^{\infty}(\wt{e})}^2 \leq C
      \sum_{\bu \in \mc{N}_b} h_{\wt{e}}^{1 - 2\alpha} \|\wt{\un}
      \times \wh{\bm{q}}_h \|_{L^{2}(\wt{e})}^2 \\
      & \leq C \sum_{\wt{e} \in \wt{\mc{E}}_h^b} h_{\wt{e}}^{1 -
      2\alpha} \|\wt{\un} \times \wh{\bm{q}}_h \|_{L^{2}(\wt{e})}^2.
    \end{aligned}
  \end{displaymath}
  Then we move on to the points in $\mc{N}_v$. By definition
  \eqref{eq_divideN}, for every $\bu \in \mc{N}_v$ there exist two
  adjacent faces $\wt{e}_1 \in \wt{\mc{E}}_h^b$ and $\wt{e}_2 \in
  \wt{\mc{E}}_h^b$ such that $\bu \in \wt{e}_1 \cap \wt{e}_2$. We note
  that $\wt{e}_1$ and $\wt{e}_2$ are not parallel and are included in
  two different slides of $\partial \Omega$. We let $\wt{\un}_1$ and
  $\wt{\un}_2$ be the unit outward normals corresponding to $\wt{e}_1$
  and $\wt{e}_2$, respectively, and clearly we have $\wt{\un}_1 \neq
  \wt{\un}_2$. This fact implies that there exists a positive constant
  $C$ such that 
  \begin{equation}
    |\bm{v} |^2 \leq C \left( | \wt{\un}_1 \times \bm{v} |^2 + |
    \wt{\un}_2 \times \bm{v} |^2 \right), \quad \text{for } \forall
    \bm{v} \in \mb{R}^d.
    \label{eq_vn1vn2v}
  \end{equation}
  It should be noted that the constant $C$ only replies on the angle
  of $\wt{\un}_1$ and $\wt{\un}_2$ and this angle only depends on the
  boundary $\partial \Omega$. Then we derive that 
  \begin{displaymath}
    \begin{aligned}
      \sum_{\bu \in \mc{N}_v} |\bm{\alpha}_{\bu} - \bm{\beta}_{\bu}
      |^2 \|\nabla^\alpha \phi_{\bu} \|_{L^2(\Omega)}^2 &= \sum_{\bu
      \in \mc{N}_v} \|\nabla^\alpha \phi_{\bu} \|_{L^2(\Omega)}^2 |
      \bm{\alpha}_{\bu} |^2 \\ 
      &\leq C \sum_{\bu \in \mc{N}_v} \|\nabla^\alpha \phi_{\bu}
      \|_{L^2(\Omega)}^2 \left( | \wt{\un}_1 \times \bm{\alpha}_{\bu}
      |^2 + | \wt{\un}_2 \times \bm{\alpha}_{\bu} |^2 \right) \\
      & \leq C \sum_{\bu \in \mc{N}_v} \left( h_{\wt{e}_1}^{d -
      2\alpha} |\wt{\un}_1 \times \wh{\bm{q}}_h({\bu}) |^2 +
      h_{\wt{e}_2}^{d -
      2\alpha} |\wt{\un}_2 \times \wh{\bm{q}}_h({\bu}) |^2\right) \\
      & \leq C \sum_{\bu \in \mc{N}_v}  \left( h_{e_1}^{d - 2\alpha}
      \| \wt{\un}_1 \times \wh{\bm{q}}_h \|_{L^{\infty}(\wt{e}_1)}^2 +
      h_{\wt{e}_2}^{d - 2\alpha} \| \wt{\un}_2 \times \wh{\bm{q}}_h
      \|_{L^{\infty}(\wt{e}_2)}^2\right) \\
      & \leq C \sum_{\bu \in \mc{N}_v} \left(  h_{\wt{e}_1}^{1 -
      2\alpha} \| \wt{\un}_1 \times \wh{\bm{q}}_h
      \|_{L^{2}(\wt{e}_1)}^2 + h_{\wt{e}_2}^{1 - 2\alpha} \|
      \wt{\un}_1 \times \wh{\bm{q}}_h \|_{L^{2}(\wt{e}_2)}^2 \right)
      \\
      & \leq C \sum_{\wt{e} \in \wt{\mc{E}}_h^b} h_{\wt{e}}^{1 -
      2\alpha} \| \wt{\un} \times \wh{\bm{q}}_h \|_{L^{2}(\wt{e})}^2.
    \end{aligned}
  \end{displaymath}
  Thus, we arrive at 
  \begin{displaymath}
    \|\nabla^\alpha (\wh{\bm{q}}_h - \wt{\bm{q}}_h) \|_{L^2(\Omega)}^2
    \leq C \sum_{e \in \wt{\mc{E}}_h^b} h_e^{1 - 2\alpha} \|\un \times
    \wh{\bm{q}}_h \|_{L^{2}(e)}^2.
  \end{displaymath}
  We finally present that the error $\|\nabla^\alpha (\bm{q}_h -
  \wt{\bm{q}}_h) \|_{L^2(\wt{\mc{T}}_h)}$ satisfies the estimate
  \eqref{eq_wqwqdiff}. We have that 
  \begin{displaymath}
    \begin{aligned}
      \| \nabla^\alpha (\bm{q}_h - \wt{\bm{q}}_h)
      \|_{L^2(\wt{\mc{T}}_h)}^2 & \leq C \left( \| \nabla^\alpha
      (\bm{q}_h - \wh{\bm{q}}_h) \|_{L^2(\wt{\mc{T}}_h)}^2 + \|
      \nabla^\alpha (\wt{\bm{q}}_h - \wh{\bm{q}}_h)
      \|_{L^2(\Omega)}^2 \right) \\
      & \leq C \left( \sum_{\wt{e} \in \wt{\mc{E}}_h^i} h_{\wt{e}}^{1
      - 2\alpha} \| \jump{\un \otimes \bm{q}_h} \|_{L^2(\wt{e})}^2 +
      \sum_{\wt{e} \in \wt{\mc{E}}_h^b} h_{\wt{e}}^{1 - 2\alpha} \|
      \un \times \wh{\bm{q}}_h \|_{L^2(\wt{e})}^2   \right), 
    \end{aligned}
  \end{displaymath}
  and together with \cite[Theorem 2.1]{Karakashian2007convergence} and
  the trace inequality, we deduce that
  \begin{displaymath}
    \begin{aligned}
      \sum_{e \in \wt{\mc{E}}_h^b} h_{\wt{e}}^{1 - 2\alpha} \|
      \wt{\un} \times \wh{\bm{q}}_h \|_{L^2(\wt{e})}^2 & \leq C
      \sum_{e \in \wt{\mc{E}}_h^b} \left( h_{\wt{e}}^{1 - 2\alpha} \|
      \wt{\un} \times {\bm{q}}_h \|_{L^2(\wt{e})}^2 +   h_{\wt{e}}^{1
      - 2\alpha} \| \wt{\un} \times ( {\bm{q}}_h - \wh{\bm{q}}_h)
      \|_{L^2(\wt{e})}^2 \right) \\
      & \leq C \left(  \sum_{\wt{e} \in \wt{\mc{E}}_h^b} h_{\wt{e}}^{1
      - 2\alpha} \| \wt{\un} \times {\bm{q}}_h \|_{L^2(\wt{e})}^2 +
      \sum_{K \in \wt{\mc{T}}_h} h_K^{-2\alpha} \|\bm{q}_h -
      \wh{\bm{q}}_h \|_{L^2(K)}^2   \right) \\
      & \leq C   \left(  \sum_{\wt{e} \in \wt{\mc{E}}_h^i}
      h_{\wt{e}}^{1 - 2\alpha} \| \wt{\un} \otimes {\bm{q}}_h
      \|_{L^2(\wt{e})}^2 +   \sum_{\wt{e} \in \wt{\mc{E}}_h^b}
      h_{\wt{e}}^{1 - 2\alpha} \| \wt{\un} \times {\bm{q}}_h
      \|_{L^2(\wt{e})}^2 \right). \\
    \end{aligned}
  \end{displaymath}
  Combining all inequalities yields the estimate \eqref{eq_wqwqdiff}
  and completes the proof.
\end{proof}

\begin{lemma}
  For any $\bmr{V}_h \in (V_h^m)^{d \times d}$, there
  exists a function $\wt{\bmr{V}}_h \in (\wt{V}_h^m)^{d \times d}
  \cap H^1(\Omega)^{d \times d}$ such that 
  \begin{equation}
    \sum_{K \in \MTh} \| \nabla^\alpha (\bmr{V}_h - \wt{\bmr{V}}_h)
    \|_{L^2(K)}^2 \leq C \left( \sum_{e \in \MEh^i} h_e^{1 - 2\alpha}
    \| \jump{ \un \otimes \bmr{V}_h} \|_{L^2(e)}^2 + \sum_{e \in
    \MEh^b} h_e^{1 - 2\alpha} \|  \un \times \bmr{V}_h \|_{L^2(e)}^2
    \right), 
    \label{eq_qinterpolate}
  \end{equation}
  for $\alpha = 0, 1$, and the tangential trace $\un \times
  \wt{\bmr{V}}_h$ vanishes on the boundary $\partial \Omega$.
  \label{le_qinterpolate}
\end{lemma}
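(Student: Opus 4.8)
The plan is to reduce Lemma \ref{le_qinterpolate} to its vector-valued counterpart, Lemma \ref{le_wqwqinterpolate}, by treating the tensor $\bmr{V}_h$ one column at a time, and to pass between the two skeletons $\wt{\mc{E}}_h$ and $\MEh$ by an elementary face-by-face comparison. Write $\bmr{V}_h = (\bm{V}_h^1, \ldots, \bm{V}_h^d)$ where $\bm{V}_h^j$ denotes the $j$-th column of $\bmr{V}_h$, so that $\bm{V}_h^j \in (V_h^m)^d \subset (\wt{V}_h^m)^d$. Recall that $\un \otimes \cdot$, $\un \times \cdot$ and $\nabla^\alpha$ act on tensors columnwise, whence
\begin{displaymath}
  \| \jump{\un \otimes \bmr{V}_h} \|_{L^2(e)}^2 = \sum_{j=1}^d \| \jump{\un \otimes \bm{V}_h^j} \|_{L^2(e)}^2, \qquad \| \un \times \bmr{V}_h \|_{L^2(e)}^2 = \sum_{j=1}^d \| \un \times \bm{V}_h^j \|_{L^2(e)}^2,
\end{displaymath}
and the same column splitting holds for the broken $H^1$ seminorm over the elements.

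First I would apply Lemma \ref{le_wqwqinterpolate} to each $\bm{V}_h^j$, obtaining $\wt{\bm{V}}_h^j \in (\wt{V}_h^m)^d \cap H^1(\Omega)^d$ whose tangential trace vanishes on $\partial\Omega$ and which satisfies \eqref{eq_wqwqdiff} with right-hand side written as sums over $\wt{\mc{E}}_h^i$ and $\wt{\mc{E}}_h^b$. The next step is to replace those refinement-level face sums by sums over $\MEh$, using that each $\bm{V}_h^j$ is already piecewise polynomial on the coarser partition $\MTh$: for an interior face $\wt{e} \in \wt{\mc{E}}_h^i$ either $\wt{e} \subset e$ for some $e \in \MEh^i$, in which case $\jump{\wt{\un} \otimes \bm{V}_h^j}|_{\wt{e}}$ is the restriction of $\jump{\un \otimes \bm{V}_h^j}|_e$, or $\wt{e}$ lies in the interior of some $K \in \MTh$, where $\bm{V}_h^j$ is a single polynomial and $\jump{\wt{\un} \otimes \bm{V}_h^j}|_{\wt{e}} = 0$; likewise every $\wt{e} \in \wt{\mc{E}}_h^b$ is contained in some $e \in \MEh^b$, on which $\wt{\un} \times \bm{V}_h^j|_{\wt{e}}$ restricts $\un \times \bm{V}_h^j|_e$. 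Since $h_{\wt{e}} \sim h_e$ by the shape regularity of $\wt{\mc{T}}_h$ together with M1 and the bound $N$ on the number of sub-simplices per element, arguing as in the last estimate of the proof of Lemma \ref{le_vhwtvh} gives, for $\alpha = 0, 1$,
\begin{displaymath}
  \sum_{\wt{e} \in \wt{\mc{E}}_h^i} h_{\wt{e}}^{1-2\alpha} \| \jump{\wt{\un} \otimes \bm{V}_h^j} \|_{L^2(\wt{e})}^2 \leq C \sum_{e \in \MEh^i} h_e^{1-2\alpha} \| \jump{\un \otimes \bm{V}_h^j} \|_{L^2(e)}^2,
\end{displaymath}
together with the analogous estimate over boundary faces with $\jump{\cdot}$ replaced by $\un \times \cdot$.

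Finally I would set $\wt{\bmr{V}}_h := (\wt{\bm{V}}_h^1, \ldots, \wt{\bm{V}}_h^d)$. By construction $\wt{\bmr{V}}_h \in (\wt{V}_h^m)^{d\times d} \cap H^1(\Omega)^{d\times d}$ and, the tangential trace acting columnwise, $\un \times \wt{\bmr{V}}_h = 0$ on $\partial\Omega$. Because $\wt{\mc{T}}_h$ refines $\MTh$, the $L^2$ norms are additive over the refinement, so
\begin{displaymath}
  \sum_{K \in \MTh} \| \nabla^\alpha (\bmr{V}_h - \wt{\bmr{V}}_h) \|_{L^2(K)}^2 = \sum_{j=1}^d \sum_{\wt{K} \in \wt{\mc{T}}_h} \| \nabla^\alpha (\bm{V}_h^j - \wt{\bm{V}}_h^j) \|_{L^2(\wt{K})}^2;
\end{displaymath}
inserting \eqref{eq_wqwqdiff} for each $j$, then the skeleton comparison above, and recombining the columnwise sums into $\| \jump{\un \otimes \bmr{V}_h} \|_{L^2(e)}^2$ and $\| \un \times \bmr{V}_h \|_{L^2(e)}^2$ yields \eqref{eq_qinterpolate}. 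The argument is mostly bookkeeping once Lemma \ref{le_wqwqinterpolate} is in hand; the only point needing care is the case distinction for refinement faces — ensuring that faces of $\wt{\mc{T}}_h$ not lying on the $\MTh$-skeleton contribute nothing because $\bmr{V}_h$ is polynomial across them — which is exactly the mechanism already used for scalars in Lemma \ref{le_vhwtvh}.
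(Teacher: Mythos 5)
Your proposal is correct and follows essentially the same route as the paper: a columnwise reduction to Lemma \ref{le_wqwqinterpolate}, combined with the observation that a coarse-mesh piecewise polynomial has vanishing jumps across the sub-faces interior to each $K \in \MTh$ and comparable weighted jump norms on the sub-faces of each $e \in \MEh$ (the paper packages this skeleton comparison as a vector-valued version of Lemma \ref{le_vhwtvh} applied before invoking Lemma \ref{le_wqwqinterpolate}, whereas you perform it afterward, which is the same bookkeeping in the opposite order).
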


\begin{proof}
  We columnwise expand $\bmr{V}_h$ as $\bmr{V}_h = (\bm{v}_h^1,
  \ldots, \bm{v}_h^d)$ where $\bm{v}_h^i \in (V_h^m)^d(1 \leq i \leq
  d)$. By Lemma \ref{le_vhwtvh}, there exist a piecewise polynomial
  function $\wh{\bm{v}}_h^i \in (\wt{V}_h^m)^d$ such that
  \begin{equation}
    \begin{aligned}    
      \bm{v}_h^i & = \wh{\bm{v}}_h^i, && \text{in any } \wt{K} \in
      \wt{\mc{T}}_h, \\
      \jump{\wt{\un} \otimes \wh{\bm{v}}_h^i} &= 0, && \text{on any }
      \wt{e} \in \wt{\mc{E}}_h \backslash \MEh, \\
      \sum_{\wt{e} \in w(e)} h_{\wt{e}}^{1 - 2\alpha} \|
      \jump{\wt{\un} \otimes \wh{\bm{v}}_h^i} \|_{L^2(\wt{e})}^2  &
      \leq C h_e^{1 -2\alpha} \| \jump{{\un} \otimes \bm{v}_h^i}
      \|_{L^2(e)}^2 , && \text{on any } e \in \MEh^i, \\
      \sum_{\wt{e} \in w(e)} h_{\wt{e}}^{1 - 2\alpha} \|
      \jump{\wt{\un} \times \wh{\bm{v}}_h^i} \|_{L^2(\wt{e})}^2  &
      \leq C h_e^{1 -2\alpha} \| \jump{{\un} \times \bm{v}_h^i}
      \|_{L^2(e)}^2 , && \text{on any } e \in \MEh^b. \\
      \end{aligned}
      \label{eq_bmVwtbmV}
  \end{equation}
  By Lemma \ref{le_wqwqinterpolate}, for every $\wh{\bm{v}}_h^i$ there
  exists a piecewise polynomial function $\wt{\bm{v}}_h^i \in
  (\wt{V}_h^m)^d \cap H^1(\Omega)^d$ satisfying the estimate
  \eqref{eq_wqwqdiff} and the tangential trace of $\wt{\bm{v}}_h^i$
  equals to $0$ on the boundary.  We define $\wt{\bmr{V}}_h$ as
  $\wt{\bmr{V}}_h = (\wt{\bm{v}}_h^1, \ldots, \wt{\bm{v}}_h^d)$. By
  \eqref{eq_bmVwtbmV} and the estimate \eqref{eq_wqwqdiff}, it can be
  seen that for the polynomial $\wt{\bmr{V}}_h$ the estimate
  \eqref{eq_qinterpolate} holds true and its tangential trace vanishes
  on $\partial \Omega$.  This completes the proof.
\end{proof}

Next, we focus on the continuity and the coercivity of the bilinear
form $a_h^{\bmr{p}}(\cdot; \cdot)$. We begin by defining the following
two energy norms $\Unorm{\cdot}$ and $\pnorm{\cdot}$: 
\begin{displaymath}
  \begin{aligned}
    \Unorm{\bmr{V}_h}^2 :=  \sum_{K \in \MTh}\|\nabla \cdot \bmr{V}_h
    \|_{L^2(K)}^2 + & \sum_{e \in \MEh^i} h_e^{-1} \| \jump{ \un
    \otimes \bmr{V}_h} \|_{L^2(e)}^2 + \sum_{e \in \MEh^b} h_e^{-1} \|
    \un \times \bmr{V}_h \|_{L^2(e)}^2, \\
  \end{aligned}
\end{displaymath}
for any $\bmr{V}_h \in \bmr{I}_h^m + \bmr{I}^1(\Omega)$, and
\begin{displaymath}
  \begin{aligned}
    \pnorm{q_h}^2 :=&  \sum_{K \in \MTh} \|\nabla q_h\|_{L^2(K)}^2 + 
    \sum_{e \in \MEh^i} h_e^{-1} \| \jump{q_h} \|_{L^2(e)}, \\
  \end{aligned}
\end{displaymath}
and any $q_h \in \wt{U}_h^m + H^1(\Omega) / \mb{R}$.  We have the
following estimates which show that $\Unorm{\cdot}$ and
$\pnorm{\cdot}$ are actually norms on their corresponding spaces. 

\begin{lemma}
  There exist a constant $C$ such that
  \begin{equation}
    \|\bmr{V}_h \|_{L^2(\Omega)} \leq C \Unorm{\bmr{V}_h}, \quad
    \forall \bmr{V}_h \in \bmr{I}_h^m + \bmr{I}^1(\Omega).
    \label{eq_Unormbound}
  \end{equation}
  \label{le_Unormbound}
\end{lemma}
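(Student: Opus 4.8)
The strategy is to compare $\bmr{V}_h$ with a conforming companion and then invoke a Friedrichs-type inequality on the convex domain $\Omega$. By Lemma~\ref{le_qinterpolate} there exists $\wt{\bmr{V}}_h \in (\wt{V}_h^m)^{d\times d}\cap H^1(\Omega)^{d\times d}$ with $\un\times\wt{\bmr{V}}_h=\bm{0}$ on $\partial\Omega$ and
\begin{displaymath}
  \sum_{K\in\MTh}\|\nabla^\alpha(\bmr{V}_h-\wt{\bmr{V}}_h)\|_{L^2(K)}^2 \leq C\Big(\sum_{e\in\MEh^i} h_e^{1-2\alpha}\|\jump{\un\otimes\bmr{V}_h}\|_{L^2(e)}^2 + \sum_{e\in\MEh^b} h_e^{1-2\alpha}\|\un\times\bmr{V}_h\|_{L^2(e)}^2\Big),\quad \alpha=0,1.
\end{displaymath}
Taking $\alpha=0$ and using that $h_e$ is bounded by a constant depending only on $\Omega$ (so $h_e\leq C h_e^{-1}$), the right-hand side is dominated by $\Unorm{\bmr{V}_h}^2$, whence $\|\bmr{V}_h-\wt{\bmr{V}}_h\|_{L^2(\Omega)}\leq C\Unorm{\bmr{V}_h}$. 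It then remains to control $\|\wt{\bmr{V}}_h\|_{L^2(\Omega)}$.

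For this I would use the classical Friedrichs inequality for tangentially vanishing fields: on a convex domain $\Omega$, every $\bm{v}\in H^1(\Omega)^d$ with $\un\times\bm{v}=\bm{0}$ on $\partial\Omega$ satisfies $\|\bm{v}\|_{L^2(\Omega)}\leq C(\|\nabla\cdot\bm{v}\|_{L^2(\Omega)}+\|\nabla\times\bm{v}\|_{L^2(\Omega)})$. This rests on the a priori bound $\|\bm{v}\|_{H^1(\Omega)}\leq C(\|\nabla\cdot\bm{v}\|_{L^2(\Omega)}+\|\nabla\times\bm{v}\|_{L^2(\Omega)}+\|\bm{v}\|_{L^2(\Omega)})$, valid on convex domains, a compactness argument, and the fact that a field that is simultaneously divergence-free, curl-free and tangentially trivial must vanish (such a field is $\nabla\psi$ with $\psi$ harmonic and constant on the connected boundary $\partial\Omega$, hence constant). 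Applying this columnwise to $\wt{\bmr{V}}_h=(\wt{\bm{v}}_h^1,\dots,\wt{\bm{v}}_h^d)$ and summing over columns gives $\|\wt{\bmr{V}}_h\|_{L^2(\Omega)}^2\leq C(\|\nabla\cdot\wt{\bmr{V}}_h\|_{L^2(\Omega)}^2+\|\nabla\times\wt{\bmr{V}}_h\|_{L^2(\Omega)}^2)$. I expect this continuous inequality to be the crux: everything else is bookkeeping, but its validity is exactly where the convexity hypothesis on $\Omega$ (imposed in Section~\ref{sec_preliminaries}) is indispensable.

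Finally I would estimate $\|\nabla\cdot\wt{\bmr{V}}_h\|_{L^2(\Omega)}$ and $\|\nabla\times\wt{\bmr{V}}_h\|_{L^2(\Omega)}$ by $\Unorm{\bmr{V}_h}$; this is where the structure of $\bmr{I}_h^m+\bmr{I}^1(\Omega)$ enters, since every such $\bmr{V}_h$ is elementwise trace-free and, crucially, elementwise curl-free. On each $K$ one writes $\nabla\times\wt{\bmr{V}}_h=\nabla\times(\wt{\bmr{V}}_h-\bmr{V}_h)$ (the elementwise curl of $\bmr{V}_h$ being zero) and $\nabla\cdot\wt{\bmr{V}}_h=\nabla\cdot(\wt{\bmr{V}}_h-\bmr{V}_h)+\nabla\cdot\bmr{V}_h$; after the pointwise bounds $|\nabla\cdot\bm{\tau}|,|\nabla\times\bm{\tau}|\leq C|\nabla\bm{\tau}|$, the difference terms are controlled by $\sum_K\|\nabla(\wt{\bmr{V}}_h-\bmr{V}_h)\|_{L^2(K)}^2$, which by Lemma~\ref{le_qinterpolate} with $\alpha=1$ is $\leq C\Unorm{\bmr{V}_h}^2$, while $\sum_K\|\nabla\cdot\bmr{V}_h\|_{L^2(K)}^2$ is literally one of the three terms defining $\Unorm{\bmr{V}_h}^2$. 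Combining this with the triangle inequality $\|\bmr{V}_h\|_{L^2(\Omega)}\leq\|\bmr{V}_h-\wt{\bmr{V}}_h\|_{L^2(\Omega)}+\|\wt{\bmr{V}}_h\|_{L^2(\Omega)}$ yields \eqref{eq_Unormbound}. For $\bmr{V}_h$ in the full space $\bmr{I}_h^m+\bmr{I}^1(\Omega)$ rather than in $\bmr{I}_h^m$, one argues in the same way after decomposing $\bmr{V}_h$ into its piecewise-polynomial part and its $\bmr{I}^1(\Omega)$-part (the latter being globally $H^1$, curl-free and trace-free, and carrying no interior jumps); the remaining estimates are routine.
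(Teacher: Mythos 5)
The paper does not actually prove this lemma in the text: the proof is the single line ``we refer to \cite[Lemma 4.1]{li2019sequential}'', so there is nothing to compare against step by step, and your write-up is a genuine reconstruction. As such it gets the essential ingredients right, and in the right order: (i) pass to a conforming companion $\wt{\bmr{V}}_h$ with vanishing tangential trace whose $L^2$- and $H^1$-distances to $\bmr{V}_h$ are controlled by the jump and boundary terms of $\Unorm{\cdot}$ (this is exactly Lemma~\ref{le_qinterpolate}); (ii) apply, columnwise, the div--curl Friedrichs inequality for tangentially vanishing $H^1$ fields on the convex domain $\Omega$; (iii) observe that every member of $\bmr{I}_h^m+\bmr{I}^1(\Omega)$ is elementwise curl-free, so $\nabla\times\wt{\bmr{V}}_h=\nabla\times(\wt{\bmr{V}}_h-\bmr{V}_h)$ is again controlled by the companion estimate with $\alpha=1$. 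Your remark that the curl-free constraint is the crux is exactly right: $\Unorm{\cdot}$ contains no curl term, so without that structural property of the space the inequality would simply be false.

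The one step that does not close as written is the passage from $\bmr{I}_h^m$ to the sum space $\bmr{I}_h^m+\bmr{I}^1(\Omega)$. Lemma~\ref{le_qinterpolate} is stated only for piecewise polynomials, and the proposed fix --- decompose $\bmr{V}_h=\bmr{W}_h+\bmr{W}$ with $\bmr{W}_h\in\bmr{I}_h^m$, $\bmr{W}\in\bmr{I}^1(\Omega)$, and ``argue the same way'' --- is not routine, because the decomposition is not stable in $\Unorm{\cdot}$: the boundary tangential traces $\un\times\bmr{W}_h$ and $\un\times\bmr{W}$ may each be large while cancelling in the sum, so neither summand's energy norm is bounded by $\Unorm{\bmr{V}_h}$, and adding the continuous part $\bmr{W}$ back to the tangentially vanishing companion of $\bmr{W}_h$ destroys the boundary condition needed for step (ii). A correct treatment should either extend the companion construction to piecewise $H^1$ tensors (the averaging operator of \cite{Karakashian2007convergence} and the boundary correction of Lemma~\ref{le_wqwqinterpolate} go through for piecewise $H^1$ data after an elementwise polynomial projection, at the cost of an extra, harmless approximation term), or work with a version of the div--curl inequality that keeps the tangential boundary trace on the right-hand side and is applied to a companion that only removes interior jumps; the resulting boundary term is then absorbed using $\sum_{e\in\MEh^b}\|\un\times\bmr{V}_h\|_{L^2(e)}^2\le \mathrm{diam}(\Omega)^2\sum_{e\in\MEh^b}h_e^{-1}\|\un\times\bmr{V}_h\|_{L^2(e)}^2$ together with a trace inequality for $\wt{\bmr{V}}_h-\bmr{V}_h$. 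Everything else in your argument is sound.
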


\begin{proof}
  we refer to \cite[Lemma 4.1]{li2019sequential} for the proof.
\end{proof}

\begin{lemma}
  There exist a constant $C$ such that 
  \begin{equation}
    \|q_h\|_{L^2(\Omega)} \leq C \pnorm{q_h}, \quad \forall q_h \in
    \wt{U} + H^1(\Omega)/R.
    \label{eq_pnormbound}
  \end{equation}
  \label{le_pnormblund}
\end{lemma}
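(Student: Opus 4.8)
The plan is to prove a broken Poincaré--Wirtinger inequality: I would first replace $q_h$ by a genuinely $H^1(\Omega)$ function manufactured from Lemma~\ref{le_vdiff}, then apply the classical Poincaré inequality on the convex domain $\Omega$, carefully tracking the zero mean value constraint.

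First I would split $q_h = r_h + s$ with $r_h \in \wt{U}_h^m \subset V_h^m$ and $s \in H^1(\Omega)/\mb{R}$; since $s$ is single-valued on each interior face, $\jump{q_h} = \jump{r_h}$ on every $e \in \MEh^i$. Applying Lemma~\ref{le_vdiff} to $r_h$ furnishes a conforming piecewise polynomial $\wt{r}_h \in \wt{V}_h^m \cap H^1(\Omega)$, and I would set $\wt{q}_h := \wt{r}_h + s \in H^1(\Omega)$, so that for $\alpha = 0, 1$
\[
  \sum_{K \in \MTh} \| \nabla^\alpha (q_h - \wt{q}_h) \|_{L^2(K)}^2
  = \sum_{K \in \MTh} \| \nabla^\alpha (r_h - \wt{r}_h) \|_{L^2(K)}^2
  \leq C \sum_{e \in \MEh^i} h_e^{1 - 2\alpha} \| \jump{q_h} \|_{L^2(e)}^2 .
\]

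Then the estimate follows from the triangle inequality $\| q_h \|_{L^2(\Omega)} \leq \| q_h - \wt{q}_h \|_{L^2(\Omega)} + \| \wt{q}_h \|_{L^2(\Omega)}$. The first term is immediate from the $\alpha = 0$ case above together with the bound $h_e \leq \mathrm{diam}(\Omega)$, which gives $\| q_h - \wt{q}_h \|_{L^2(\Omega)}^2 \leq C \sum_{e \in \MEh^i} h_e^{-1} \| \jump{q_h} \|_{L^2(e)}^2 \leq C \pnorm{q_h}^2$. For the second term, the $\alpha = 1$ case yields $\| \nabla \wt{q}_h \|_{L^2(\Omega)} \leq C \pnorm{q_h}$; moreover, since $\wt{U}_h^m$ and $H^1(\Omega)/\mb{R}$ both consist of mean-free functions, $q_h$ is mean-free, so $\int_\Omega \wt{q}_h \d{x} = \int_\Omega (\wt{q}_h - q_h) \d{x}$ is bounded by $C \pnorm{q_h}$ in absolute value. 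Writing $\bar{q} := |\Omega|^{-1} \int_\Omega \wt{q}_h \d{x}$ and applying the Poincaré--Wirtinger inequality $\| \wt{q}_h - \bar{q} \|_{L^2(\Omega)} \leq C \| \nabla \wt{q}_h \|_{L^2(\Omega)}$, then adding back the small mean, gives $\| \wt{q}_h \|_{L^2(\Omega)} \leq C \pnorm{q_h}$, which completes the proof of \eqref{eq_pnormbound}.

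I expect the only genuinely delicate point to be the mean-value bookkeeping: $\wt{q}_h$ is not itself mean-free, so the Poincaré inequality cannot be applied to it directly, and its mean must instead be controlled through the smallness of $q_h - \wt{q}_h$. The preliminary splitting $q_h = r_h + s$ is needed purely because Lemma~\ref{le_vdiff} is stated for the space $V_h^m$ and not for the larger space $\wt{U}_h^m + H^1(\Omega)/\mb{R}$; everything else is routine manipulation with the interpolation estimate already at hand, in the same spirit as Lemma~\ref{le_Unormbound}.
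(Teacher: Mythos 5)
Your proof is correct, and the mean-value bookkeeping you flag as the delicate point is handled properly: the splitting $q_h = r_h + s$ with both parts mean-free does force $\int_\Omega q_h \,\mathrm{d}x = 0$, the jump of $q_h$ coincides with that of $r_h$ because $s \in H^1(\Omega)$ is single-valued across interior faces, and controlling the mean of $\wt{q}_h$ through $\|q_h - \wt{q}_h\|_{L^2(\Omega)}$ before invoking Poincar\'e--Wirtinger is exactly the right move. The route, however, is genuinely different from the paper's: the paper gives no argument at all and simply cites Brenner's general Poincar\'e--Friedrichs inequality for piecewise $H^1$ functions, which already contains the statement as a special case once one notes that functions in $\wt{U}_h^m + H^1(\Omega)/\mb{R}$ have zero mean and that the jump seminorm appearing there is dominated by $\sum_{e \in \MEh^i} h_e^{-1}\|\jump{q_h}\|_{L^2(e)}^2$. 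What your argument buys is self-containedness and consistency with the machinery the paper actually develops: you reuse the conforming-enrichment Lemma~\ref{le_vdiff} (itself built on the Karakashian--Pascal interpolant) plus the classical Poincar\'e--Wirtinger inequality on the convex domain, so the lemma no longer rests on an external black box. What the citation buys is brevity and slightly greater generality, since Brenner's result applies to arbitrary piecewise $H^1$ functions rather than only to the sum space where the polynomial part admits the enrichment of Lemma~\ref{le_vdiff}. One cosmetic remark: the statement's space $\wt{U} + H^1(\Omega)/R$ is a typo for $\wt{U}_h^m + H^1(\Omega)/\mb{R}$, and your proof correctly targets the latter.
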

\begin{proof}
  We refer to \cite{Brenner2003poincare} for the proof.
\end{proof}

Now we are ready to state that the bilinear form $a_h^{\bmr{p}}(\cdot;
\cdot)$ is bounded and coercive with respect to energy norms
$\Unorm{\cdot}$ and $\pnorm{\cdot}$ for any positive $\eta$. 

\begin{lemma}
  For the bilinear form $a_h^{\bmr{p}}(\cdot; \cdot)$ with any $\eta >
  0$, there exists a positive constant $C$ such that 
  \begin{equation}
    |a_h^{\bmr{p}}(\bmr{U}_h, p_h; \bmr{V}_h, q_h)| \leq C \left(
    \Unorm{\bmr{U}_h}^2 + \pnorm{p_h}^2 \right)^{1/2}  \left(
    \Unorm{\bmr{V}_h}^2 + \pnorm{q_h}^2 \right)^{1/2},
    \label{eq_apboundedness}
  \end{equation}
  for any $\bmr{U}_h, \bmr{V}_h \in \bmr{I}_h^m + \bmr{I}^1(\Omega)$
  and any $p_h, q_h \in \wt{U}_h^m + H^1(\Omega) / \mb{R}$.
  \label{le_apboundedness}
\end{lemma}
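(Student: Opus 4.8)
Since \eqref{eq_apboundedness} is a pure continuity (boundedness) statement, the plan is to estimate each of the four groups of terms in \eqref{eq_apbilinear} separately by the Cauchy--Schwarz inequality and then recombine; no inverse, trace, or reconstruction properties are needed at this stage. The starting observation is that the two energy norms were defined exactly so as to dominate every quantity occurring in $a_h^{\bmr{p}}$: $\Unorm{\cdot}^2$ contains $\sum_{K\in\MTh}\|\nabla\cdot\bmr{V}_h\|_{L^2(K)}^2$, $\sum_{e\in\MEh^i}h_e^{-1}\|\jump{\un\otimes\bmr{V}_h}\|_{L^2(e)}^2$ and $\sum_{e\in\MEh^b}h_e^{-1}\|\un\times\bmr{V}_h\|_{L^2(e)}^2$, while $\pnorm{\cdot}^2$ contains $\sum_{K\in\MTh}\|\nabla q_h\|_{L^2(K)}^2$ and $\sum_{e\in\MEh^i}h_e^{-1}\|\jump{q_h}\|_{L^2(e)}^2$. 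So the work is just to line up each term of the bilinear form with these summands, and the final constant $C$ will depend only on $\nu$, $\eta$, and absolute constants.

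First I would treat the volume term. Cauchy--Schwarz in $L^2(K)$ followed by the discrete Cauchy--Schwarz over $\MTh$ gives
\[
\Bigl|\sum_{K\in\MTh}\int_K(-\nu\nabla\cdot\bmr{U}_h+\nabla p_h)(-\nu\nabla\cdot\bmr{V}_h+\nabla q_h)\,\d{x}\Bigr| \leq \Bigl(\sum_{K\in\MTh}\|-\nu\nabla\cdot\bmr{U}_h+\nabla p_h\|_{L^2(K)}^2\Bigr)^{1/2}\Bigl(\sum_{K\in\MTh}\|-\nu\nabla\cdot\bmr{V}_h+\nabla q_h\|_{L^2(K)}^2\Bigr)^{1/2},
\]
and then $\|-\nu\nabla\cdot\bmr{U}_h+\nabla p_h\|_{L^2(K)}^2\leq 2\nu^2\|\nabla\cdot\bmr{U}_h\|_{L^2(K)}^2+2\|\nabla p_h\|_{L^2(K)}^2$, which after summation over $K$ bounds the first factor by $C(\nu)(\Unorm{\bmr{U}_h}^2+\pnorm{p_h}^2)^{1/2}$ and, symmetrically, the second by $C(\nu)(\Unorm{\bmr{V}_h}^2+\pnorm{q_h}^2)^{1/2}$.

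For each of the three face sums I would split $\eta/h_e=(\eta^{1/2}h_e^{-1/2})(\eta^{1/2}h_e^{-1/2})$, apply Cauchy--Schwarz on each face $e$, and then the discrete Cauchy--Schwarz over $\MEh^i$ or $\MEh^b$. This yields, for the pressure-jump term, $|\sum_{e\in\MEh^i}\int_e\frac{\eta}{h_e}\jump{p_h}\cdot\jump{q_h}\,\d{s}|\leq\eta\,\pnorm{p_h}\,\pnorm{q_h}$, and in exactly the same manner the interior $\jump{\un\otimes\cdot}$ term and the boundary $\un\times\cdot$ term are each bounded by $\eta\,\Unorm{\bmr{U}_h}\,\Unorm{\bmr{V}_h}$; the only fact about the parameter that enters is $\eta>0$. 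Adding the four estimates and invoking the elementary inequality $\sum_i a_ib_i\leq(\sum_i a_i^2)^{1/2}(\sum_i b_i^2)^{1/2}$ --- with the $a_i$ the four controlling quantities built from $(\bmr{U}_h,p_h)$ and the $b_i$ those built from $(\bmr{V}_h,q_h)$ --- collapses everything into $C(\Unorm{\bmr{U}_h}^2+\pnorm{p_h}^2)^{1/2}(\Unorm{\bmr{V}_h}^2+\pnorm{q_h}^2)^{1/2}$.

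I do not expect a genuine obstacle in this lemma --- it is the easy companion to the coercivity estimate. The extension from the discrete spaces to $\bmr{I}_h^m+\bmr{I}^1(\Omega)$ and $\wt{U}_h^m+H^1(\Omega)/\mb{R}$ costs nothing, since the added $H^1$-conforming pieces have vanishing interior jumps and $a_h^{\bmr{p}}$ is linear in each argument. The one thing to watch is the bookkeeping of constants: one should check that $C$ is genuinely $h$-independent and carries the expected dependence on $\nu$ and $\eta$, and that the face terms are kept matched against the face-summands of the energy norms rather than against element $L^2$ norms, which at this stage would require the trace inequality M2 and is deliberately avoided.
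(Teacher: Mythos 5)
Your proposal is correct and follows essentially the same route as the paper: the paper's proof likewise normalizes $\nu$, bounds the volume residual by $C(\|\nabla\cdot\bmr{U}_h\|_{L^2(K)}^2+\|\nabla p_h\|_{L^2(K)}^2)$, and then applies Cauchy--Schwarz termwise to \eqref{eq_apbilinear}; your write-up simply makes the face-term bookkeeping and the $(\nu,\eta)$-dependence of $C$ explicit where the paper leaves them implicit.
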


\begin{proof}
  We prove for the case $\nu = 1$ and it is trivial to extend the
  proof to the case $\nu > 0$.  Obviously we have that 
  \begin{displaymath}
    \int_K \| -\nabla \cdot \bmr{U}_h + \nabla p_h \|^2 \d{x}
    \leq C \left( \int_K \| \nabla \cdot \bmr{U}_h \|^2 \d{x}
    + \int_K \|\nabla p_h \|^2 \d{x} \right), \quad \forall K \in
    \MTh, 
  \end{displaymath}
  and applying the Cauchy-Schwarz inequality to \eqref{eq_apbilinear}
  directly gives the estimate \eqref{eq_apboundedness}, which
  completes the proof.
\end{proof}

\begin{lemma}
  For the bilinear form $a_h^{\bmr{p}}(\cdot; \cdot)$ with any $\eta >
  0$, there exists a positive constant $C$ such that 
  \begin{equation}
    a_h^{\bmr{p}}(\bmr{U}_h, p_h; \bmr{U}_h, p_h) \geq C \left(
    \Unorm{\bmr{U}_h}^2 + \pnorm{p_h}^2 \right),
    \label{eq_apcoercive}
  \end{equation}
  for any $\bmr{U}_h \in \bmr{I}_h^m$ and any $p_h \in \wt{U}_h^m$.
  \label{le_apcoercive}
\end{lemma}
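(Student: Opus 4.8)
The face contributions are for free. Writing out the functional at the diagonal,
\[
a_h^{\bmr{p}}(\bmr{U}_h, p_h; \bmr{U}_h, p_h) = \sum_{K\in\MTh}\|-\nu\nabla\cdot\bmr{U}_h + \nabla p_h\|_{L^2(K)}^2 + \eta J, \qquad J := \sum_{e\in\MEh^i}\tfrac{\|\jump{p_h}\|_{L^2(e)}^2 + \|\jump{\un\otimes\bmr{U}_h}\|_{L^2(e)}^2}{h_e} + \sum_{e\in\MEh^b}\tfrac{\|\un\times\bmr{U}_h\|_{L^2(e)}^2}{h_e},
\]
and $J$ is exactly the interface part of $\Unorm{\bmr{U}_h}^2 + \pnorm{p_h}^2$. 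So the whole matter reduces to estimating the volume quantity $\|\nabla\cdot\bmr{U}_h\|_{L^2(\MTh)}^2 + \|\nabla p_h\|_{L^2(\MTh)}^2$ by $a_h^{\bmr{p}}(\bmr{U}_h, p_h; \bmr{U}_h, p_h)$.

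The plan is to pass to conforming companions. Let $\wt{\bmr{U}}_h\in(\wt{V}_h^m)^{d\times d}\cap H^1(\Omega)^{d\times d}$, with $\un\times\wt{\bmr{U}}_h = 0$ on $\partial\Omega$, be the function from Lemma~\ref{le_qinterpolate} applied to $\bmr{U}_h$, and let $\wt{p}_h\in\wt{V}_h^m\cap H^1(\Omega)$ be the one from Lemma~\ref{le_vdiff}; the $\alpha=1$ estimates give $\|\nabla\cdot(\bmr{U}_h-\wt{\bmr{U}}_h)\|_{L^2(\MTh)}^2 + \|\nabla(p_h-\wt{p}_h)\|_{L^2(\MTh)}^2 \le CJ$. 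Moreover, because every member of $\bmr{I}_h^m$ is elementwise irrotational and trace-free, $\nabla\times\wt{\bmr{U}}_h = \nabla\times(\wt{\bmr{U}}_h-\bmr{U}_h)$ and $\tr{\wt{\bmr{U}}_h} = \tr{\wt{\bmr{U}}_h-\bmr{U}_h}$ hold elementwise, so the ``defects'' obey $\|\nabla\times\wt{\bmr{U}}_h\|_{L^2(\Omega)}^2 + \|\tr{\wt{\bmr{U}}_h}\|_{L^2(\Omega)}^2 \le CJ$ by the estimates of Lemma~\ref{le_qinterpolate}. By the triangle inequality it then suffices to have a continuous coercivity estimate for $\wt{\bmr{U}}_h$ and $\wt{p}_h$.

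The continuous estimate I would invoke is the one behind first-order least-squares for Stokes: on the convex domain $\Omega$, for every $\bmr{V}\in H^1(\Omega)^{d\times d}$ with $\un\times\bmr{V} = 0$ on $\partial\Omega$ and every $q\in H^1(\Omega)/\mb{R}$,
\[
\|\nabla\cdot\bmr{V}\|_{L^2(\Omega)}^2 + \|\nabla q\|_{L^2(\Omega)}^2 \le C\Big(\|-\nu\nabla\cdot\bmr{V}+\nabla q\|_{L^2(\Omega)}^2 + \|\nabla\times\bmr{V}\|_{L^2(\Omega)}^2 + \|\tr{\bmr{V}}\|_{L^2(\Omega)}^2\Big).
\]
To prove it I would compare $(\bmr{V},q)$ with $(\nabla\bm\psi,q^\ast)$, where $(\bm\psi,q^\ast)\in\big(H^1_0(\Omega)^d\cap H^2(\Omega)^d\big)\times\big(L^2(\Omega)/\mb{R}\big)$ with $\nabla\cdot\bm\psi=0$ solves the auxiliary Stokes problem $-\nu\Delta\bm\psi+\nabla q^\ast = -\nu\nabla\cdot\bmr{V}+\nabla q$: the $H^2$-regularity of the Stokes operator on a convex polygon/polyhedron bounds $\|\Delta\bm\psi\|_{L^2}+\|\nabla q^\ast\|_{L^2}$ by the right-hand side, while the remainder $\bmr{V}-\nabla\bm\psi$ is irrotational up to $\nabla\times\bmr{V}$ and trace-free up to $\tr{\bmr{V}}$, so its divergence -- hence $\|\nabla\cdot\bmr{V}\|$ and, through the momentum residual, $\|\nabla q\|$ -- is controlled via a Helmholtz/regular decomposition on the convex domain. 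Feeding $\wt{\bmr{U}}_h$ and $\wt{p}_h$ into this estimate, using the defect bounds and the interpolation errors of the previous paragraph, and absorbing, yields $\sum_{K}\|-\nu\nabla\cdot\bmr{U}_h + \nabla p_h\|_{L^2(K)}^2 \ge c_0\big(\|\nabla\cdot\bmr{U}_h\|_{L^2(\MTh)}^2 + \|\nabla p_h\|_{L^2(\MTh)}^2\big) - C_1 J$ with $c_0,C_1>0$ independent of $h$ and $\eta$.

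Finally, to obtain \eqref{eq_apcoercive} for \emph{every} $\eta>0$ (the constant being allowed to depend on $\eta$), I would take a convex combination with the trivial bound $a_h^{\bmr{p}}(\bmr{U}_h, p_h; \bmr{U}_h, p_h)\ge\eta J$: for $\mu\in(0,1]$,
\[
a_h^{\bmr{p}}(\bmr{U}_h, p_h; \bmr{U}_h, p_h) \ge \mu c_0\big(\|\nabla\cdot\bmr{U}_h\|_{L^2(\MTh)}^2 + \|\nabla p_h\|_{L^2(\MTh)}^2\big) + (\eta-\mu C_1)J,
\]
and choosing $\mu$ small enough that $\eta-\mu C_1>0$ closes the argument. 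The main obstacle is the continuous estimate: it rests on the $H^2$-regularity of the Stokes system on the convex polytope $\Omega$ and on a decomposition result that reconciles the (only piecewise) irrotational, trace-free structure of $\bmr{I}_h^m$ with a globally conforming field; everything else is the triangle inequality, the interpolation lemmas of this section, and Cauchy--Schwarz.
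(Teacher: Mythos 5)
Your argument is essentially the paper's own proof: both split off the face terms, pass to conforming companions of $\bmr{U}_h$ and $p_h$ via Lemmas \ref{le_qinterpolate} and \ref{le_vdiff}, invoke the continuous first-order-system coercivity for Stokes (Helmholtz decomposition plus $H^2$ regularity on the convex domain, as in Cai et al.), use the fact that members of $\bmr{I}_h^m$ are piecewise curl-free and trace-free so that the curl and trace of the companion are pure defects controlled by the jumps, and then absorb (your convex-combination treatment of general $\eta$ is just the paper's scaling argument made explicit). The one imprecision is that the continuous estimate requires the $H^1(\Omega)$ norm of $\tr{\wt{\bmr{U}}_h}$ rather than its $L^2$ norm — the divergence constraint enters Stokes regularity through its gradient, and the paper's inequality \eqref{eq_VhqhStokes} accordingly carries $\|\tr{\bmr{V}_h}\|_{H^1(\Omega)}^2$ — but the needed bound $\|\tr{\wt{\bmr{U}}_h}\|_{H^1(\MTh)}^2 \leq C \|\wt{\bmr{U}}_h - \bmr{U}_h\|_{H^1(\MTh)}^2 \leq C J$ follows from the same $\alpha = 0,1$ estimates of Lemma \ref{le_qinterpolate} you already use, so nothing breaks.
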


\begin{proof}
  We prove for the case $\nu = 1$ and it is easy to extend the proof
  to other cases. For any $\bmr{U}_h \in \bmr{I}_h^m$, Lemma
  \ref{le_qinterpolate} implies that there exists a function
  $\bmr{V}_h \in H^1(\Omega)^{d \times d}$ such that $\un \times
  \bmr{V}_h = 0$ on $\partial \Omega$ and the estimate
  \eqref{eq_qinterpolate} holds. For any $p_h \in \wt{U}_h^m$, there
  exists a function $q_h \in H^1(\Omega)$ satisfying the estimate
  \eqref{eq_vdiff} by Lemma \ref{le_vdiff}. 

  Here we prove for the three-dimensional case.  Since $\un \times
  \bmr{V}_h  = 0$ on $\partial \Omega$ and the domain $\Omega$ is
  assumed to be a bounded convex polygon (polyhedron), we have the
  following Helmholtz decomposition \cite{Cai1997first}:  
  \begin{displaymath}
    \bmr{V}_h = \nabla \bm{q}^T + \nabla \times \bm{ \Psi },
  \end{displaymath}
  where $\bm{q} \in H_0^1(\Omega)^d \cap H^2(\Omega)^d$ is the
  solution of 
  \begin{equation}
    \Delta \bm{q} = \nabla \cdot \bmr{V}_h, \quad \text{in } \Omega,
    \quad \bm{q} = 0, \quad \text{on } \partial \Omega.
    \label{eq_bmqbmrVh}
  \end{equation}
  Since $q_h \in H^1(\Omega)$, the regularity of generalized Stokes
  problem \cite{Cai1997first, Kellogg1976regularity} provides that 
  \begin{equation}
    \| \Delta \bm{q} \|_{L^2(\Omega)}^2 + \| \nabla q_h
    \|_{L^2(\Omega)}^2 \leq C \left( \|- \Delta \bm{q} + \nabla
    q_h\|_{L^2(\Omega)}^2 + \| \nabla \cdot \bm{q} \|_{L^2(\Omega)}^2
    \right).
    \label{eq_regularityStokes}
  \end{equation}
  Together with \cite[Lemma 3.2]{Cai1997first} and the auxiliary
  problem \eqref{eq_bmqbmrVh}, we obtain 
  \begin{equation}
    \| \nabla \cdot \bmr{V}_h \|_{L^2(\Omega)}^2 + \| \nabla q_h
    \|_{L^2(\Omega)}^2 \leq C \left( \| -\nabla \cdot \bmr{V}_h +
    \nabla q_h \|_{L^2(\Omega)}^2 + \| \tr{\bmr{V}_h}
    \|_{H^1(\Omega)}^2 + \|\nabla \times \bmr{V}_h \|_{L^2(\Omega)}^2
    \right).
    \label{eq_VhqhStokes}
  \end{equation}
  We note that the inequality \eqref{eq_VhqhStokes} also holds in two
  dimensions and the proof is similar. 
  
  Then we are ready to establish the coercivity \eqref{eq_apcoercive}
  and we first take the parameter $\eta = 1$.  We have that 
  \begin{displaymath}
    \begin{aligned}
      \Unorm{\bmr{U}_h}^2 + \pnorm{p_h}^2  =  \sum_{K \in \MTh} 
      \|\nabla \cdot \bmr{U}_h & \|_{L^2(K)}^2 +   \sum_{K \in \MTh}
      \|\nabla p_h\|_{L^2(K)}^2 \\
      + \sum_{e \in \MEh^i} h_e^{-1} \| \jump{ \un \otimes \bmr{U}_h}
      \|_{L^2(e)}^2 & + \sum_{e \in \MEh^b} h_e^{-1} \| \un \times
      \bmr{V}_h \|_{L^2(e)}^2 + \sum_{e \in \MEh^i} h_e^{-1} \|
      \jump{q_h} \|_{L^2(e)}, \\
    \end{aligned}
  \end{displaymath}
  and
  \begin{displaymath}
    \begin{aligned}
      \| \nabla \cdot \bmr{U}_h \|_{L^2(\MTh)}^2 + \|\nabla p_h
      \|_{L^2(\MTh)}^2 & \leq C  \left( \| \nabla \cdot \bmr{V}_h
      \|_{L^2(\Omega)}^2 + \|\nabla q_h \|_{L^2(\Omega)}^2 \right) \\
      & + C \left(\| \nabla
      \cdot (\bmr{U}_h -  \bmr{V}_h) \|_{L^2(\MTh)}^2 + \|\nabla (p_h
      - q_h) \|_{L^2(\MTh)}^2 \right). \\
    \end{aligned}
  \end{displaymath}
  From \eqref{eq_VhqhStokes} and the above two inequalities, we
  arrive at 
  \begin{displaymath}
    \Unorm{\bmr{U}_h}^2 + \pnorm{p_h}^2 \leq C  \left(
    a_h^{\bmr{p}}(\bmr{U}_h, p_h; \bmr{U}_h, p_h) +  \| \nabla \cdot
    \bmr{V}_h \|_{L^2(\Omega)}^2 + \|\nabla q_h \|_{L^2(\Omega)}^2
    \right).
  \end{displaymath}
  Thus, it
  suffices to show that the right hand side of \eqref{eq_VhqhStokes}
  can be bounded by $a_h^{\bmr{p}}(\bmr{U}_h, p_h; \bmr{U}_h, p_h)$.
  We further deduce that 
  \begin{displaymath}
    \begin{aligned}
      \|- \nabla \cdot \bmr{V}_h + \nabla q_h \|_{L^2(\Omega)}^2 &
      \leq C \left(  \|- \nabla \cdot \bmr{U}_h + \nabla p_h
      \|_{L^2(\MTh)}^2 \right) \\ 
      & + C \left( \| \nabla \cdot (\bmr{U}_h -  \bmr{V}_h)
      \|_{L^2(\MTh)}^2 + \|\nabla (p_h - q_h) \|_{L^2(\MTh)}^2\right), 
    \end{aligned}
  \end{displaymath}
  and since $\bmr{U}_h \in \bmr{I}_h^m$, we have 
  \begin{displaymath}
    \begin{aligned}
      \|\tr{\bmr{V}_h} \|_{H^1(\Omega)}^2 + \|\nabla \times
      \bmr{V}_h\|_{L^2(\Omega)}^2 &= \| \tr{\bmr{V}_h -
      \bmr{U}_h } \|_{H^1(\MTh)}^2 + \| \nabla \times ( \bmr{V}_h -
      \bmr{U}_h) \|_{L^2(\MTh)}^2 \\
      & \leq C \|( \bmr{V}_h - \bmr{U}_h) \|_{H^1(\MTh)}^2.
    \end{aligned}
  \end{displaymath}
  Combining all inequalities above and the estimate
  \eqref{eq_wqwqdiff} and \eqref{eq_vdiff},  we conclude that 
  \begin{displaymath}
    \begin{aligned}
       \Unorm{\bmr{U}_h}^2 + \pnorm{p_h}^2 \leq C
       a_h^{\bmr{p}}(\bmr{U}_h, p_h; \bmr{U}_h, p_h). \\
     \end{aligned}
   \end{displaymath}
   By a scaling argument, we can obtain that for any positive parameter
   $\eta$ the coercivity \eqref{eq_apcoercive} holds true, which
   completes the proof.
\end{proof}

We have established the boundedness and coercivity of the bilinear
form $a_h^{\bmr{p}}(\cdot; \cdot)$, which implies that there exists a
unique solution the discrete problem \eqref{eq_bilinearp}. We state
the error estimate to the numerical approximations obtained by
\eqref{eq_bilinearp}. 

\begin{theorem}
  Let $(\bmr{U}, p) \in \bmr{I}^{m+1}(\Omega) \times H^{m + 1}(\Omega)
  / \mb{R}$ be the solution to the problem \eqref{eq_firstorderUp} and
  let $(\bmr{U}_h, p_h) \in \bmr{I}_h^m \times \wt{U}_h^m$ be the
  solution to the problem \eqref{eq_bilinearp}, there exists a
  constant $C$ such that 
  \begin{equation}
    \Unorm{\bmr{U} - \bmr{U}_h} + \pnorm{p - p_h} \leq Ch^m \left(
    \|\bmr{U}\|_{H^{m+1}(\Omega)} + \|p\|_{H^{m+1}(\Omega)} \right).
    \label{eq_perrorestimate}
  \end{equation}
  \label{th_perrorestimate}
\end{theorem}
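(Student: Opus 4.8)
The plan is the classical Céa-type argument for a discrete problem governed by a bilinear form that is both bounded (Lemma \ref{le_apboundedness}) and coercive (Lemma \ref{le_apcoercive}) on $\bmr I_h^m \times \wt U_h^m$ with respect to $\Unorm{\cdot}^2 + \pnorm{\cdot}^2$. I would proceed in three stages: first prove consistency of \eqref{eq_bilinearp} with the exact pair $(\bmr U, p)$ and hence Galerkin orthogonality; then derive a quasi-best-approximation estimate in the energy norm; and finally insert the reconstructed interpolants $\wh{\mc R}^m \bmr U$ and $\mc R^m p$ (suitably shifted to have zero mean) and invoke the approximation results of Section \ref{sec_space}.

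For consistency, the point to check is that every interior-face and boundary penalty term in \eqref{eq_apbilinear}, evaluated at $(\bmr U, p)$, reproduces the corresponding term in $l_h^{\bmr p}$. Since $\bmr U \in \bmr I^{m+1}(\Omega) \subset H^{m+1}(\Omega)^{d\times d}$ and $p \in H^{m+1}(\Omega)/\mb R$ are globally continuous, one has $\jump{\un \otimes \bmr U} = 0$ and $\jump{p} = 0$ on every $e \in \MEh^i$; the first equation of \eqref{eq_firstorderUp} gives $-\nu\nabla\cdot\bmr U + \nabla p = \bm f$ pointwise; and the boundary identity $\un\times\bmr U = \un\times\nabla\bm g$ matches the boundary term in $l_h^{\bmr p}$. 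Hence $a_h^{\bmr p}(\bmr U, p; \bmr V_h, q_h) = l_h^{\bmr p}(\bmr V_h, q_h)$ for all $(\bmr V_h, q_h) \in \bmr I_h^m \times \wt U_h^m$, and subtracting \eqref{eq_bilinearp} yields $a_h^{\bmr p}(\bmr U - \bmr U_h, p - p_h; \bmr V_h, q_h) = 0$ on the discrete space.

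Then, for an arbitrary $(\bmr W_h, r_h) \in \bmr I_h^m \times \wt U_h^m$, I would apply Lemma \ref{le_apcoercive} to $(\bmr U_h - \bmr W_h, p_h - r_h)$, use Galerkin orthogonality to swap $(\bmr U_h, p_h)$ for $(\bmr U, p)$, and then Lemma \ref{le_apboundedness}, to obtain
\begin{displaymath}
  \Unorm{\bmr U_h - \bmr W_h}^2 + \pnorm{p_h - r_h}^2 \le C\big(\Unorm{\bmr U - \bmr W_h}^2 + \pnorm{p - r_h}^2\big)^{1/2}\big(\Unorm{\bmr U_h - \bmr W_h}^2 + \pnorm{p_h - r_h}^2\big)^{1/2}.
\end{displaymath}
Cancelling a factor and using the triangle inequality gives $\Unorm{\bmr U - \bmr U_h} + \pnorm{p - p_h} \le C\inf_{(\bmr W_h, r_h)}\big(\Unorm{\bmr U - \bmr W_h} + \pnorm{p - r_h}\big)$. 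I would then choose $\bmr W_h = \wh{\mc R}^m \bmr U \in \bmr I_h^m$ and $r_h = \mc R^m p - |\Omega|^{-1}\int_\Omega \mc R^m p \in \wt U_h^m$. Note $\nabla r_h = \nabla\mc R^m p$ and $\jump{r_h} = \jump{\mc R^m p}$, so the mean shift is harmless; Theorem \ref{th_scalarapproximation} then bounds the volume and (via the trace estimate, $\jump p = 0$, $\sigma_v h_K \le h_e$) the face contributions of $\pnorm{p - r_h}$ by $C\Lambda_m h^m\|p\|_{H^{m+1}(\Omega)}$. Likewise, $\|\nabla\cdot(\bmr U - \wh{\mc R}^m\bmr U)\|_{L^2(K)} \le \|\bmr U - \wh{\mc R}^m\bmr U\|_{H^1(K)}$ and the interface/boundary terms of $\Unorm{\bmr U - \wh{\mc R}^m\bmr U}$, rewritten as interpolation errors using $\jump{\un\otimes\bmr U}=0$, are all controlled by Theorem \ref{th_tensorapproximation}, giving $C\Lambda_m h^m\|\bmr U\|_{H^{m+1}(\Omega)}$. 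Summation across elements uses the finite-overlap property of the patches $\{S(K)\}$; absorbing $\Lambda_m$ into $C$ yields \eqref{eq_perrorestimate}.

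The main obstacle is the consistency verification in stage one: one must check term by term that the weakly imposed continuity and boundary penalties in \eqref{eq_apbilinear} are exactly satisfied by $(\bmr U, p)$, which relies on the global $H^{m+1}$-regularity (hence vanishing jumps) and on $l_h^{\bmr p}$ carrying precisely the datum $\un\times\nabla\bm g$; the remaining bookkeeping — converting jump norms into local interpolation errors with the help of $\sigma_v h_K \le h_e$, the trace estimates, and the bounded overlap of $S(K)$ — is routine given the approximation theorems already established.
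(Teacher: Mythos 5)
Your proposal is correct and follows essentially the same route as the paper: consistency of the scheme for the exact pair $(\bmr U,p)$, stability via Lemmas \ref{le_apboundedness} and \ref{le_apcoercive}, and the approximation estimates of Theorems \ref{th_scalarapproximation} and \ref{th_tensorapproximation} applied to the reconstructed interpolants $\wh{\mc R}^m\bmr U$ and $\mc R^m p$. The only difference is presentational: the paper exploits the minimization property $J_h^{\bmr p}(\bmr U_h,p_h)\le J_h^{\bmr p}(\bmr V_h,q_h)$ together with the identity $J_h^{\bmr p}(\bmr V_h,q_h)=a_h^{\bmr p}(\bmr U-\bmr V_h,p-q_h;\bmr U-\bmr V_h,p-q_h)$, whereas you phrase the same quasi-optimality through Galerkin orthogonality and a C\'ea argument, which as a bonus applies the coercivity lemma only to discrete differences (strictly within its stated hypotheses, unlike the paper's application to $\bmr U-\bmr U_h$) and handles the zero-mean normalization of $\mc R^m p$ that the paper glosses over.
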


\begin{proof}
  For the exact solution $(\bmr{U}, p)$, the jump term vanishes on 
  interior faces, that is
  \begin{displaymath}
    \jump{\un \times \bmr{U}} = 0, \quad \jump{p} = 0, \quad \text{on
    any } e \in \MEh^i.
  \end{displaymath}
  Hence, for any $(\bmr{V}_h, q_h) \in \bmr{I}_h^m \times \wt{U}_h^m$
  we have that 
  \begin{displaymath}
    J_h^{\bmr{p}} (\bmr{V}_h, q_h) = a_h^{\bmr{p}}(\bmr{U} -
    \bmr{V}_h, p - q_h; \bmr{U} - \bmr{V}_h, p - q_h).
  \end{displaymath}
  We let $\bmr{V}_h = \wh{\mc{R}}^m \bmr{U}$ and $q_h = \mc{R}^m p$,
  and together with the coercivity \eqref{eq_apcoercive} and the
  boundedness \eqref{eq_apboundedness}, we obtain that 
  \begin{displaymath}
    \begin{aligned}
      \Unorm{\bmr{U} - \bmr{U}_h} + \pnorm{p - p_h} & \leq C
      a_h^{\bmr{p}}(\bmr{U} - \bmr{U}_h, p - p_h; \bmr{U} - \bmr{U}_h,
      p - p_h)^{1/2} = C   J_h^{\bmr{p}} (\bmr{U}_h, p_h)^{1/2} \\
      & \leq C  J_h^{\bmr{p}} (\bmr{V}_h, q_h)^{1/2} \leq C
      a_h^{\bmr{p}}(\bmr{U} - \bmr{V}_h, p - q_h; \bmr{U} - \bmr{V}_h,
      p - q_h)^{1/2} \\
      & \leq C \left(  \Unorm{\bmr{U} - \bmr{V}_h} + \pnorm{p - q_h}
      \right).\\ 
    \end{aligned}
  \end{displaymath}
  Applying the approximation estimates \eqref{eq_tensorapproximation}
  and \eqref{eq_scalarapproximation} and the trace estimate, it is
  trivial to obtain 
  \begin{displaymath}
    \Unorm{\bmr{U} - \bmr{V}_h} \leq C h^m \|\bmr{U}
    \|_{H^{m+1}(\Omega)}, \quad \pnorm{p - q_h} \leq C h^m \|p
    \|_{H^{m+1}(\Omega)}, 
  \end{displaymath}
  which completes the proof.
\end{proof}

The error estimate of the numerical approximation $\bmr{U}_h$ to the
gradient with solving the minimization problem \eqref{eq_infJp} has
been established. Now let us consider another first-order system
to solve the velocity
$\bm{u}$:
\begin{equation}
  \begin{aligned}
    \nabla \bm{u} - \bmr{U} = \bm{0}, & \quad \text{in } \Omega, \\
    \bm{u} = \bm{g}, & \quad \text{on } \partial \Omega. \\
  \end{aligned}
  \label{eq_firstorderu}
\end{equation}
We define the least squares functional $J_h^{\bmr{u}}(\cdot)$ for
solving \eqref{eq_firstorderu}:
\begin{equation}
  J_h^{\bmr{u}}(\bm{v}_h) := \sum_{K \in \MTh} \| \nabla \bm{v}_h -
  \bmr{U}_h \|_{L^2(K)}^2 + \sum_{e \in \MEh^i} \frac{\mu}{h_e} \|
  \jump{\un \otimes \bm{v}_h} \|_{L^2(e)}^2 + \sum_{e \in \MEh^b}
  \frac{\mu}{h_e} \| \bm{v}_h - \bm{g} \|_{L^2(e)}^2,
  \label{eq_infJu}
\end{equation}
where $\mu$ is a positive parameter.  It is noticeable that in
\eqref{eq_infJu} the first term contains the numerical approximation
$\bmr{U}_h$ since the exact gradient is unavailable to us.  We
minimize the functional $J_h^{\bmr{u}}(\cdot)$ in the piecewise
divergence-free polynomial space $\bmr{S}_h^m$ to seek a numerical
solution. The piecewise divergence-free property provides a local mass
conservation, which is very desirable in solving the incompressible
fluid flow problem \cite{Heys2006mass}. The minimization problem is
given by 
\begin{equation}
  \bm{u}_h = \mathop{\arg \min}_{\bm{v}_h \in \bmr{S}_h^m}
  J_h^{\bmr{u}}(\bm{v}_h).
  \label{eq_functionalJu}
\end{equation}
We also write its Euler-Lagrange equation to solve
\eqref{eq_functionalJu}. Thus, the corresponding discrete variational
problem is defined as to find $\bm{u}_h \in \bmr{S}_h^m$ such that 
\begin{equation}
  a_h^{\bmr{u}}(\bm{u}_h, \bm{v}_h) = l_h^{\bmr{u}}(\bm{v}_h), \quad
  \forall \bm{v}_h \in \bmr{S}_h^m,
  \label{eq_bilinearu}
\end{equation}
where the bilinear form $a_h^{\bmr{u}}(\cdot, \cdot)$ is 
\begin{displaymath}
  a_h^{\bmr{u}}(\bm{u}_h, \bm{v}_h) = \sum_{K \in \MTh} \int_K \nabla
  \bm{u}_h : \nabla \bm{v}_h \d{x} + \sum_{e \in \MEh^i} \int_e
  \frac{\mu}{h_e} \jump{\un \otimes \bm{u}_h} : \jump{\un \otimes
  \bm{v}_h} \d{s} + \sum_{e \in \MEh^b} \int_e \frac{\mu}{h_e}
  \bm{u}_h \cdot \bm{v}_h \d{s},
\end{displaymath}
and the linear form $l_h^{\bmr{u}}(\cdot)$ is 
\begin{displaymath}
  l_h^{\bmr{u}}(\bm{v}_h) = \sum_{K \in \MTh} \int_K \nabla \bm{v}_h :
  \bmr{U}_h \d{x} + \sum_{e \in \MEh^b} \int_e \frac{\mu}{h_e}
  \bm{v}_h \cdot \bm{g} \d{s},
\end{displaymath}
Then we derive the error estimate of the numerical solution to
\eqref{eq_infJu}. We introduce an energy norm $\unorm{\cdot}$ which is
defined as 
\begin{displaymath}
  \unorm{\bm{v}_h}^2 := \sum_{K \in \MTh} \| \nabla \bm{v}_h
  \|_{L^2(K)}^2 + \sum_{e \in \MEh^i} h_e^{-1} \| \jump{\un \otimes
  \bm{v}_h} \|_{L^2(e)}^2 + \sum_{e \in \MEh^b} h_e^{-1} \| \bm{v}_h
  \|_{L^2(e)}^2,
\end{displaymath}
for any $\bm{v}_h \in H^1(\Omega)^d + \bmr{S}_h^m$. We state the
following lemma to give a bound for the norm $\unorm{\cdot}$.
\begin{lemma}
  There exists a positive constant $C$ such that 
  \begin{equation}
    \|\bm{v}_h \|_{L^2(\Omega)} \leq C \unorm{\bm{v}_h},
    \label{eq_unormbound}
  \end{equation}
  for any $\bm{v}_h \in H^1(\Omega)^d  + \bmr{S}_h^m$.
  \label{le_unormbound}
\end{lemma}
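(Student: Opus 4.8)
The inequality \eqref{eq_unormbound} is a broken Poincar\'e--Friedrichs inequality, so the plan is to reduce it to the scalar, piecewise-$H^1$ version already invoked for Lemma~\ref{le_pnormblund}. First I would note that every $\bm{v}_h \in H^1(\Omega)^d + \bmr{S}_h^m$ is piecewise $H^1$ with respect to $\MTh$, since the functions in $\bmr{S}_h^m$ are piecewise polynomials on $\MTh$. Writing $\bm{v}_h = (v_h^1,\dots,v_h^d)$, the estimate can be proved componentwise: on an interior face $e$ one has $\|\jump{v_h^i}\|_{L^2(e)} \le \|\jump{\un\otimes\bm{v}_h}\|_{L^2(e)}$ (indeed the two sides are even equal after summing over $i$, because $\jump{\un\otimes\bm{v}_h} = \un^+\otimes(\bm{v}_h^+-\bm{v}_h^-)$ has Frobenius norm $|\bm{v}_h^+-\bm{v}_h^-|$), and on a boundary face $|v_h^i|\le|\bm{v}_h|$; also $\|\nabla v_h^i\|_{L^2(K)}\le\|\nabla\bm{v}_h\|_{L^2(K)}$. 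Hence it suffices to show, for any scalar $v\in H^1(\MTh)$,
\begin{displaymath}
  \|v\|_{L^2(\Omega)}^2 \le C\Big( \sum_{K\in\MTh}\|\nabla v\|_{L^2(K)}^2 + \sum_{e\in\MEh^i} h_e^{-1}\|\jump{v}\|_{L^2(e)}^2 + \sum_{e\in\MEh^b} h_e^{-1}\|v\|_{L^2(e)}^2 \Big),
\end{displaymath}
and then sum over $i$ and recall the definition of $\unorm{\cdot}$.

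The displayed scalar inequality is exactly a Poincar\'e--Friedrichs inequality for piecewise $H^1$ functions of the type established in \cite{Brenner2003poincare}: there the jump contributions on the right-hand side carry weights $h_e^{-1}$ (or even weaker weights, since on boundary faces $\jump{v}=v\un$), and because $h_e\le h$ is bounded these are the largest admissible weights, so $\unorm{\cdot}$ dominates the norm appearing there. The constant produced by this argument depends only on $\Omega$ and the shape regularity of $\MTh$, hence is independent of $h$, which gives \eqref{eq_unormbound}.

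Alternatively, one can give a self-contained argument that mirrors Lemmas~\ref{le_vhwtvh}--\ref{le_qinterpolate}: first pass, via the subdecomposition $\wt{\mc{T}}_h$ and a Karakashian--Pascal type estimate \cite{Karakashian2007convergence}, to a conforming function, and then set all degrees of freedom at boundary Lagrange nodes to zero -- this is the full-trace analogue of the tangential-trace construction in the proof of Lemma~\ref{le_wqwqinterpolate}, and is in fact simpler, since here no splitting of $\mc{N}_b$ and $\mc{N}_v$ is needed. This yields $\wt{\bm{v}}_h \in H^1(\Omega)^d$ with $\wt{\bm{v}}_h=\bm{0}$ on $\partial\Omega$ and
\begin{displaymath}
  \sum_{K\in\MTh}\|\nabla^\alpha(\bm{v}_h - \wt{\bm{v}}_h)\|_{L^2(K)}^2 \le C\Big( \sum_{e\in\MEh^i} h_e^{1-2\alpha}\|\jump{\un\otimes\bm{v}_h}\|_{L^2(e)}^2 + \sum_{e\in\MEh^b} h_e^{1-2\alpha}\|\bm{v}_h\|_{L^2(e)}^2 \Big), \quad \alpha=0,1.
\end{displaymath}
One then writes $\|\bm{v}_h\|_{L^2(\Omega)} \le \|\bm{v}_h-\wt{\bm{v}}_h\|_{L^2(\Omega)} + \|\wt{\bm{v}}_h\|_{L^2(\Omega)}$, bounds the first term by the case $\alpha=0$, bounds $\|\wt{\bm{v}}_h\|_{L^2(\Omega)}\le C\|\nabla\wt{\bm{v}}_h\|_{L^2(\Omega)}$ by the classical Poincar\'e--Friedrichs inequality (legitimate because $\wt{\bm{v}}_h$ vanishes on $\partial\Omega$), and controls $\|\nabla\wt{\bm{v}}_h\|_{L^2(\Omega)}\le\|\nabla(\bm{v}_h-\wt{\bm{v}}_h)\|_{L^2(\MTh)}+\|\nabla\bm{v}_h\|_{L^2(\MTh)}$ via the case $\alpha=1$; collecting terms gives \eqref{eq_unormbound}.

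The only genuine obstacle is the enrichment step in the second route, namely producing the conforming, boundary-vanishing $\wt{\bm{v}}_h$ together with its stability estimate when $\bm{v}_h$ is merely piecewise $H^1$ rather than piecewise polynomial; this is handled exactly as in the proofs of Lemmas~\ref{le_wqwqinterpolate} and \ref{le_qinterpolate}, with the bookkeeping over boundary Lagrange nodes, the trace inequality M2, and scaling arguments. Everything else is the triangle inequality. For the write-up I would favour the first route, which simply cites \cite{Brenner2003poincare} componentwise, consistently with the treatment of Lemma~\ref{le_pnormblund}.
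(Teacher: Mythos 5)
Your proposal is correct and, in its preferred form, coincides with the paper's treatment: the paper proves Lemma \ref{le_unormbound} simply by citing the broken Poincar\'e--Friedrichs inequality of \cite{Brenner2003poincare} (together with \cite{arnold1982interior}), exactly as you do componentwise in your first route. The additional detail you supply (the identity $|\jump{\un\otimes\bm{v}_h}| = |\bm{v}_h^+-\bm{v}_h^-|$ justifying the componentwise reduction, and the alternative enrichment argument) is sound but goes beyond what the paper records.
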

\begin{proof}
  We refer to \cite{Brenner2003poincare, arnold1982interior} for the
  proof.
\end{proof}
By the definition of the bilinear form $a_h^{\bmr{u}}(\cdot,
\cdot)$, it is easy to find that for any $\mu > 0$, there exist
constants $C$ such that 
\begin{displaymath}
  \begin{aligned}
    |a_h^{\bmr{u}}(\bm{u}_h, \bm{v}_h)| & \leq C \unorm{\bm{u}_h}
    \unorm{\bm{v}_h}, && \forall \bm{u}_h, \bm{v}_h \in
    \bmr{S}_h^m + H^1(\Omega)^d, \\
    |a_h^{\bmr{u}}(\bm{v}_h, \bm{v}_h)| & \geq C \unorm{\bm{v}_h}^2,
    && \forall \bm{u}_h \in \bmr{S}_h^m,  \\
  \end{aligned}
\end{displaymath}
which implies there exists a unique solution to the problem
\eqref{eq_bilinearu}. Finally, we present the error estimate of the
numerical solution in approximation to the velocity $\bm{u}$.
\begin{theorem}
  Let $\bm{u} \in \bmr{S}^{m+1}(\Omega)$ be the solution to
  \eqref{eq_firstorderStokes} and let $\bm{u}_h \in \bmr{S}_h^m$ be
  the solution to \eqref{eq_bilinearu}, there exists a positive
  constant $C$ such that
  \begin{equation}
    \unorm{\bm{u} - \bm{u}_h} \leq C \left( h^m \|\bm{u}
    \|_{H^{m+1}(\Omega)} + \|\bmr{U} - \bmr{U}_h \|_{L^2(\Omega)}
    \right),
    \label{eq_uerrorestimate}
  \end{equation}
  where $\bmr{U}_h$ is the numerical solution in \eqref{eq_infJp}.
  \label{th_uerrorestimate}
\end{theorem}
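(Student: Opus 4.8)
\emph{Proof proposal.} The plan is a Céa-type argument for the perturbed variational problem \eqref{eq_bilinearu}. First I would record the equation satisfied by the exact velocity $\bm{u}$. Since $\bm{u}\in\bmr{S}^{m+1}(\Omega)$ obeys $\nabla\bm{u}=\bmr{U}$ in $\Omega$, is continuous across every interior face (so that $\jump{\un\otimes\bm{u}}=0$ on $\MEh^i$), and equals $\bm{g}$ on $\partial\Omega$, substituting $\bm{u}$ into the bilinear form gives, for every $\bm{v}_h\in\bmr{S}_h^m$,
\[
  a_h^{\bmr{u}}(\bm{u},\bm{v}_h)=\sum_{K\in\MTh}\int_K\bmr{U}:\nabla\bm{v}_h\,\d{x}+\sum_{e\in\MEh^b}\int_e\frac{\mu}{h_e}\bm{g}\cdot\bm{v}_h\,\d{s}=l_h^{\bmr{u}}(\bm{v}_h)+\sum_{K\in\MTh}\int_K(\bmr{U}-\bmr{U}_h):\nabla\bm{v}_h\,\d{x}.
\]
Subtracting \eqref{eq_bilinearu} yields the perturbed Galerkin relation $a_h^{\bmr{u}}(\bm{u}-\bm{u}_h,\bm{v}_h)=\sum_{K\in\MTh}\int_K(\bmr{U}-\bmr{U}_h):\nabla\bm{v}_h\,\d{x}$ for all $\bm{v}_h\in\bmr{S}_h^m$, whose right-hand side is bounded by $\|\bmr{U}-\bmr{U}_h\|_{L^2(\Omega)}\,\unorm{\bm{v}_h}$ by the Cauchy--Schwarz inequality.

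Then, for an arbitrary $\bm{w}_h\in\bmr{S}_h^m$, I would apply the coercivity of $a_h^{\bmr{u}}(\cdot,\cdot)$ to $\bm{u}_h-\bm{w}_h\in\bmr{S}_h^m$, split $a_h^{\bmr{u}}(\bm{u}_h-\bm{w}_h,\bm{u}_h-\bm{w}_h)=a_h^{\bmr{u}}(\bm{u}_h-\bm{u},\bm{u}_h-\bm{w}_h)+a_h^{\bmr{u}}(\bm{u}-\bm{w}_h,\bm{u}_h-\bm{w}_h)$, estimate the first summand by the perturbed Galerkin relation (taking $\bm{v}_h=\bm{u}_h-\bm{w}_h$) and the second by the continuity of $a_h^{\bmr{u}}$ on $\bmr{S}_h^m+H^1(\Omega)^d$, and cancel one power of $\unorm{\bm{u}_h-\bm{w}_h}$ to obtain $\unorm{\bm{u}_h-\bm{w}_h}\le C\big(\unorm{\bm{u}-\bm{w}_h}+\|\bmr{U}-\bmr{U}_h\|_{L^2(\Omega)}\big)$. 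A triangle inequality then gives $\unorm{\bm{u}-\bm{u}_h}\le C\big(\unorm{\bm{u}-\bm{w}_h}+\|\bmr{U}-\bmr{U}_h\|_{L^2(\Omega)}\big)$ for every $\bm{w}_h\in\bmr{S}_h^m$.

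Next I would pick the concrete approximant $\bm{w}_h=\wt{\mc{R}}^m\bm{u}\in\bmr{S}_h^m$, which is admissible because $\bm{u}\in\bmr{S}^{m+1}(\Omega)$, and estimate each term of $\unorm{\bm{u}-\wt{\mc{R}}^m\bm{u}}$ using Theorem \ref{th_vectorapproximation}. The volume term $\sum_{K\in\MTh}\|\nabla(\bm{u}-\wt{\mc{R}}^m\bm{u})\|_{L^2(K)}^2$ is directly of order $h^{2m}\|\bm{u}\|_{H^{m+1}(\Omega)}^2$. For the interior face terms I would use the continuity of $\bm{u}$ to see that $\jump{\un\otimes(\bm{u}-\wt{\mc{R}}^m\bm{u})}$ on a face shared by $K^\pm$ equals $\un^+\otimes\big((\wt{\mc{R}}^m\bm{u})^- - (\wt{\mc{R}}^m\bm{u})^+\big)$, hence is controlled by the two one-sided trace errors $\|(\bm{u}-\wt{\mc{R}}^m\bm{u})\|_{L^2(\partial K^\pm)}$; the boundary face terms $\|\bm{u}-\wt{\mc{R}}^m\bm{u}\|_{L^2(e)}$ are handled the same way. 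Multiplying by $h_e^{-1}$, invoking the trace estimate in \eqref{eq_vectorapproximation}, and summing over elements and faces with the help of mesh regularity M1 and the finite overlap of the element patches again gives order $h^{2m}\|\bm{u}\|_{H^{m+1}(\Omega)}^2$, so $\unorm{\bm{u}-\wt{\mc{R}}^m\bm{u}}\le Ch^m\|\bm{u}\|_{H^{m+1}(\Omega)}$. Combining this with the previous paragraph yields \eqref{eq_uerrorestimate}.

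There is no deep obstacle; the only point requiring care is that the least squares functional \eqref{eq_infJu} is built from the computed gradient $\bmr{U}_h$ rather than the exact $\bmr{U}$, so the first-step error enters the estimate — but it enters only linearly and only through the perturbation term $\sum_{K\in\MTh}\int_K(\bmr{U}-\bmr{U}_h):\nabla\bm{v}_h$, which is precisely the additive $\|\bmr{U}-\bmr{U}_h\|_{L^2(\Omega)}$ contribution in \eqref{eq_uerrorestimate}. The remaining work — in particular the bookkeeping for the jump terms in $\unorm{\bm{u}-\wt{\mc{R}}^m\bm{u}}$ — is routine given the approximation properties already established.
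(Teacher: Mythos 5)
Your argument is correct, and it reaches \eqref{eq_uerrorestimate} by a genuinely different route from the paper. The paper never writes down a Galerkin orthogonality relation: it works directly with the least squares functional, first bounding $\unorm{\bm{u}-\bm{u}_h}^2 \leq C\bigl(J_h^{\bmr{u}}(\bm{u}_h) + \|\bmr{U}-\bmr{U}_h\|_{L^2(\Omega)}^2\bigr)$ (inserting $\nabla\bm{u}=\bmr{U}$ and $\bmr{U}_h$ into the gradient term, and using $\jump{\un\otimes\bm{u}}=0$, $\bm{u}=\bm{g}$ to identify the face terms of the norm with those of the functional), then invoking the minimization property $J_h^{\bmr{u}}(\bm{u}_h)\leq J_h^{\bmr{u}}(\wt{\mc{R}}^m\bm{u})$, and finally converting $J_h^{\bmr{u}}(\wt{\mc{R}}^m\bm{u})$ back into $\bigl(\unorm{\bm{u}-\wt{\mc{R}}^m\bm{u}}+\|\bmr{U}-\bmr{U}_h\|_{L^2(\Omega)}\bigr)^2$. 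You instead pass to the Euler--Lagrange equation \eqref{eq_bilinearu}, derive the perturbed orthogonality $a_h^{\bmr{u}}(\bm{u}-\bm{u}_h,\bm{v}_h)=\sum_{K}\int_K(\bmr{U}-\bmr{U}_h):\nabla\bm{v}_h\,\d{x}$, and run a first-Strang-lemma argument with the coercivity and continuity of $a_h^{\bmr{u}}$; this is legitimate since coercivity holds with constant $\min(1,\mu)$ on all of $\bmr{S}_h^m+H^1(\Omega)^d$ and you only apply it to $\bm{u}_h-\bm{w}_h\in\bmr{S}_h^m$. The two templates are of course equivalent in substance (minimizing the convex quadratic $J_h^{\bmr{u}}$ is the same as solving its Euler--Lagrange equation), but the paper's version is shorter because it never needs the bilinear-form machinery, while yours isolates the effect of replacing $\bmr{U}$ by $\bmr{U}_h$ as a single explicit consistency residual and yields the quasi-optimality bound $\unorm{\bm{u}-\bm{u}_h}\leq C\bigl(\inf_{\bm{w}_h\in\bmr{S}_h^m}\unorm{\bm{u}-\bm{w}_h}+\|\bmr{U}-\bmr{U}_h\|_{L^2(\Omega)}\bigr)$ for an arbitrary competitor before any interpolant is chosen. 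Your closing estimate $\unorm{\bm{u}-\wt{\mc{R}}^m\bm{u}}\leq Ch^m\|\bm{u}\|_{H^{m+1}(\Omega)}$, including the treatment of the jump terms via one-sided traces, \eqref{eq_vectorapproximation}, and M1, matches what the paper asserts in its last line.
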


\begin{proof}
  Clearly the trace $\jump{\un \otimes \bm{u}} = 0$ on any interior
  faces since $\bm{u}$ is smooth. We let $\bm{v}_h = \wt{\mc{R}}^m
  \bm{u}$ be the interpolant of $\bm{u}$, and we obtain that 
  \begin{displaymath}
    \begin{aligned}
      \unorm{\bm{u} - \bm{u}_h}^2 &=  \sum_{K \in \MTh} \| \nabla
      \bm{u} - \nabla \bm{u}_h \|_{L^2(K)}^2 + \sum_{e \in \MEh^i}
      h_e^{-1} \| \jump{\un \otimes (\bm{u} - \bm{u}_h) }
      \|_{L^2(e)}^2 + \sum_{e \in \MEh^b} h_e^{-1} \| \bm{u} -
      \bm{u}_h \|_{L^2(e)}^2 \\
      & \leq C \left(  \sum_{K \in \MTh} \| \bmr{U}_h - \nabla
      {\bm{u}}_h \|_{L^2(K)}^2 +   \sum_{K \in \MTh} \| \bmr{U} -
      \bmr{U}_h \|_{L^2(K)}^2\right) \\
      & \hspace{60pt} + \sum_{e \in \MEh^i} h_e^{-1} \| \jump{\un
      \otimes (\bm{u} - \bm{u}_h) } \|_{L^2(e)}^2 + \sum_{e \in
      \MEh^b} h_e^{-1} \| \bm{u} - \bm{u}_h \|_{L^2(e)}^2 \\
      & \leq C \left( J_h^{\bmr{u}}(\bm{u}_h) + \|\bmr{U} - \bmr{U}_h
      \|_{L^2(\Omega)}^2 \right)  \leq C \left(
      J_h^{\bmr{u}}(\bm{v}_h) +   \|\bmr{U} - \bmr{U}_h
      \|_{L^2(\Omega)}^2 \right) \\
      & \leq C \left( \unorm{\bm{u} - \bm{v}_h} + \| \bmr{U} -
      \bmr{U}_h \|_{L^2(\Omega)} \right)^2 \\
      & \leq C \left( h^m \| \bm{u} \|_{H^{m+1}(\Omega)} + \| \bmr{U}
      - \bmr{U}_h \|_{L^2(\Omega)} \right)^2. \\
    \end{aligned}
  \end{displaymath}
  The last inequality follows from the approximation property
  \eqref{eq_vectorapproximation} and the trace estimate M2, which
  completes the proof.
\end{proof}

\section{Numerical Results}
\label{sec_numerical_results}
In this section, we present a series of numerical experiments to
demonstrate the accuracy of our method in both two dimensions and
three dimensions. 
We take the accuracy order as $1 \leq m \leq 3$ and for different $m$
we list the values $\# S$ that are used in numerical experiments in
Tab.~\ref{tab_numSK}. For all test problems, the Reynolds number $\Re$
and the parameters $\eta$ and $\mu$ are chosen to be $1$. 

\subsection{2D Example} 

\begin{table}
  \centering
  \renewcommand\arraystretch{1.3}
  \begin{tabular}{p{1.0cm}| p{1.5cm}|p{1.2cm}|p{1.2cm}|p{1.2cm} }
    \hline\hline
    \multirow{2}{*}{$d = 2$} &  $m$ & 1 & 2 & 3 \\
    \cline{2-5}
     & $\# S$ & 5 & 10 & 15 \\
    \hline
    \multirow{2}{*}{$d = 3$} &  $m$ & 1 & 2 & 3 \\
    \cline{2-5}
     & $\# S$ & 8 & 18 & 36 \\
     \hline \hline
  \end{tabular}
  \caption{$\# S$ for $1 \leq m \leq 3$.}
  \label{tab_numSK}
\end{table}

\paragraph{\textbf{Example 1}.} We solve the Stokes problem
\eqref{eq_Stokes} on the domain $\Omega = (0, 1)^2$ with the
analytical solution
\begin{displaymath}
  \bm{u}(x, y) = \begin{bmatrix}
    \sin(2 \pi x) \cos(2 \pi y) \\
    -\cos(2 \pi x) \sin(2 \pi y) \\
  \end{bmatrix}, \quad p(x, y) = x^2 + y^2 - \frac{2}{3},
\end{displaymath}
to show the convergence rates of our method. The source term $\bm{f}$
and the Dirichlet data $\bm{g}$ are taken accordingly. We employ a
series of triangular meshes with mesh size $h = 1/10, 1/20, \ldots,
1/160$, see Fig.~\ref{fig_triangulation}. The numerical results are
displayed in Fig.~\ref{fig_ex1qpnorm} and Fig.~\ref{fig_ex1unorm}. For
the first part \eqref{eq_firstorderUp}, we plot the numerical error
under energy norm $\Unorm{\bmr{U} - \bmr{U}_h} + \pnorm{p - p_h}$ in
Fig.~\ref{fig_ex1qpnorm}, which approaches zero at the speed
$O(h^m)$.  The convergence rates are consistent with the theoretical
analysis \eqref{eq_perrorestimate}. For $L^2$ error, we also plot the
errors $\|\bmr{U} - \bmr{U}_h \|_{L^2(\Omega)}$ and $\| p - p_h
\|_{L^2(\Omega)}$ in Fig.~\ref{fig_ex1qpnorm}, and we numerically
detect the odd/even situation. For odd $m$, the $L^2$ errors converge
to zero at the optimal speed, and for even $m$, the errors have a
sub-optimal convergence rate. For the second system
\eqref{eq_firstorderu}, we show the numerical results in
Fig.~\ref{fig_ex1unorm}. Clearly, the error under energy norm
$\unorm{\cdot}$ converges to zero with the rate $O(h^m)$ as the mesh
size approaches $0$. For the $L^2$ norm, we also observe the optimal
rate and sub-optimal rate for odd $m$ and even $m$, respectively. We
note that the convergence orders under all error measurements are in
perfect agreement with our theoretical error estimates.

\begin{figure}[!htp]
  \centering
  \includegraphics[width=0.3\textwidth]{./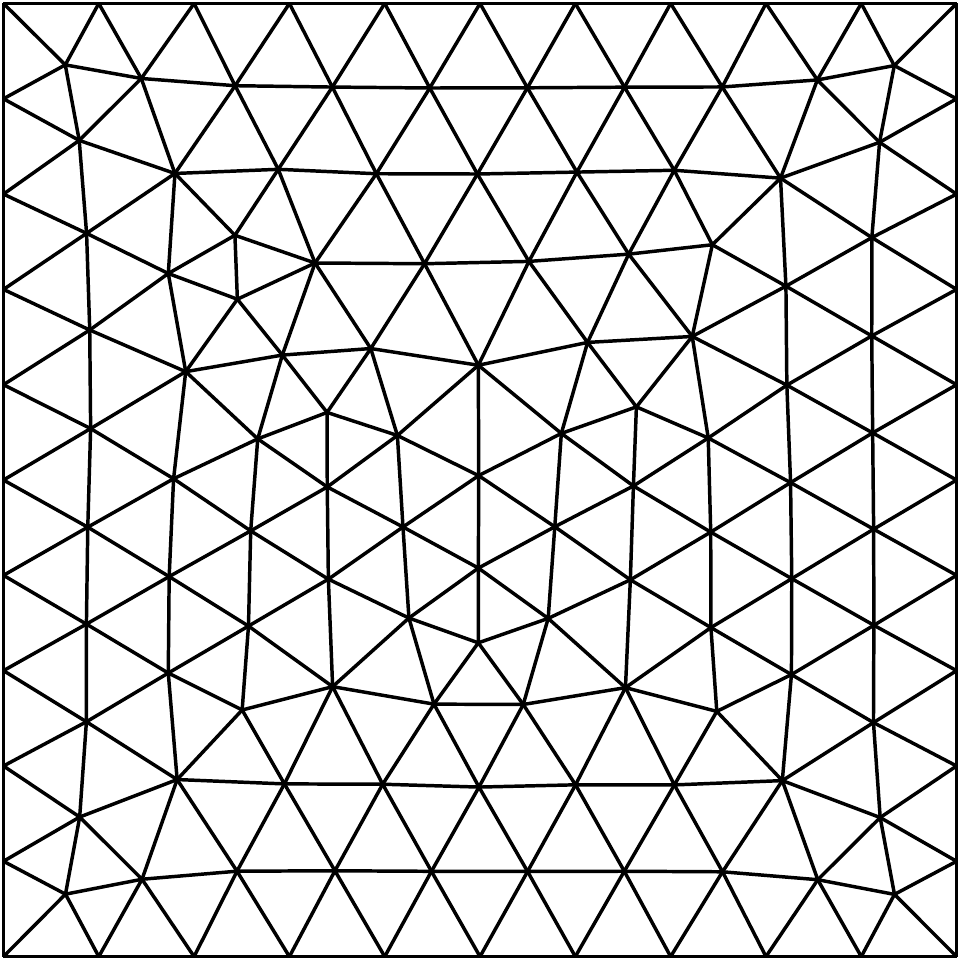}
  \hspace{25pt}
  \includegraphics[width=0.3006\textwidth]{./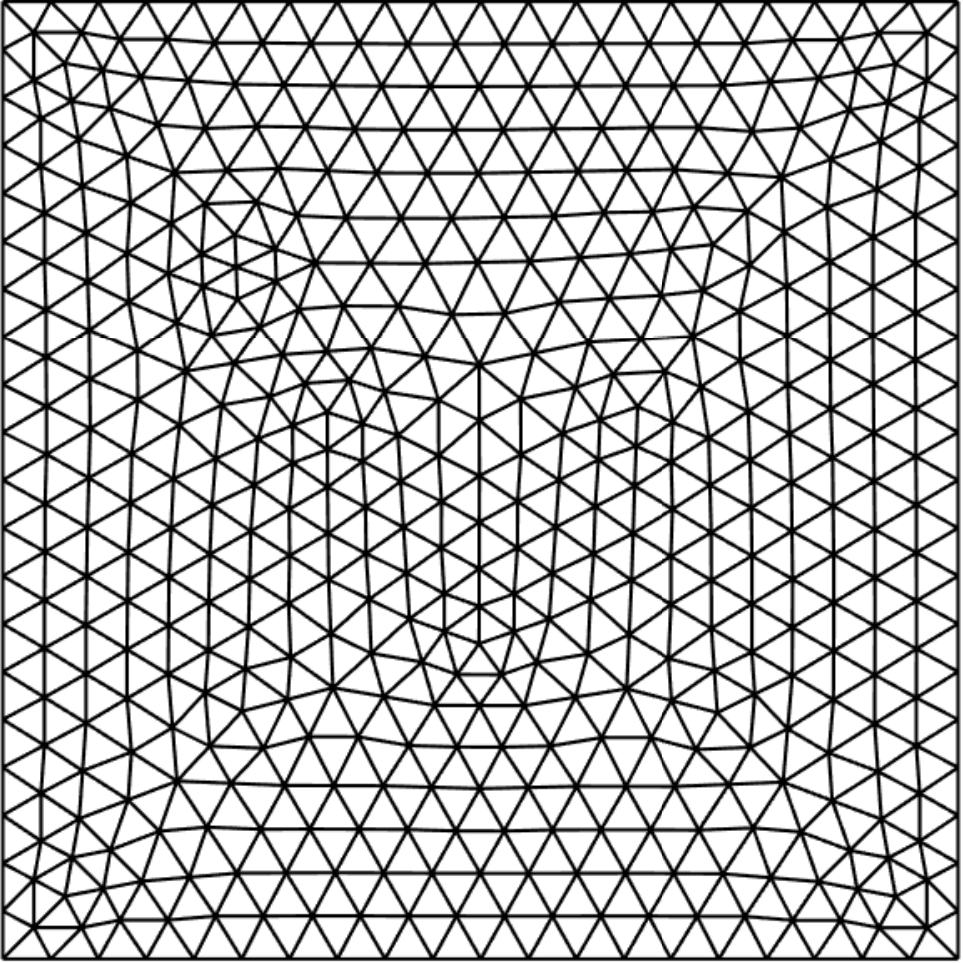}
  \caption{The triangular meshes with mesh size $h = 1/10$ (left) /
  $h = 1/20$ (right) for Example 1.}
  \label{fig_triangulation}
\end{figure}

\begin{figure}[!htp]
  \centering
  \includegraphics[width=0.3\textwidth]{./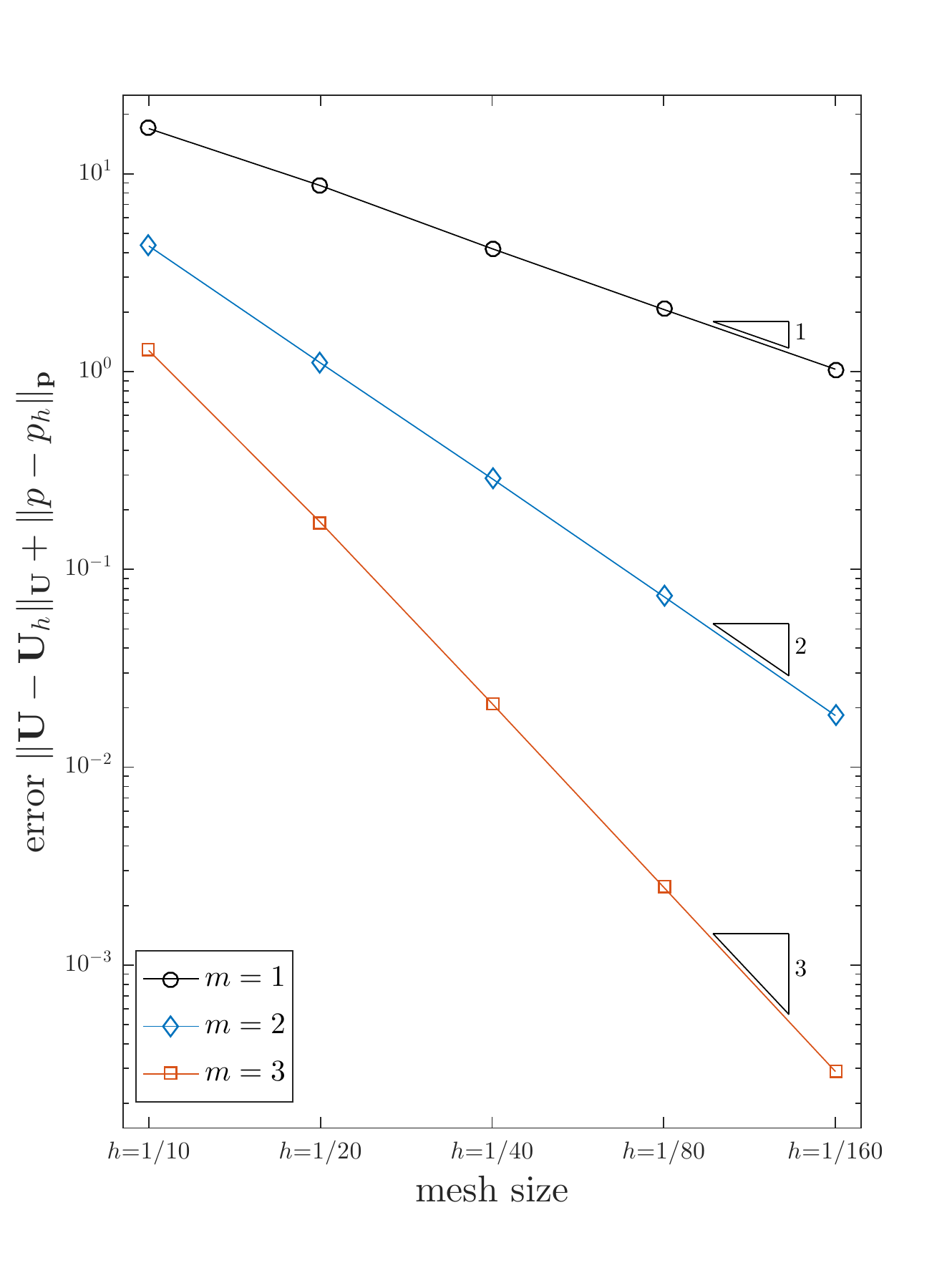}
  \hspace{10pt}
  \includegraphics[width=0.3\textwidth]{./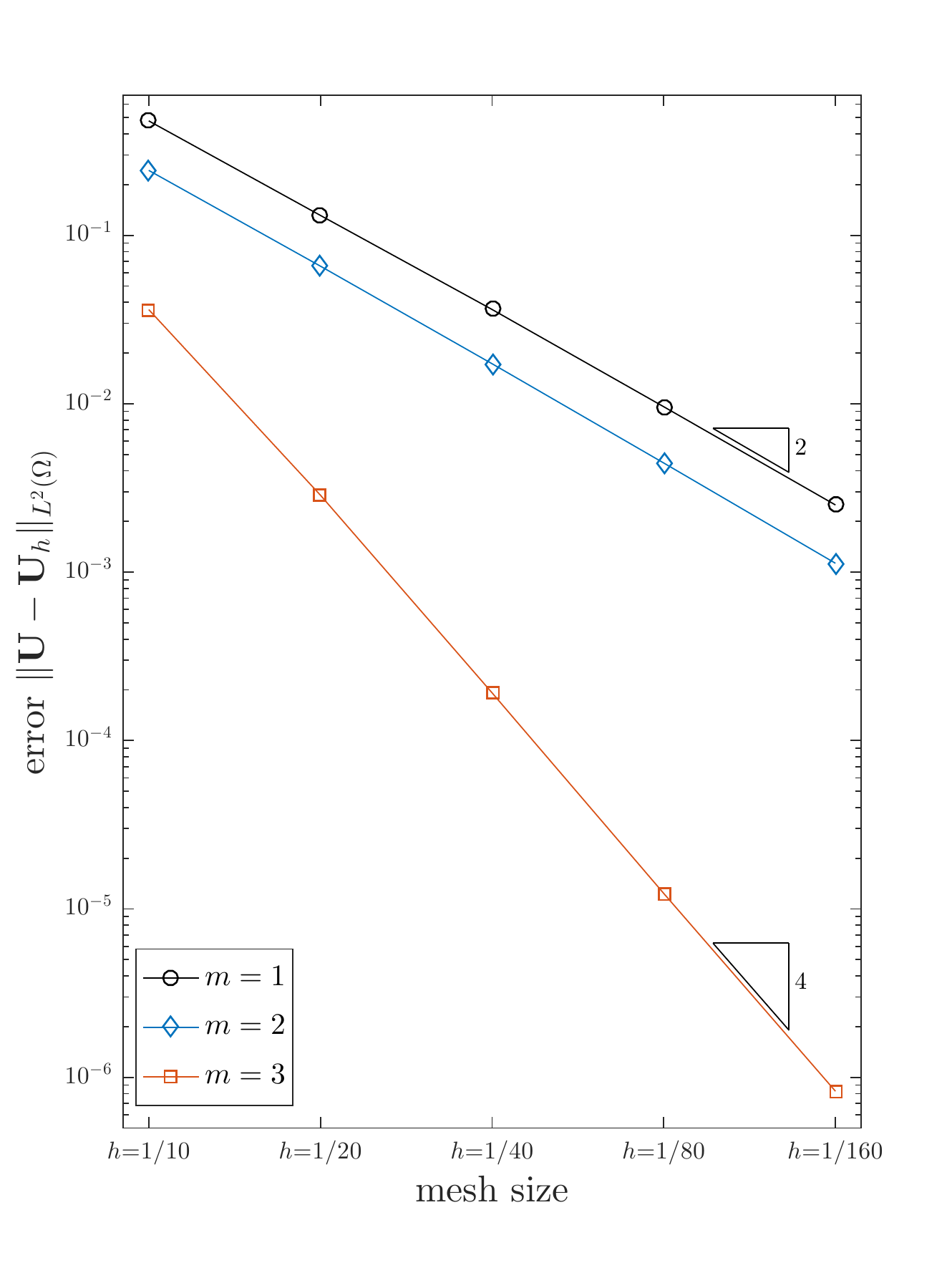}
  \hspace{10pt}
  \includegraphics[width=0.3\textwidth]{./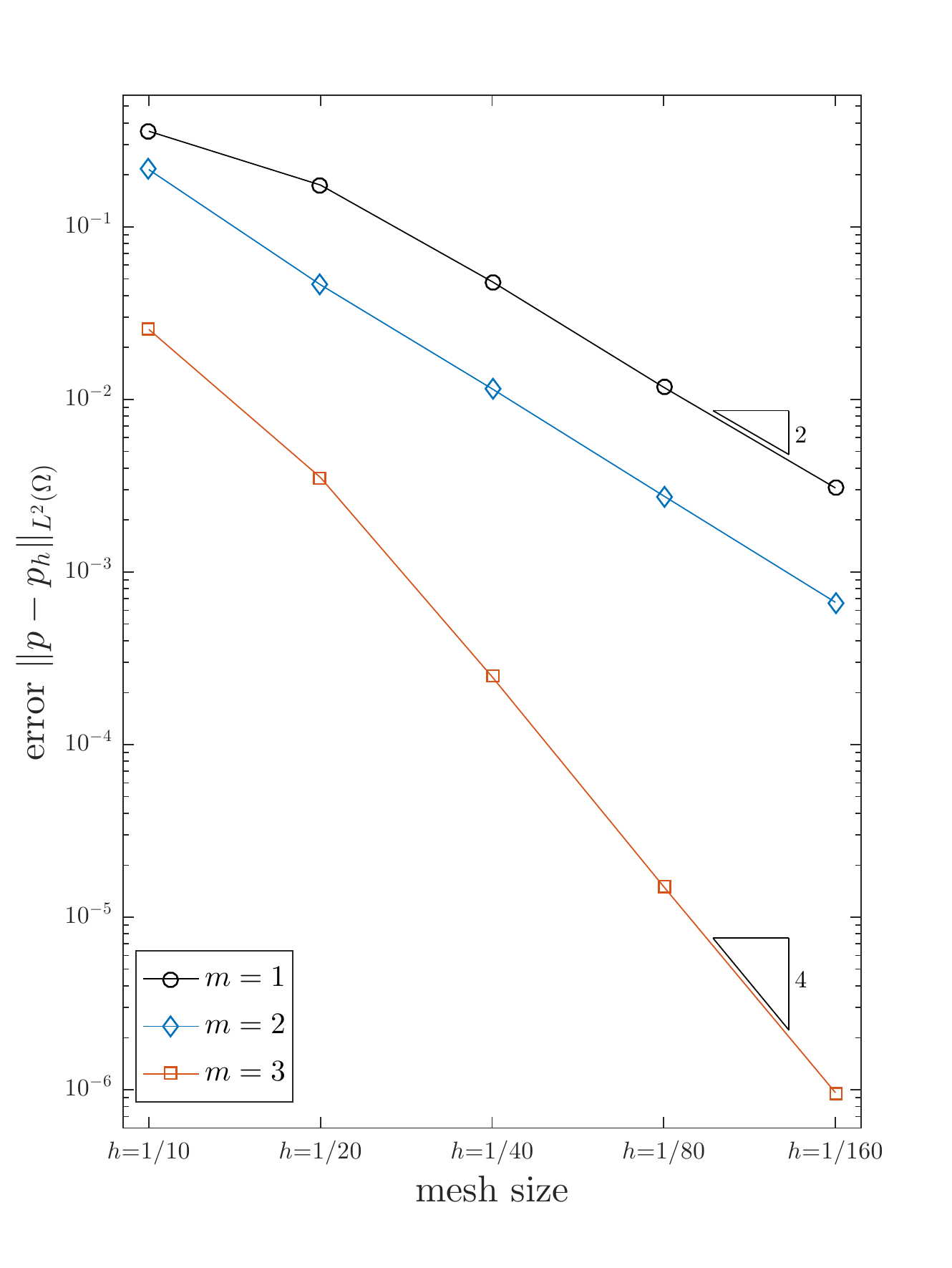}
  \caption{The convergence rates of $\Unorm{\bmr{U} - \bmr{U}_h} +
  \pnorm{p - p_h}$ (left) / $\| \bmr{U} - \bmr{U}_h \|_{L^2(\Omega)}$
  (middle) / $\| p - p_h\|_{L^2(\Omega)}$ (right) for Example 1. }
  \label{fig_ex1qpnorm}
\end{figure}

\begin{figure}[!htp]
  \centering
  \includegraphics[width=0.3\textwidth]{./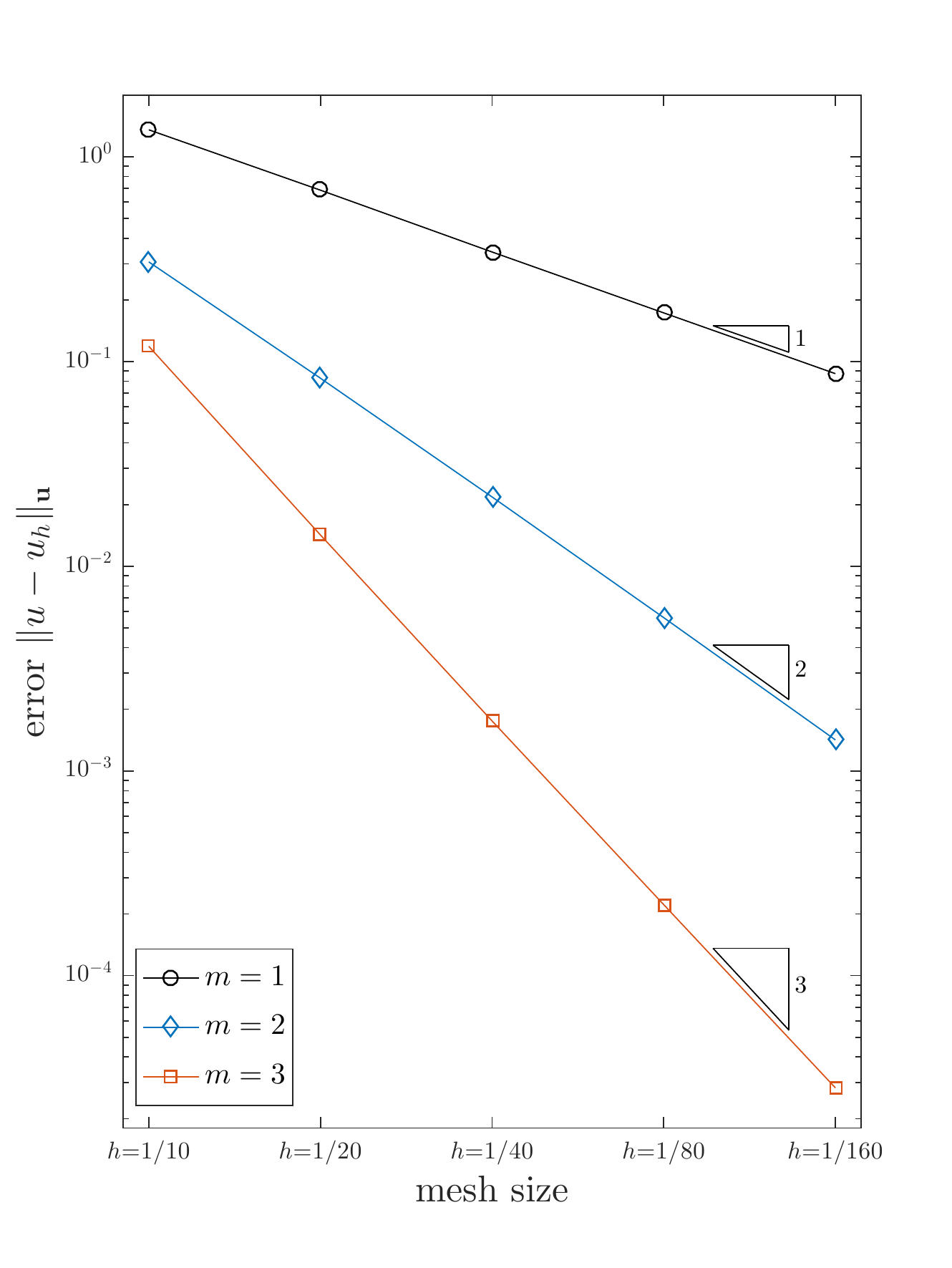}
  \hspace{25pt}
  \includegraphics[width=0.3\textwidth]{./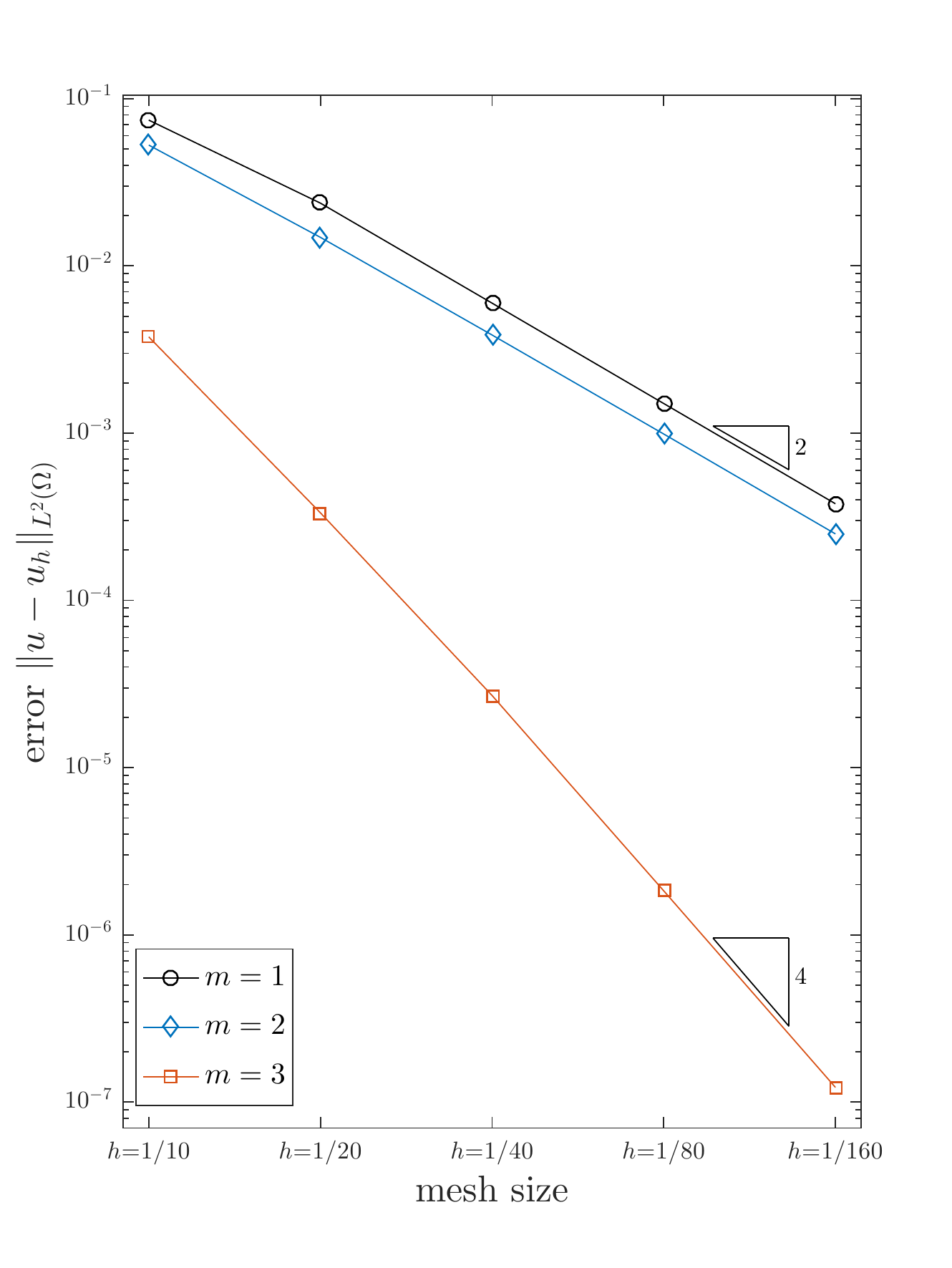}
  \caption{The convergence rates of $\unorm{\bm{u} - \bm{u}_h}$ (left)
  / $\| \bm{u} - \bm{u}_h \|_{L^2(\Omega)}$ (right) for Example 1.}
  \label{fig_ex1unorm}
\end{figure}

\paragraph{\textbf{Example 2}.} Here we solve the Stokes problem to
show the great flexibility of the proposed method. The exact solution
and the computational domain are taken the same as in Example 1 but in
the example we use a series of polygonal meshes with $250$, $1000$,
$4000$, $16000$ elements. These meshes consist of very general
elements and are generated by {\tt
PolyMesher}\cite{talischi2012polymesher}, see
Fig.~\ref{fig_polygonalmesh}. The numerical errors under all error
measurements for both two systems \eqref{eq_firstorderUp} and
\eqref{eq_firstorderu} are shown in Fig.~\ref{fig_ex2qpnorm} and
Fig.~\ref{fig_ex2unorm}, respectively. Again we observe the optimal
convergence orders for all energy norms. For $L^2$ norm, the odd/even
situation is still detected. On such polygonal meshes, the numerically
computed orders agree with our error estimates.

\begin{figure}[!htp]
  \centering
  \includegraphics[width=0.3\textwidth]{./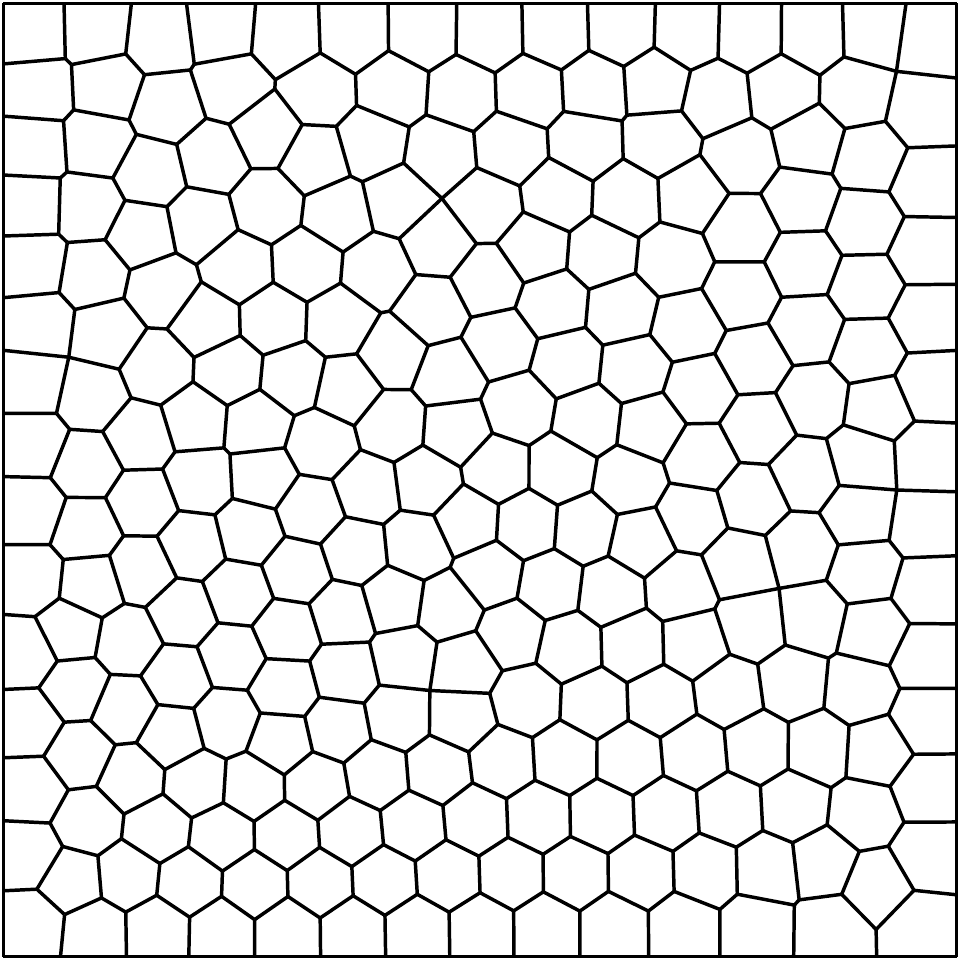}
  \hspace{25pt}
  \includegraphics[width=0.3\textwidth]{./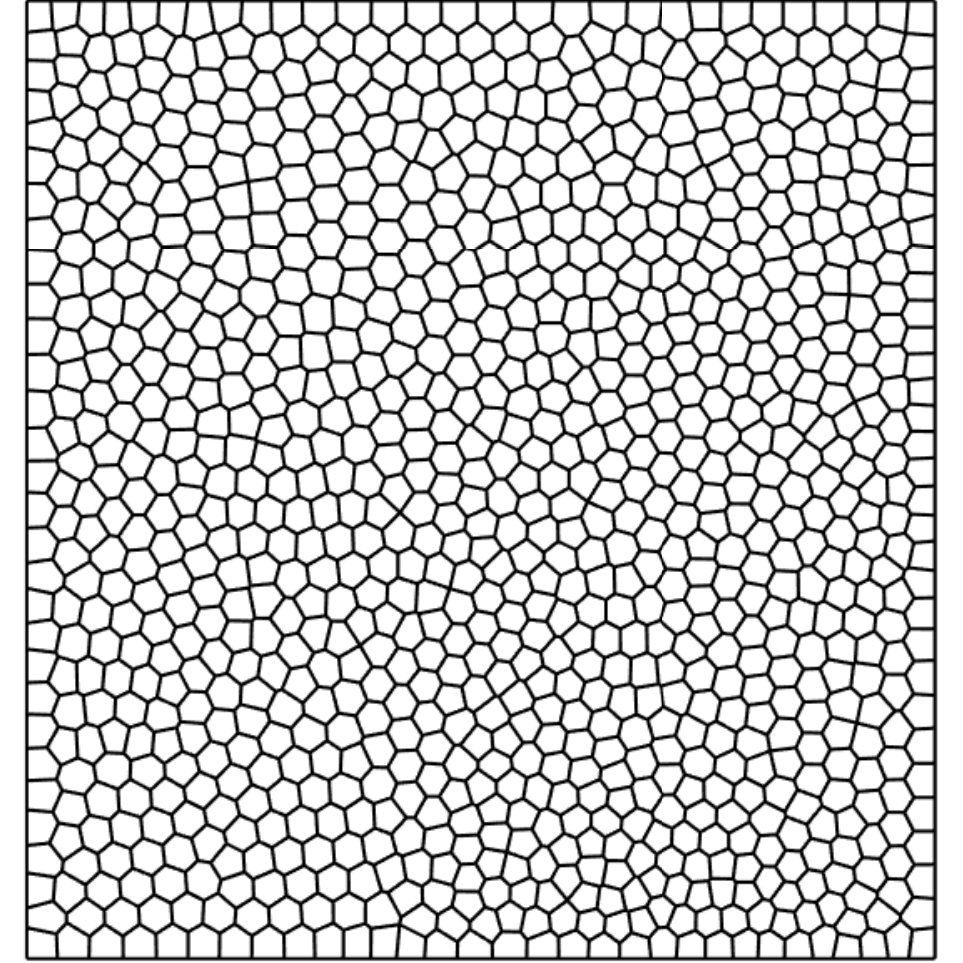}
  \caption{The polygonal meshes with 250 elements (left) /
  1000 elements (right) for Example 2.}
  \label{fig_polygonalmesh}
\end{figure}

\begin{figure}[!htp]
  \centering
  \includegraphics[width=0.3\textwidth]{./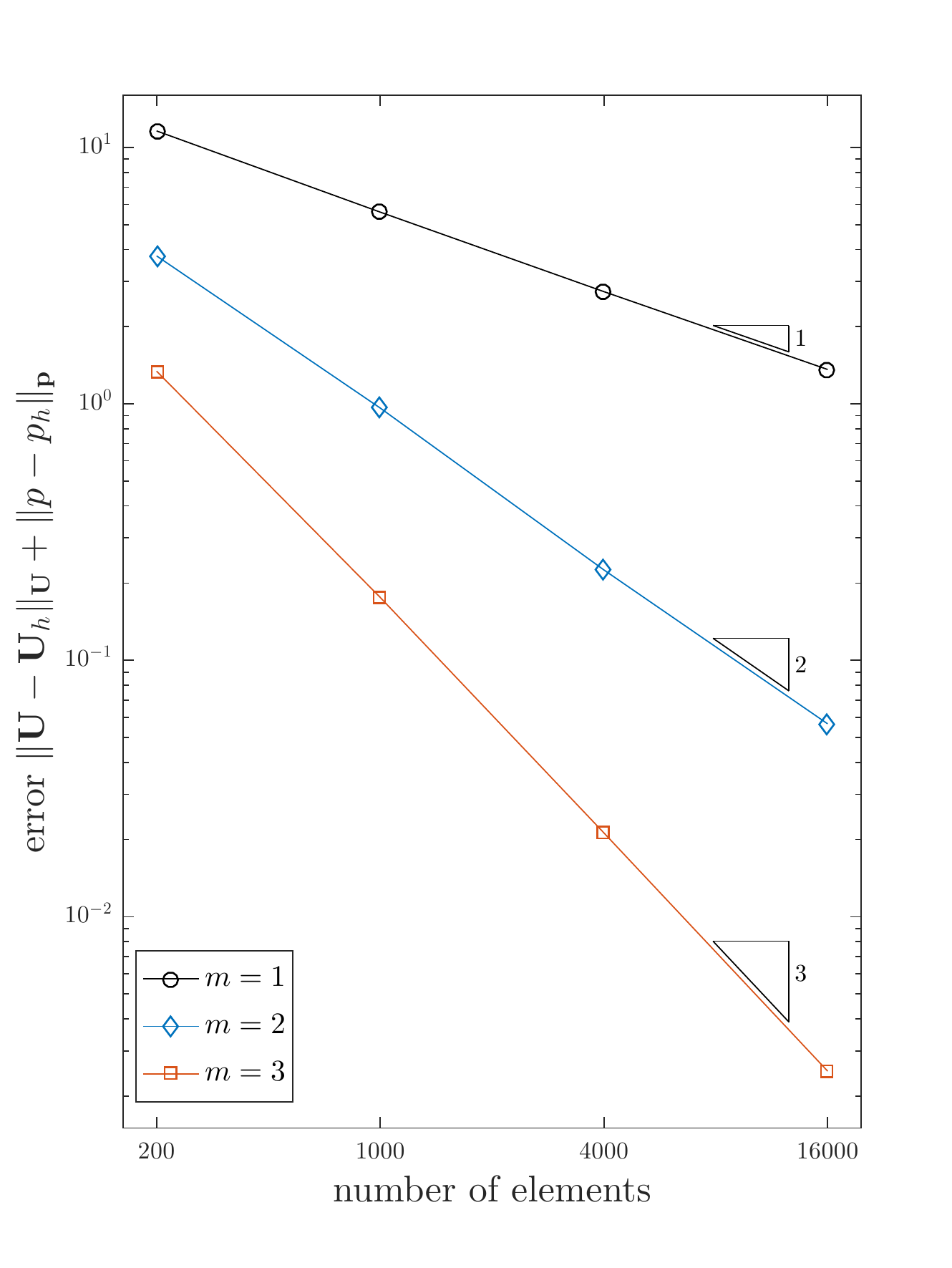}
  \hspace{10pt}
  \includegraphics[width=0.3\textwidth]{./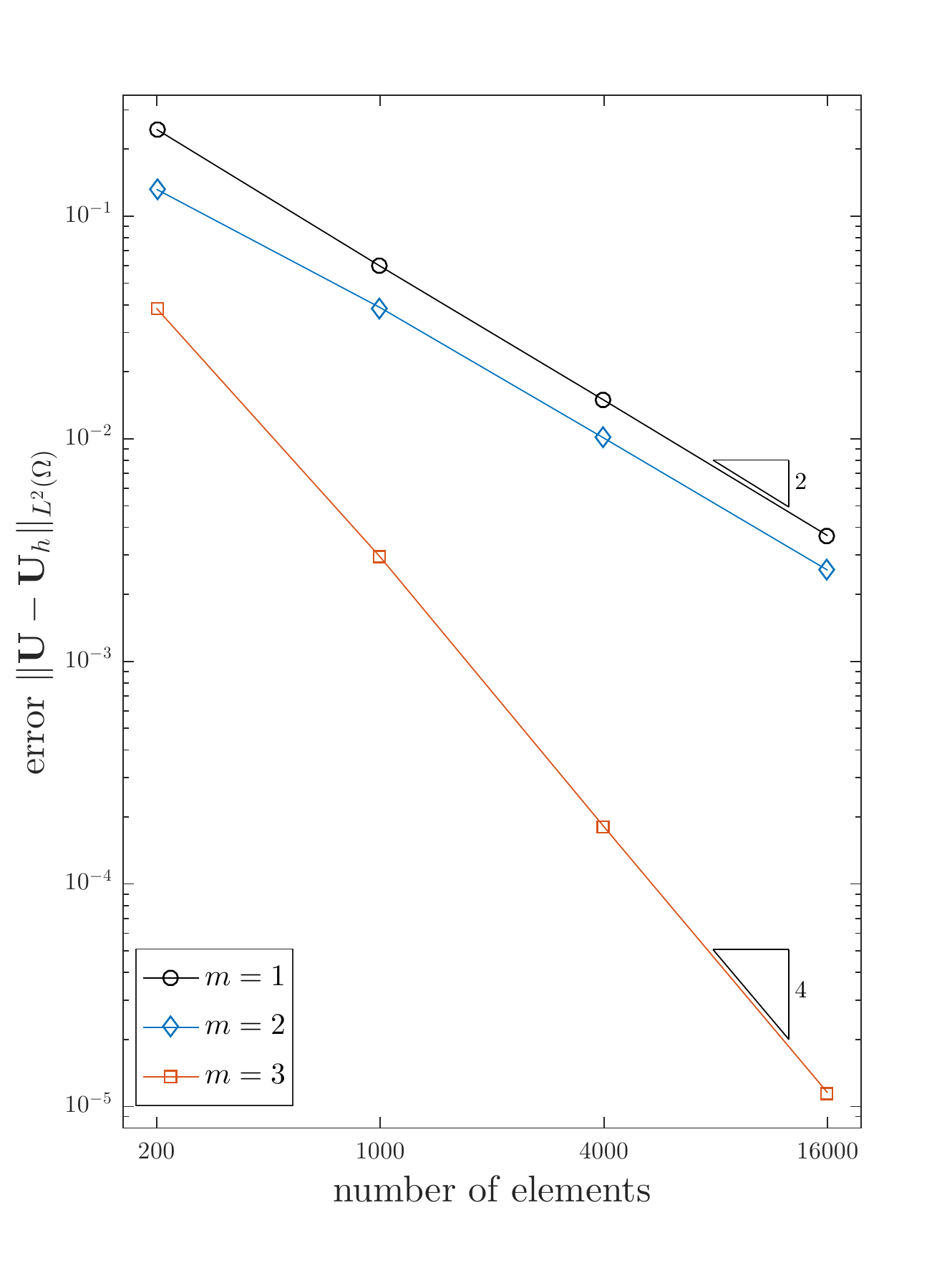}
  \hspace{10pt}
  \includegraphics[width=0.3\textwidth]{./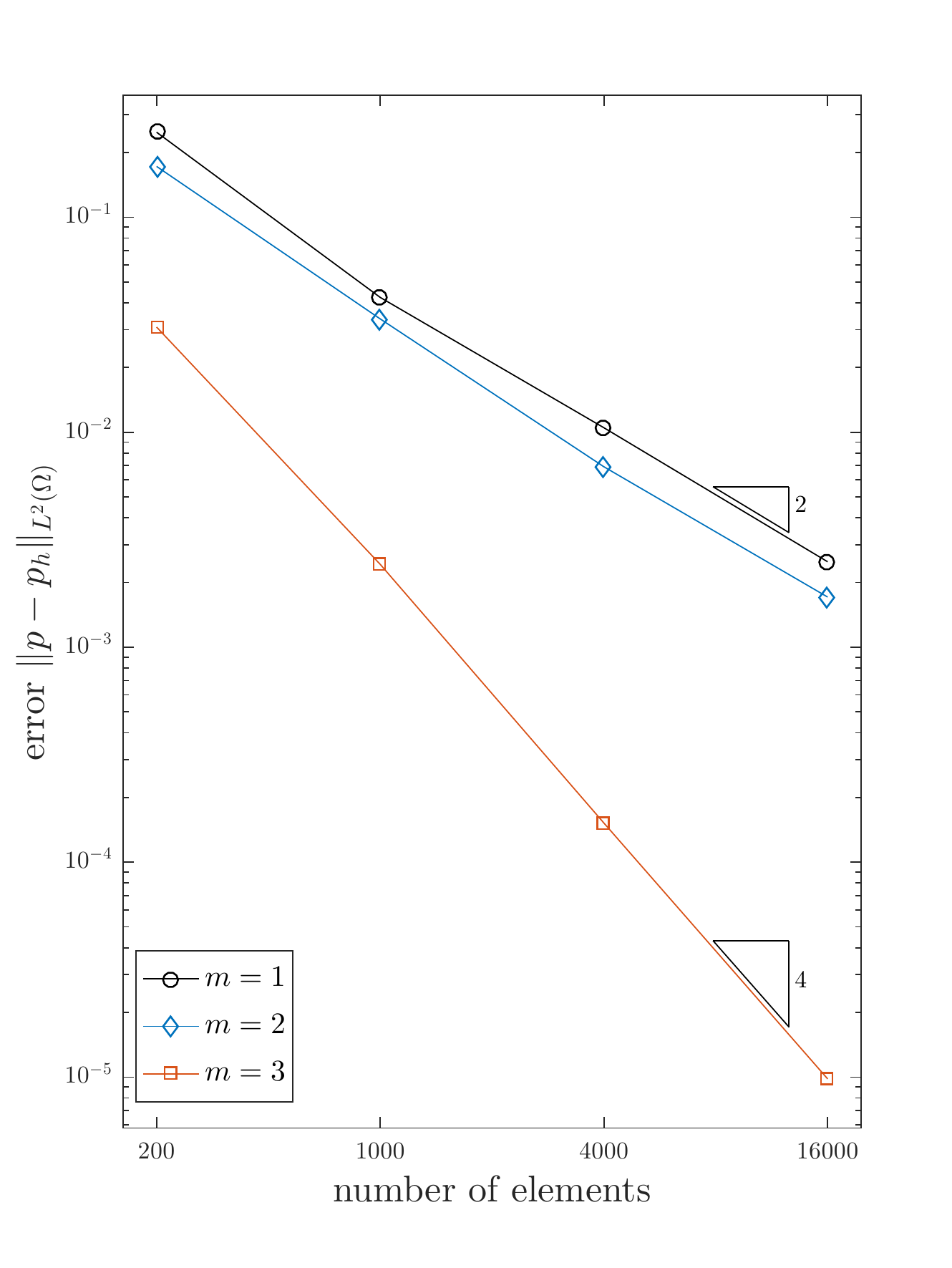}
  \caption{The convergence rates of $\Unorm{\bmr{U} - \bmr{U}_h} +
  \pnorm{p - p_h}$ (left) / $\| \bmr{U} - \bmr{U}_h \|_{L^2(\Omega)}$
  (middle) / $\| p - p_h\|_{L^2(\Omega)}$ (right) for Example 2. }
  \label{fig_ex2qpnorm}
\end{figure}

\begin{figure}[!htp]
  \centering
  \includegraphics[width=0.3\textwidth]{./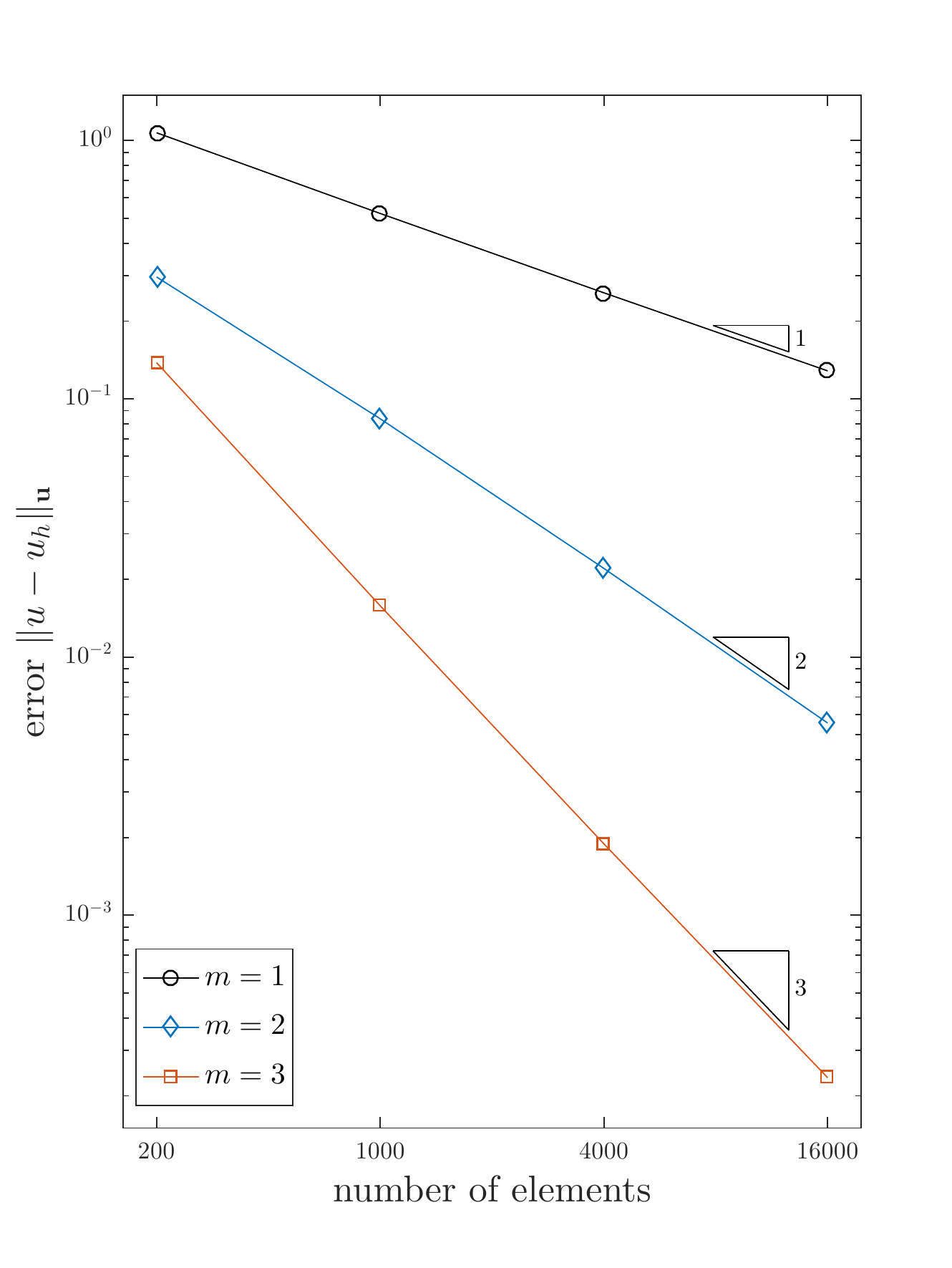}
  \hspace{25pt}
  \includegraphics[width=0.3\textwidth]{./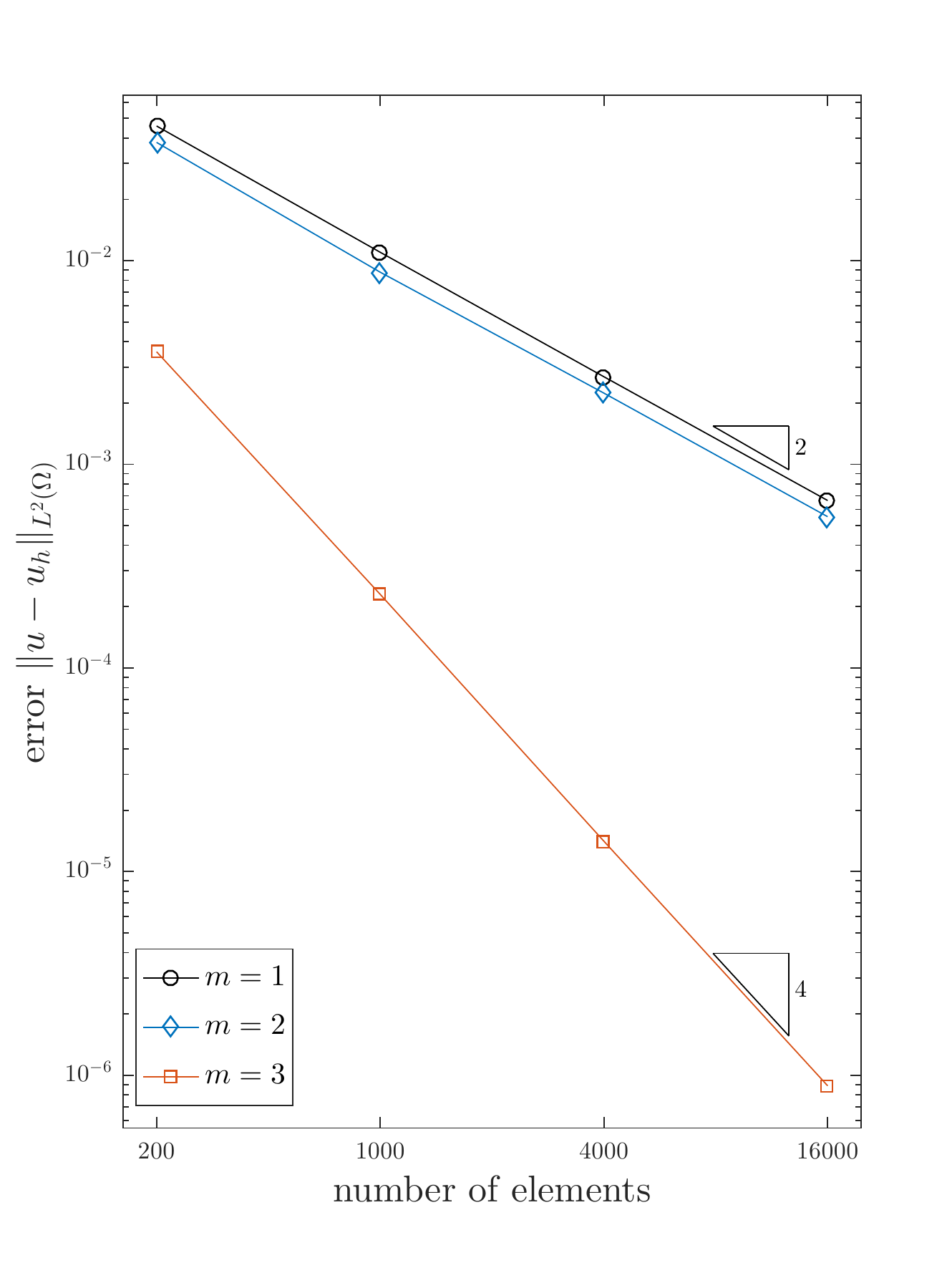}
  \caption{The convergence rates of $\unorm{\bm{u} - \bm{u}_h}$ (left)
  / $\| \bm{u} - \bm{u}_h \|_{L^2(\Omega)}$ (right) for Example 2.}
  \label{fig_ex2unorm}
\end{figure} 

\paragraph{\textbf{Example 3}.} In this example, we test the modified
lid-driven cavity problem \cite{Nister2019efficient} to investigate
the performance of our method dealing with the problem with low
regularities. We consider the unit square domain $\Omega = (0, 1)^2$,
which is subjected to a horizontal flow on the boundary $y = 1$ with
the velocity $\bm{u}(x, y) = (4x(1 - x), 0)^T$. The condition of
remaining boundaries is no-slip boundary condition. The source term
$\bm{f}$ is selected to be $(0, 0)^T$. The velocity field on the upper
boundary involves singularity in the upper right and left corners, but
the restraints are not as strong as for the well-known standard
lid-driven cavity problem \cite{Nister2019efficient}. We solve this
problem on the triangular partition with $h = 1/10$, $h = 1/20$, $h =
1/40$ and $h = 1/80$, see Fig.~\ref{fig_triangulation}. Since the
analytical solution is unknown and we take the numerical solution
which is obtained with the mesh size $h = 1/320$ and the accuracy $m =
3$ as the exact solution. The numerical errors in approximation to the
velocity are presented in Fig.~\ref{fig_ex3unorm}. The convergence
rates under energy norms and $L^2$ norms are detected to be $O(h)$ for
all accuracy $1 \leq m \leq 3$. A possible explanation of such
convergence rates can be traced back to the low regularity of this
problem. Figure \ref{fig_m2liddriven} and Figure \ref{fig_m3liddriven}
present the numerical results obtained on the mesh level $h = 1/80$
with the accuracy $m=2$ and $m=3$, respectively.  We can observe the
main vortex in the center of the domain and two small vortices in the
bottom left and right corners.

\begin{figure}[!htp]
  \centering
  \includegraphics[width=0.3\textwidth]{./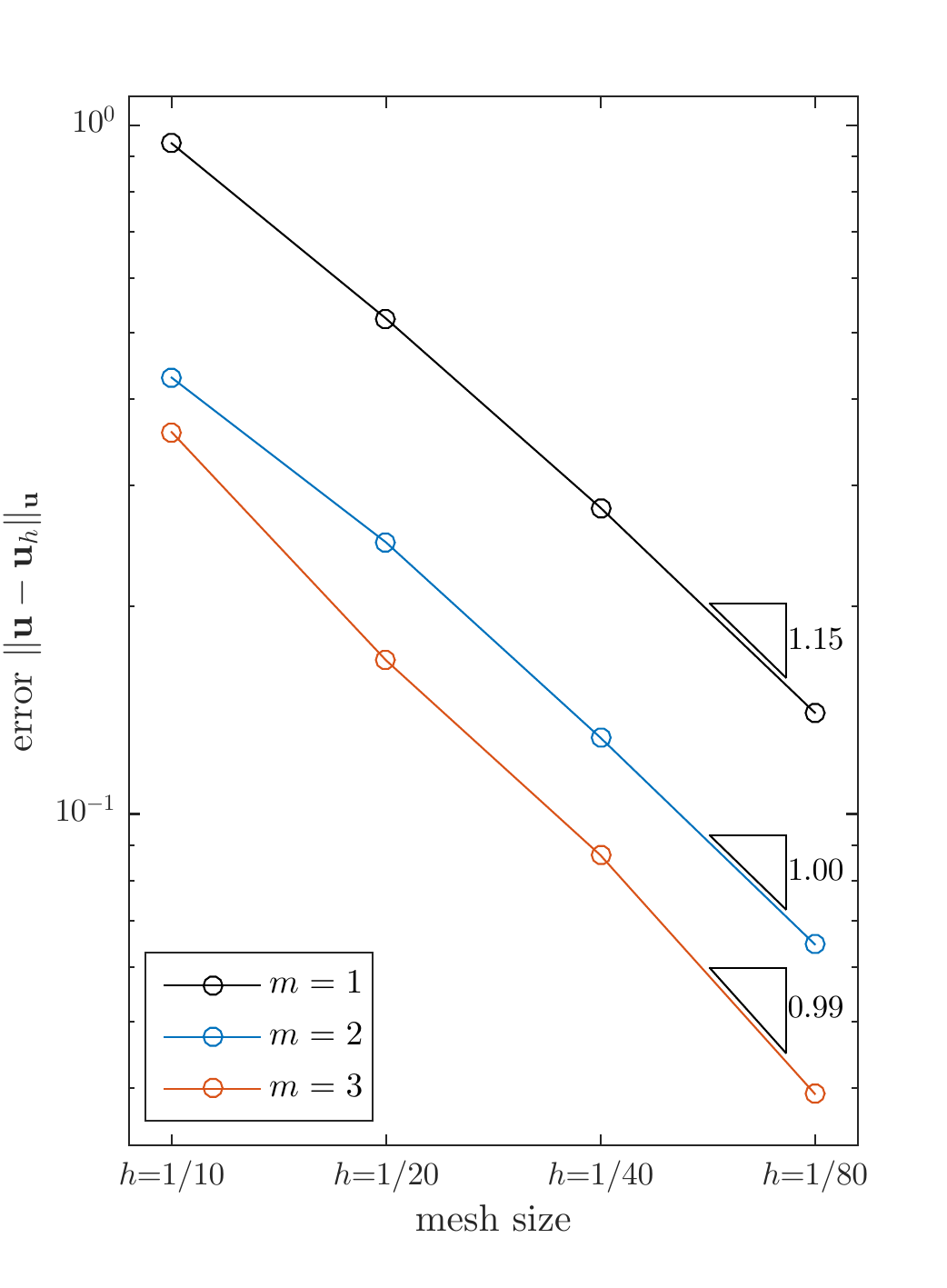}
  \hspace{25pt}
  \includegraphics[width=0.3\textwidth]{./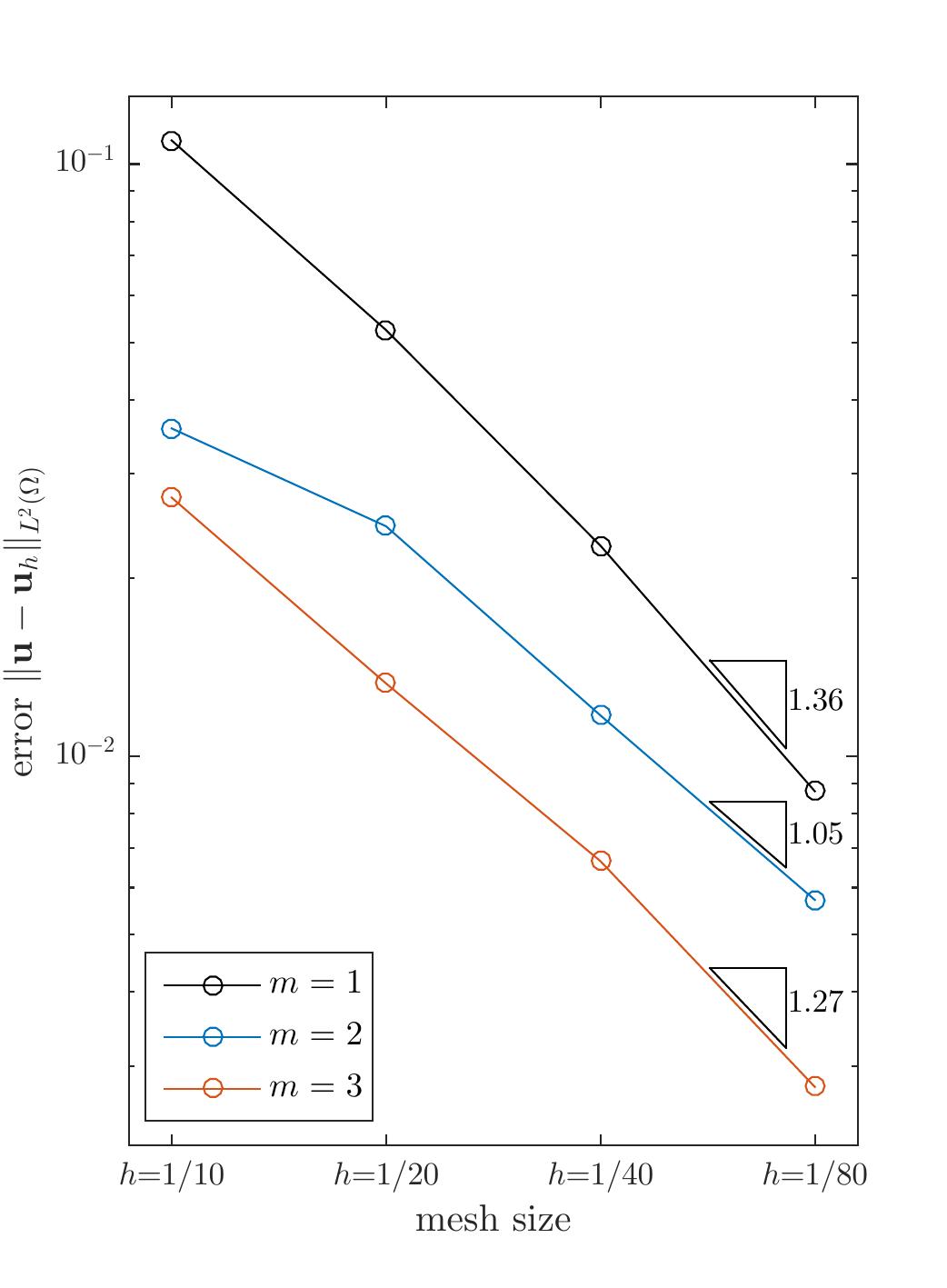}
  \caption{The convergence rates of $\unorm{\bm{u} - \bm{u}_h}$ (left)
  / $\| \bm{u} - \bm{u}_h \|_{L^2(\Omega)}$ (right) for Example 3.}
  \label{fig_ex3unorm}
\end{figure} 

\begin{figure}
  \centering
  \includegraphics[width=0.48\textwidth]{./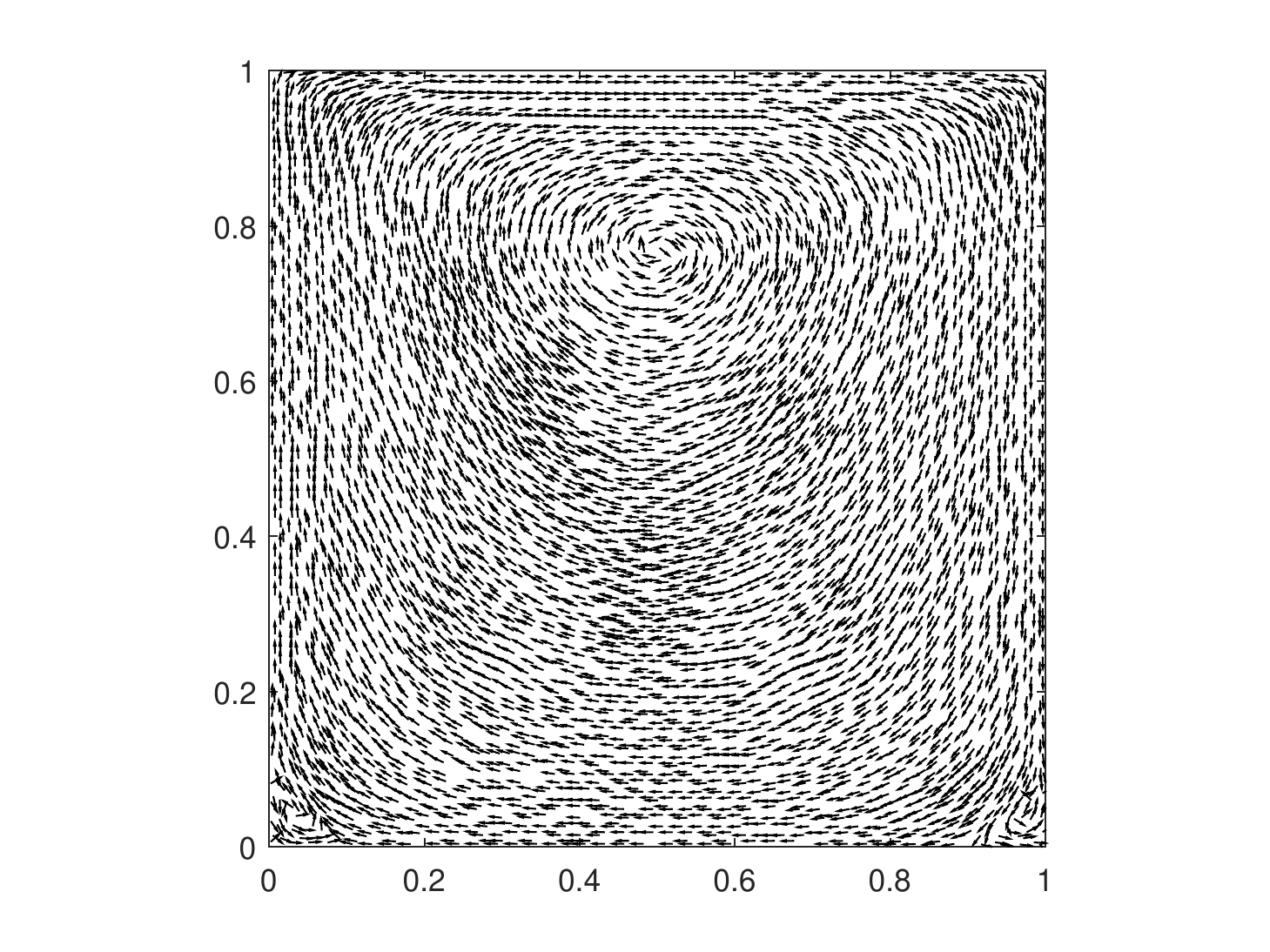}
  \includegraphics[width=0.48\textwidth]{./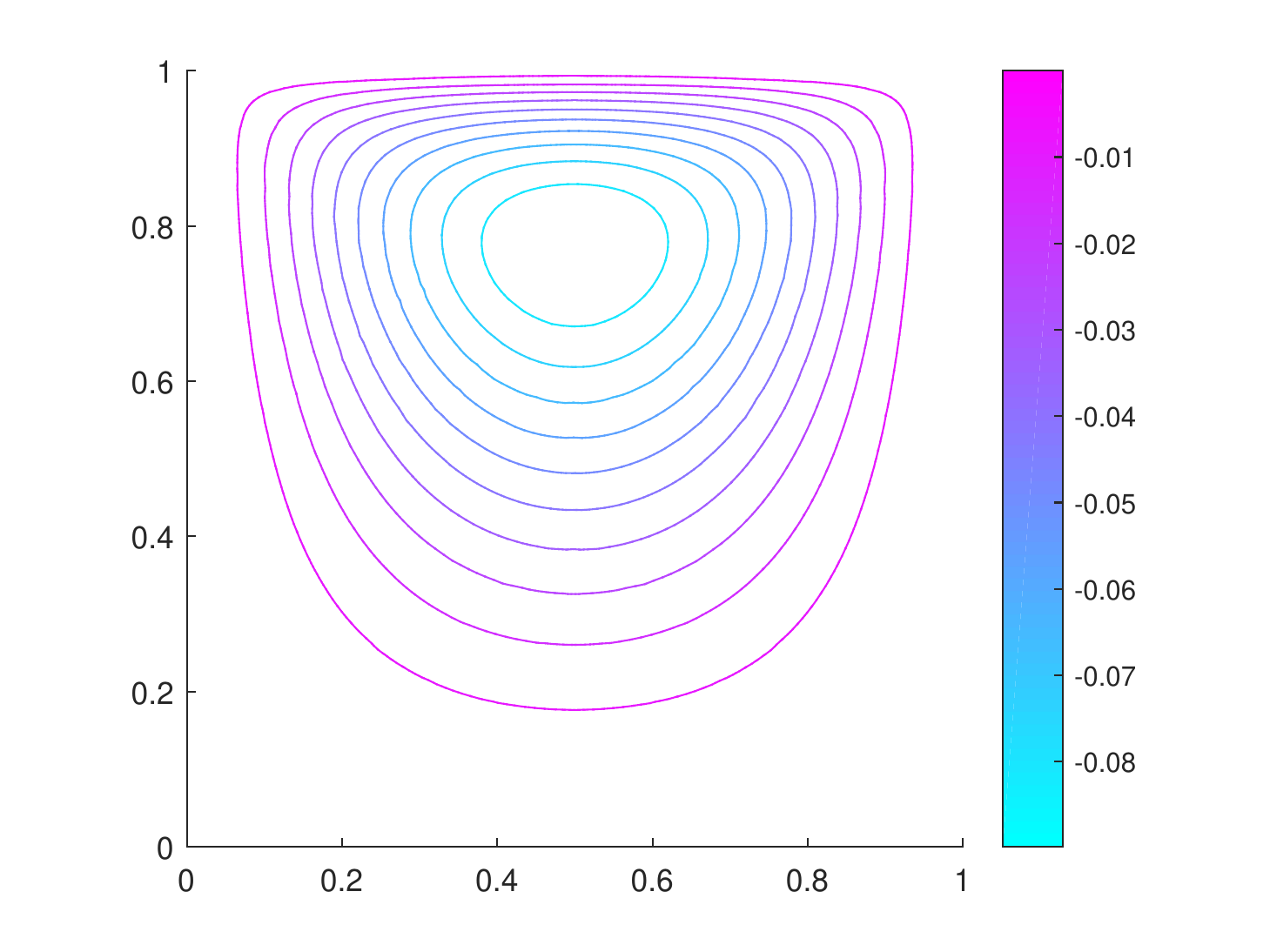}
  \caption{Velocity vectors (left) and the streamline of the
    flow (right) with the accuracy $m = 2$ for Example 3.}
  \label{fig_m2liddriven}
\end{figure}

\begin{figure}
  \centering
  \includegraphics[width=0.48\textwidth]{./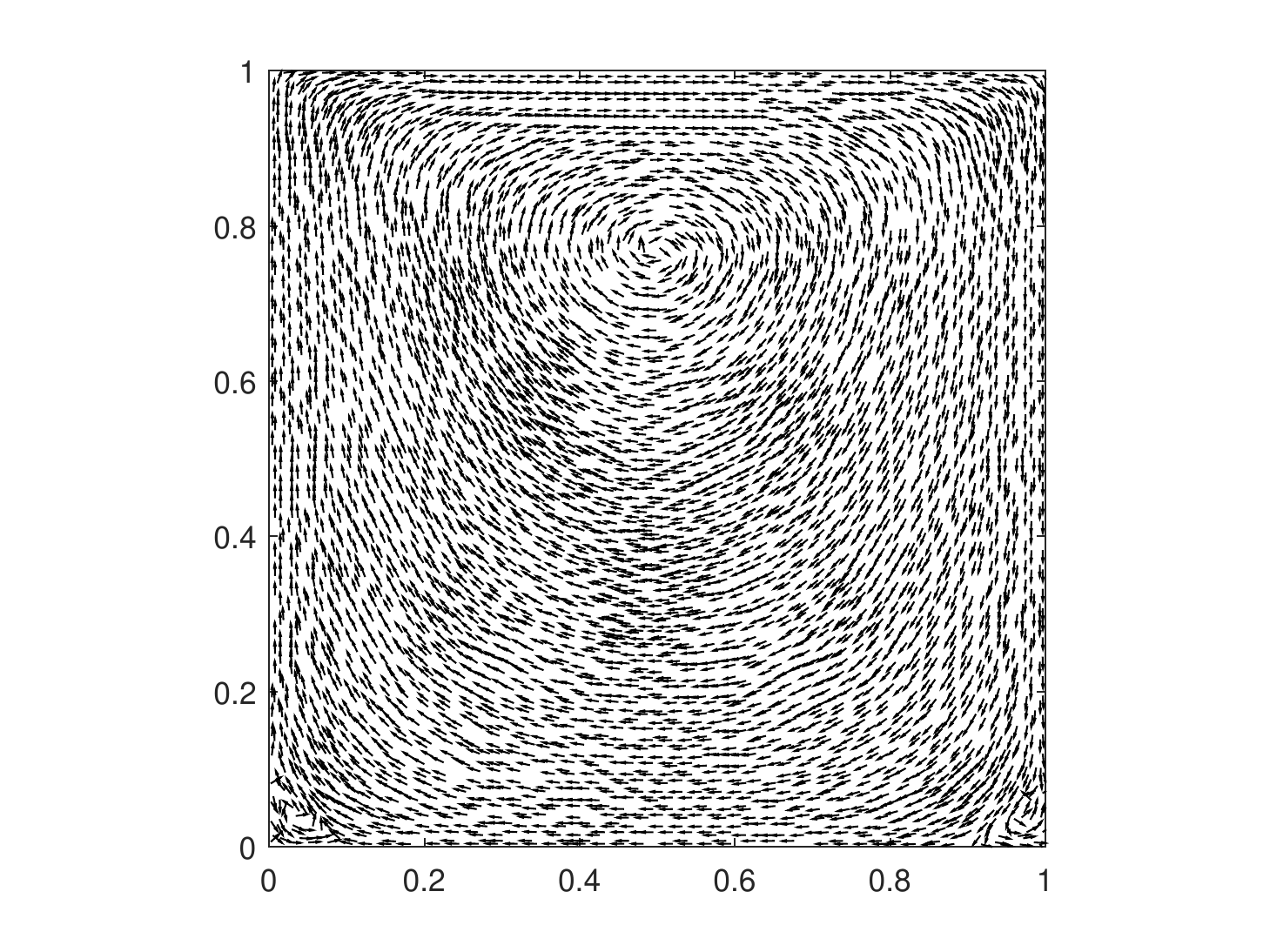}
  \includegraphics[width=0.48\textwidth]{./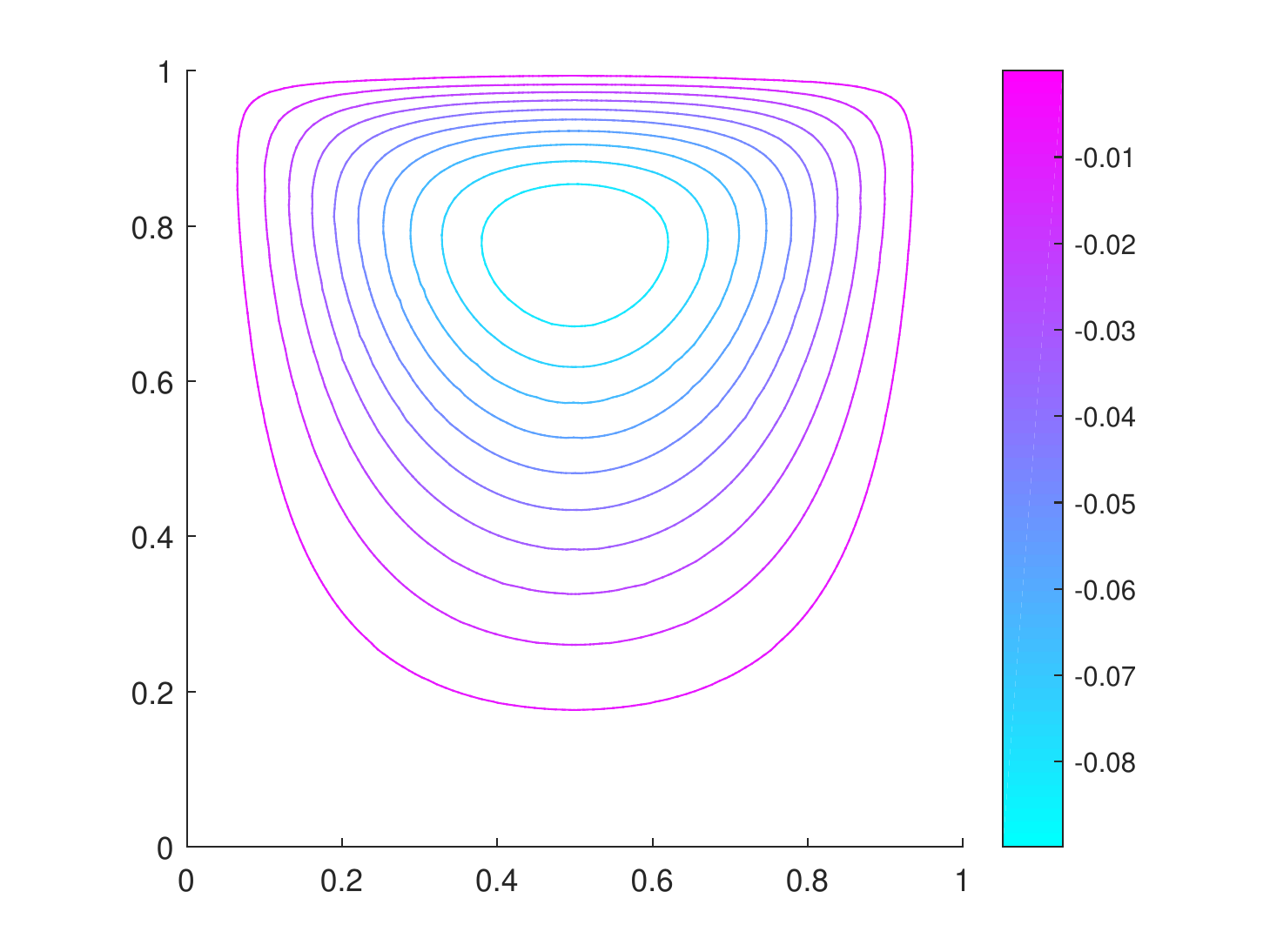}
  \caption{Velocity vectors (left) and the streamline of the
    flow (right) with the accuracy $m = 3$ for Example 3.}
  \label{fig_m3liddriven}
\end{figure}

\subsection{3D Example} 

\paragraph{\textbf{Example 4}.} In this example, we consider the
Stokes problem in three dimensions. We solve the problem on the domain
$\Omega = (0, 1)^3$ and we take a series of tetrahedral meshes with
the resolution $h = 1/4$, $h = 1/8$ and $h = 1/16$ (see
Fig.~\ref{fig_3dmesh}). We choose the analytical solution $\bm{u}$ and
$p$ as 
\begin{displaymath}
  \bm{u}(x, y, z) = \begin{bmatrix}
    1 - e^{x} \cos(2 \pi y) \\
    \frac{1}{2\pi} e^{x} \sin(2 \pi y) \\ 
    0 \\
  \end{bmatrix}, \quad p(x, y, z) = x^2 + y^2 - \frac{2}{3}.
\end{displaymath}
The numerical errors in approximation to the gradient and the pressure
are collected in Tab.~\ref{tab_ex4Up}, and the numerical errors in
solving the second first-order system are gathered in
Tab.~\ref{tab_ex4u}. We also depict the velocity field and the contour
of $| \bm{u}_h |$ in Fig.~\ref{fig_ex4slice} and the numerical
solution in this figure is obtained on the mesh level $h = 1/16$ with
the accuracy $m = 3$. Here, we still observe the odd/even situation.
For odd $m$, the errors under $L^2$ norm seem to converge to zero
optimally as the mesh size tends to zero. For even $m$, the
convergence rates for all variables under $L^2$ norm are numerically
detected to be sub-optimal.  Again we note that all computed
convergence orders are consistent with theoretical analysis.

\begin{figure}[!htp]
  \centering
  \includegraphics[width=0.3\textwidth]{./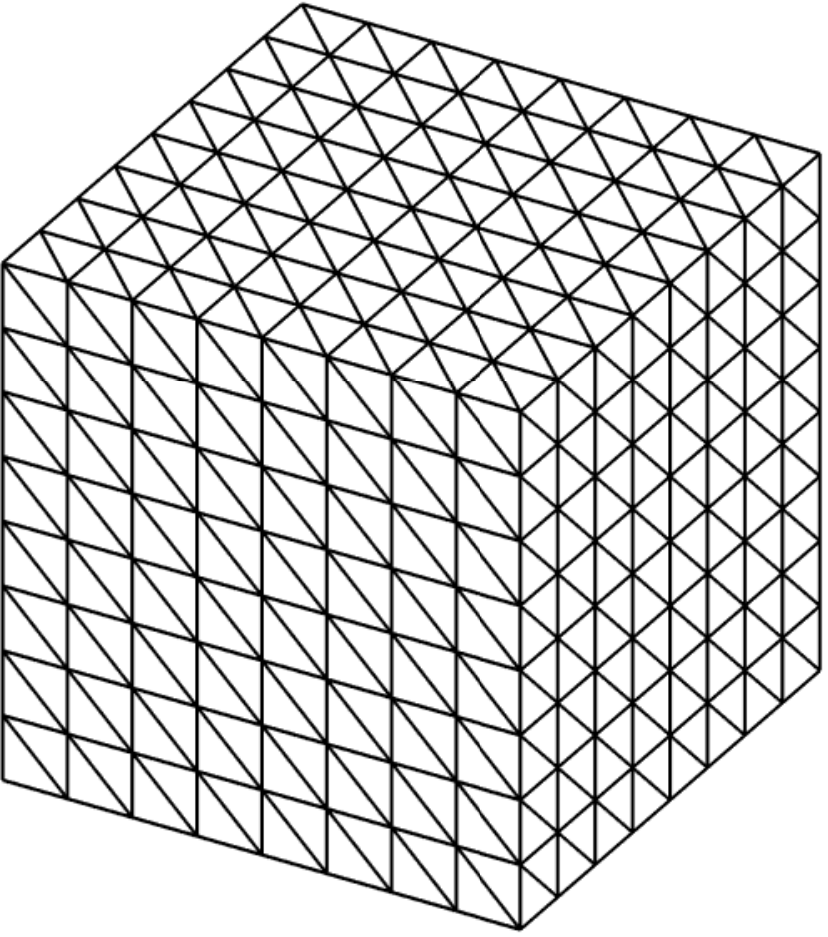}
  \hspace{25pt}
  \includegraphics[width=0.3\textwidth]{./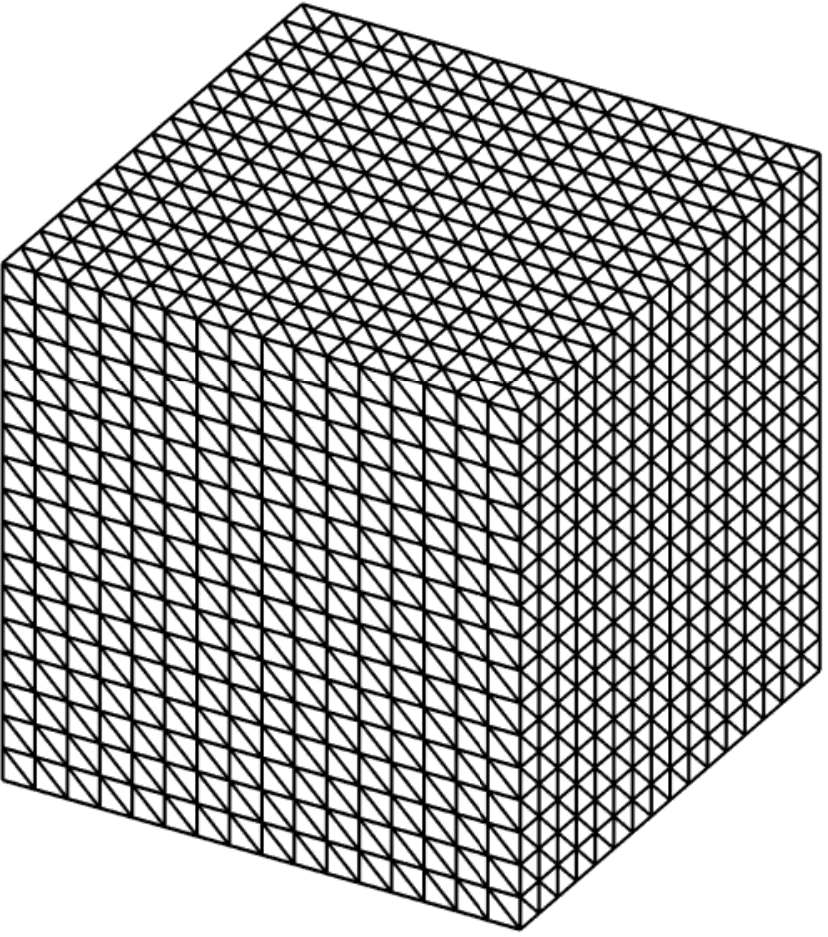}
  \caption{The tetrahedral meshes with mesh size $h = 1/8$ (left) /
  mesh size $h = 1/16$ (right) for three-dimensional examples.}
  \label{fig_3dmesh}
\end{figure}

\begin{figure}[htp]
  \centering
  \includegraphics[height=1.8in]{./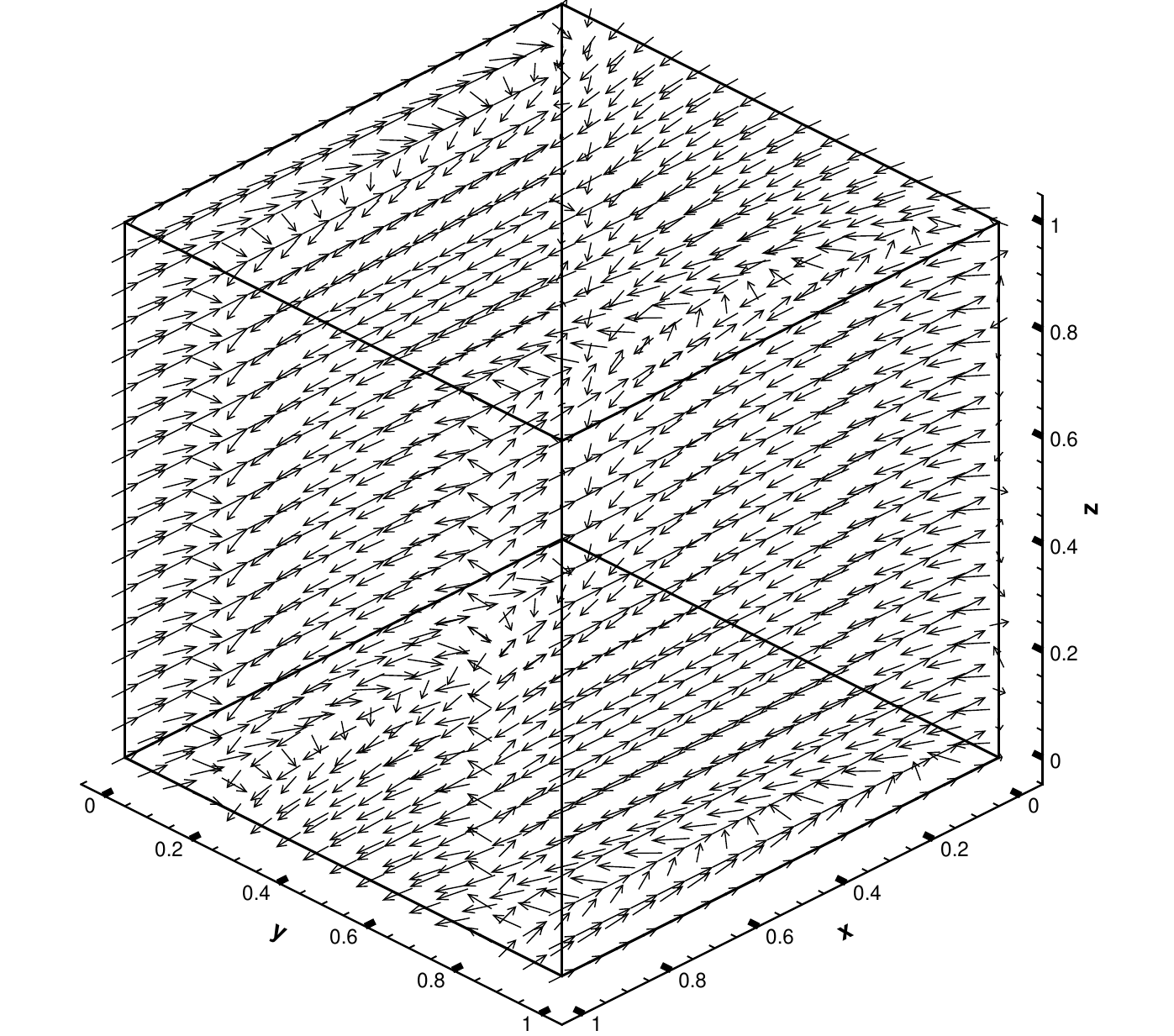}
  \hspace{30pt}
  \includegraphics[height=1.8in]{./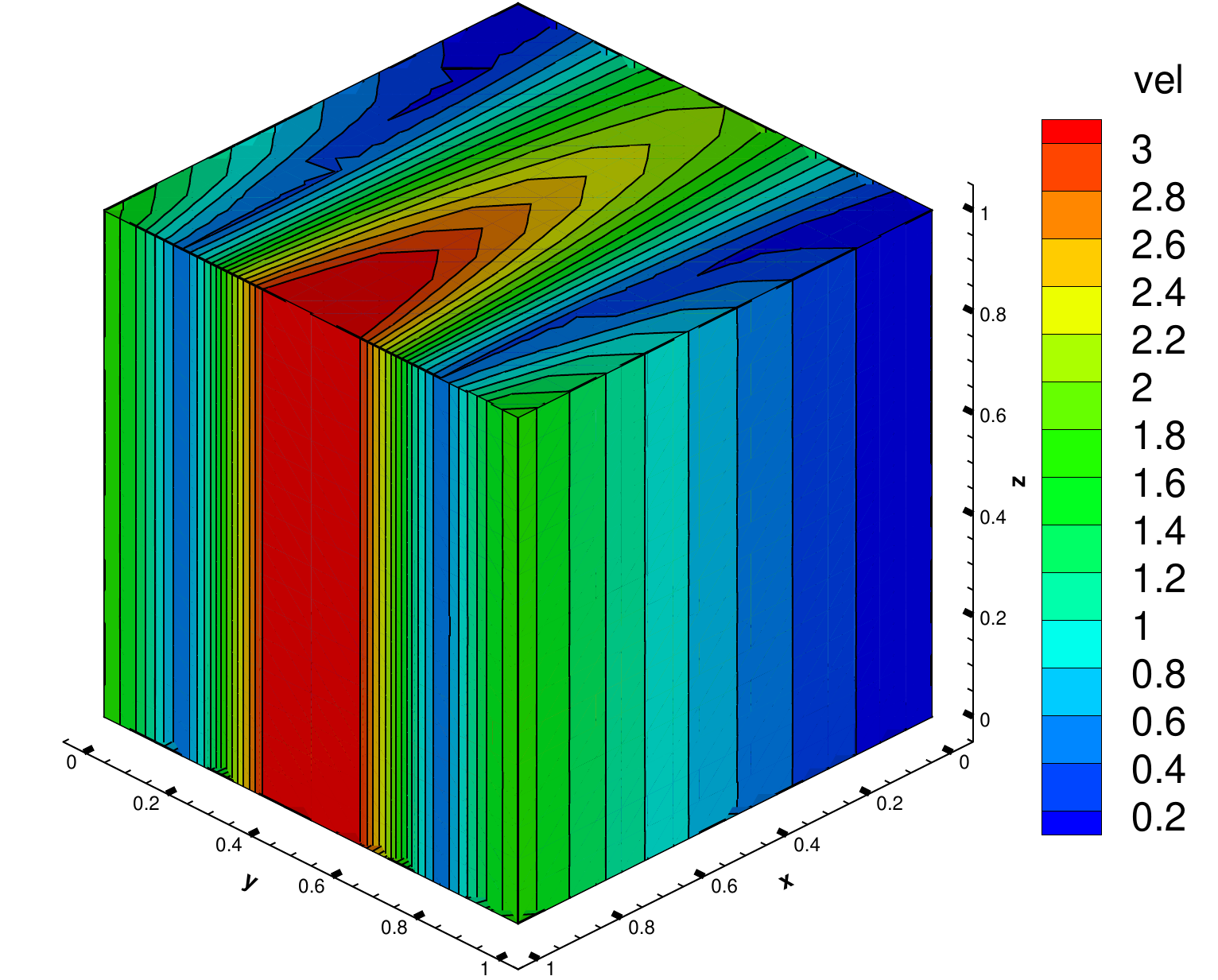}
  \caption{The velocity field (left) / the contour of $ | \bm{u}_h|$
  (right) for Example 4.}
  \label{fig_ex4slice}
\end{figure}

\begin{table}
  \centering
  \renewcommand\arraystretch{1.2}
  \scalebox{0.9}{
  \begin{tabular}{p{0.3cm} | c | c | c | c  |
     c |c | c}
    \hline\hline
    $m$ & $h$ &  $\Unorm{\bmr{U} - \bmr{U}_h} + \pnorm{p - p_h} $ &
    order & $\| \bmr{U} - \bmr{U}_h \|_{L^2(\Omega)}$ & order & $ \| p
    - p_h \|_{L^2(\Omega)}$ & order \\
    \hline
    \multirow{3}{*}{$1$} 
    & $1/4$ & 4.162e+1  & -  & 2.022e-0 & - & 3.421e-0 & - \\
    \cline{2-8}
    & $1/8$ & 2.446e+1 & 0.77  & 9.998e-1 & 1.02 & 1.743e-0 & 0.97 \\
    \cline{2-8}
    & $1/16$ & 1.206e+1 & 1.02  & 3.653e-1 & 1.46 & 7.143e-1 & 1.29 \\
    \hline
    \multirow{3}{*}{$2$} 
    & $1/4$ & 1.476e+1  & -  & 6.388e-1 & - & 1.053e-0 & - \\
    \cline{2-8}
    & $1/8$ & 4.213e-0 & 1.81  & 1.284e-1 & 2.31 & 3.425e-1 & 1.62 \\
    \cline{2-8}
    & $1/16$ & 1.167e-0 & 1.92  & 3.128e-2 & 2.03 & 7.752e-2 & 2.13 \\
    \hline
    \multirow{3}{*}{$3$} 
    & $1/4$ & 4.125e-0  & -  & 1.557e-1 & - & 1.409e-1 & - \\
    \cline{2-8}
    & $1/8$ & 4.913e-1 & 3.06  & 1.125e-2 & 3.79 & 1.010e-2 & 3.80 \\
    \cline{2-8}
    & $1/16$ & 5.431e-2 & 3.13  & 7.507e-4 & 3.91 & 6.931e-4 & 3.86 \\
    \hline\hline
  \end{tabular}}
  \caption{The numerical results of  $\Unorm{\bmr{U} - \bmr{U}_h} +
  \pnorm{p - p_h} $, $\| \bmr{U} - \bmr{U}_h
  \|_{L^2(\Omega)}$ and  $ \| p - p_h \|_{L^2(\Omega)}$ for
  Example 4.}
  \label{tab_ex4Up}
\end{table}

\begin{table}
  \centering
  \renewcommand\arraystretch{1.2}
  \begin{tabular}{ p{0.3cm} | p{0.8cm} | c | c|c  |
    c }
    \hline\hline 
    $m$ & $h$ &  $\unorm{\bm{u} - \bm{u}_h} $ & order & $\|\bm{u} -
    \bm{u}_h \|_{L^2(\Omega)}$ & order \\
    \hline  
    \multirow{3}{*}{$1$} 
    & $1/4 $  & 3.495e-0 & - & 2.865e-1 & - \\
    \cline{2-6}
    & $1/8 $  & 2.075e-0 & 0.75 & 1.278e-1 & 1.16 \\ 
    \cline{2-6}
    & $1/16$  & 1.062e-0 & 0.97 & 5.081e-2 & 1.33 \\ 
    \hline 
    \multirow{3}{*}{$2$} 
    & $1/4 $  & 2.395e-0 & -   & 1.509e-1 & - \\
    \cline{2-6}
    & $1/8 $  & 5.402e-1 & 2.15 & 3.259e-2 & 2.21 \\ 
    \cline{2-6}
    & $1/16$  & 1.367e-1 & 1.98 & 7.653e-3 & 2.09 \\ 
    \hline 
    \multirow{3}{*}{$3$} 
    & $1/4 $  & 4.640e-1 & -   & 2.307e-2 & - \\
    \cline{2-6}
    & $1/8 $  & 5.382e-2 & 3.10 & 1.747e-3 & 3.72 \\ 
    \cline{2-6}
    & $1/16$  & 6.649e-3 & 3.02 & 1.255e-4 & 3.81 \\ 
    \hline\hline
  \end{tabular}
  \caption{The numerical results of $\unorm{\bm{u} - \bm{u}_h} $ and
  $\|\bm{u} - \bm{u}_h \|_{L^2(\Omega)}$ for Example 4. }
  \label{tab_ex4u}
\end{table}

\paragraph{\textbf{Example 5}.}  In the last example, we solve another
three-dimensional test problem. The domain and the meshes are
selected the same as the previous example. For this test, the exact
solution is 
\begin{displaymath}
  \bm{u}(x, y, z) = \begin{bmatrix}
    \sin(\pi x) \cos(\pi y) e^{-2z} \\
    \cos(\pi x) \sin(\pi y) e^{-2z} \\ 
    \pi \cos(\pi x) \cos(\pi y) e^{-2z} \\
  \end{bmatrix}, \quad p(x, y, z) = x^2 + y^2 + z^2 - 1,
\end{displaymath}
and the source term and the boundary data are taken from $\bm{u}$ and
$p$. We list the numerical errors in approximation to the gradient of
the velocity and pressure in Tab.~\ref{tab_ex5Up} and the numerical
errors of the velocity are given in Tab.~\ref{tab_ex5u}. It can be
clearly seen that the convergence rates for all unknowns are optimal
under energy norms, and the old/even situation of $L^2$ errors is also
observed. In addition, we plot the numerical solution with the mesh
resolution $h = 1/16$ and the accuracy $m = 3$ to show the velocity
field and the contour of  $| \bm{u}_h |$ in Fig.~\ref{fig_ex5slice}.

\begin{figure}[htp]
  \centering
  \includegraphics[height=1.8in]{./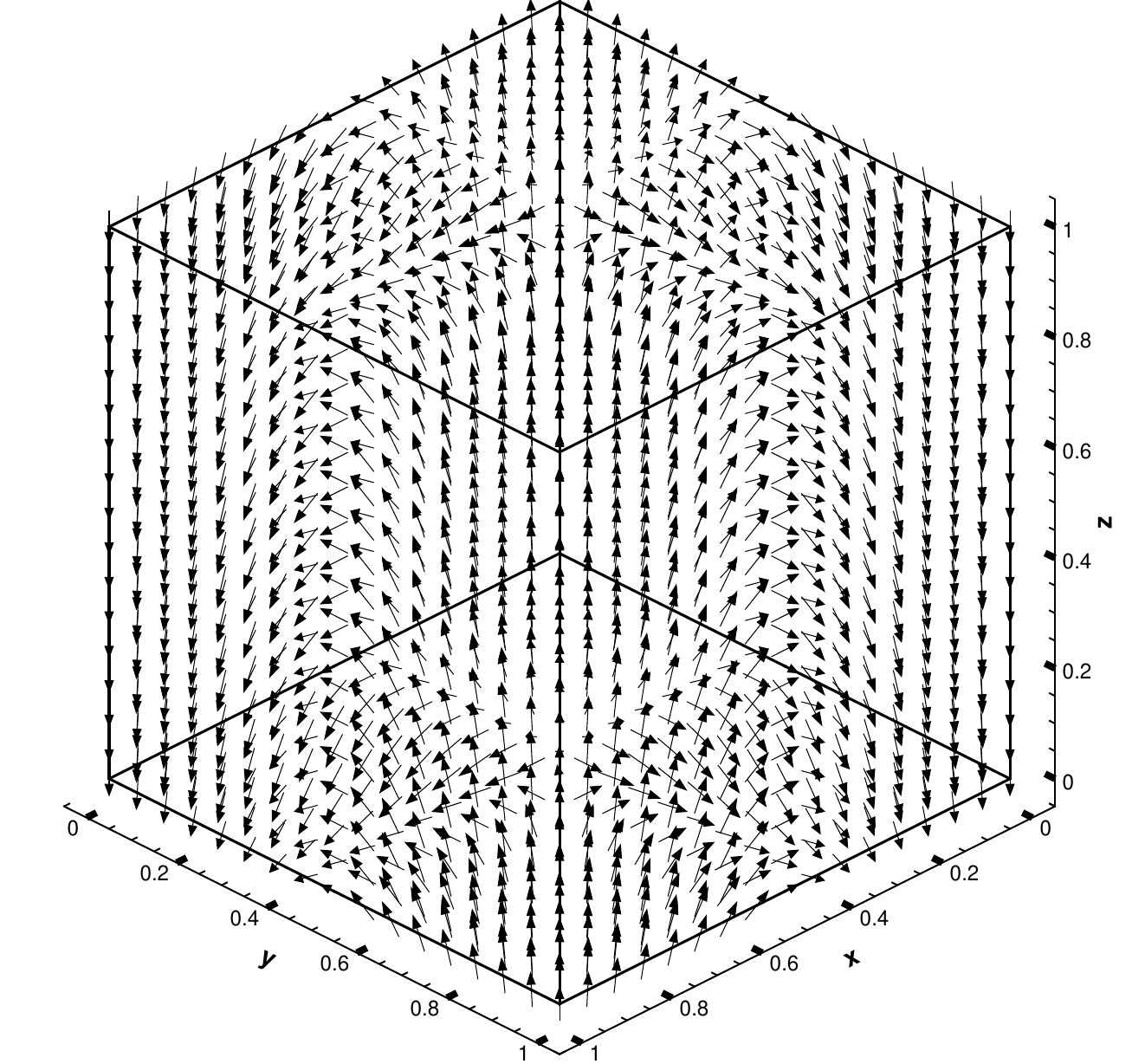}
  \hspace{35pt}
  \includegraphics[height=1.8in]{./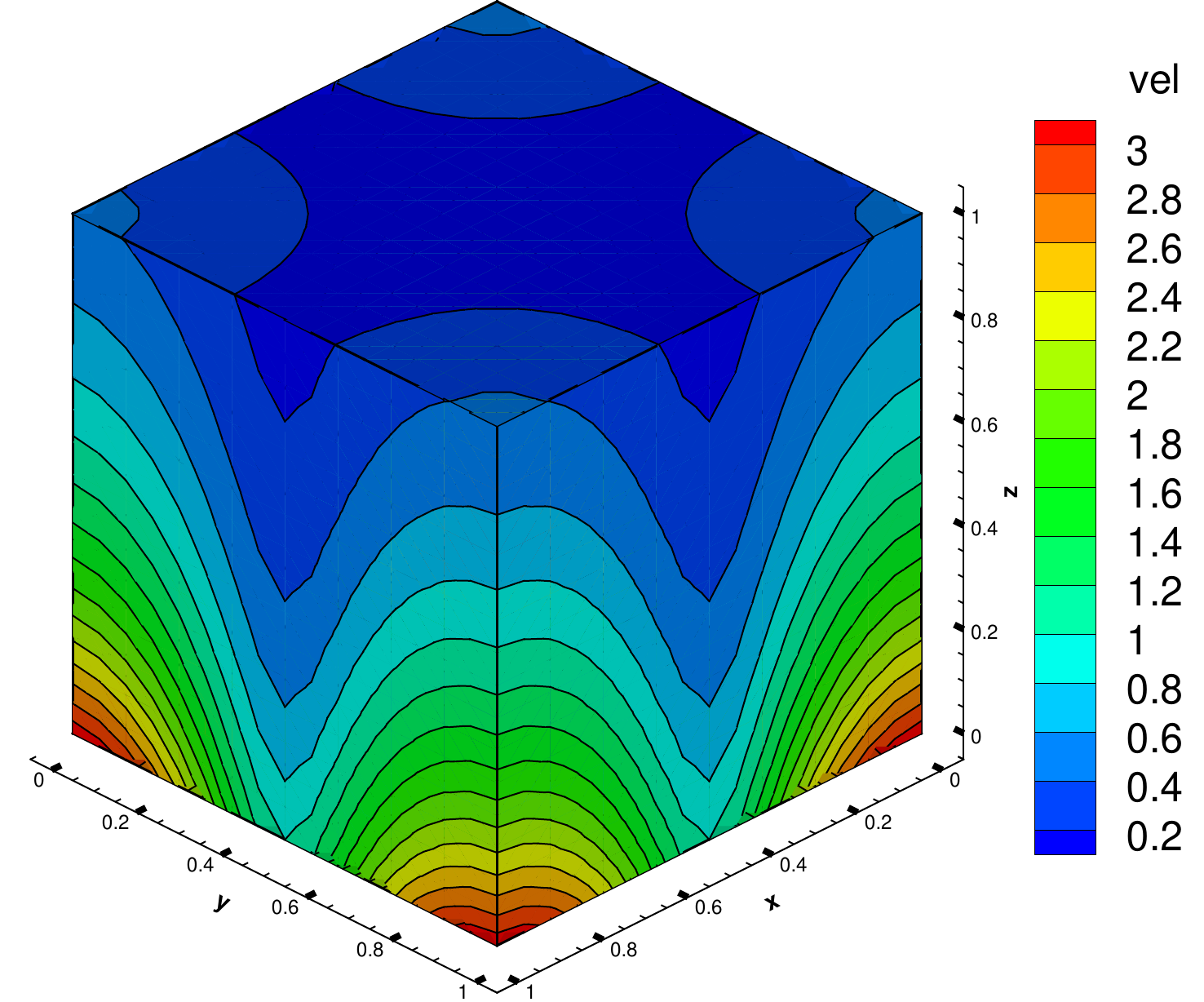}
  \caption{The velocity field (left) / the contour of $ | \bm{u}_h|$
  (right) for Example 5.}
  \label{fig_ex5slice}
\end{figure}

\begin{table}
  \centering
  \renewcommand\arraystretch{1.2}
  \scalebox{0.93}{
  \begin{tabular}{p{0.3cm} | c | c | c | c |
    c |c | c  }
    \hline\hline
    $m$ & $h$ &  $\Unorm{\bmr{U} - \bmr{U}_h} + \pnorm{p - p_h} $ &
    order & $\| \bmr{U} - \bmr{U}_h \|_{L^2(\Omega)}$ & order & $ \| p
    - p_h \|_{L^2(\Omega)}$ & order \\
    \hline
    \multirow{3}{*}{$1$} 
    & $1/4$ & 7.312e-0  & -  & 3.516e-1 & - & 1.932e-1 & - \\
    \cline{2-8}
    & $1/8$ & 3.691e-0 & 0.98  & 1.101e-1 & 1.67 & 9.032e-2 & 1.10 \\
    \cline{2-8}
    & $1/16$ & 1.831e-0 & 1.01  & 3.105e-2 & 1.82 & 3.132e-2 & 1.53 \\
    \hline
    \multirow{3}{*}{$2$} 
    & $1/4$ & 1.946e-0  & -  & 5.619e-2 & - & 7.300e-2 & - \\
    \cline{2-8}
    & $1/8$ & 5.082e-1 & 1.93  & 1.163e-2 & 2.27 & 1.921e-2 & 1.92 \\
    \cline{2-8}
    & $1/16$ & 1.287e-1 & 1.98  & 2.646e-3 & 2.12 & 4.932e-3 & 1.98 \\
    \hline
    \multirow{3}{*}{$3$} 
    & $1/4$ & 7.028e-1  & -  & 2.878e-2 & - & 2.609e-2 & - \\
    \cline{2-8}
    & $1/8$ & 7.992e-2 & 3.13  & 1.881e-3 & 3.93 & 1.848e-3 & 3.82 \\
    \cline{2-8}
    & $1/16$ & 9.250e-4 & 3.11  & 1.181e-4 & 4.01 & 1.093e-4 & 4.08 \\
    \hline\hline
  \end{tabular}}
  \caption{The numerical results of  $\Unorm{\bmr{U} - \bmr{U}_h} +
  \pnorm{p - p_h} $, $\| \bmr{U} - \bmr{U}_h
  \|_{L^2(\Omega)}$ and  $ \| p - p_h \|_{L^2(\Omega)}$ for
  Example 5.}
  \label{tab_ex5Up}
\end{table}

\begin{table}
  \centering
  \renewcommand\arraystretch{1.2}
  \begin{tabular}{ p{0.3cm} | c | c | c |c | c  }
    \hline\hline 
    $m$ & $h$ &  $\unorm{\bm{u} - \bm{u}_h} $ & order & $\|\bm{u} -
    \bm{u}_h \|_{L^2(\Omega)}$ & order \\
    \hline  
    \multirow{3}{*}{$1$} 
    & $1/4 $  & 1.217e-0 & - & 6.262e-2 & - \\
    \cline{2-6}
    & $1/8 $  & 6.123e-1 & 0.99 & 2.155e-2 & 1.53 \\ 
    \cline{2-6}
    & $1/16$  & 3.041e-1 & 1.00 & 6.256e-3 & 1.78 \\ 
    \hline 
    \multirow{3}{*}{$2$} 
    & $1/4 $  & 3.311e-1 & -   & 1.839e-2 & - \\
    \cline{2-6}
    & $1/8 $  & 8.045e-2 & 2.04 & 3.552e-3 & 2.37 \\ 
    \cline{2-6}
    & $1/16$  & 1.975e-2 & 2.02 & 7.529e-4 & 2.23 \\ 
    \hline 
    \multirow{3}{*}{$3$} 
    & $1/4 $  & 1.157e-1 & -   & 6.098e-3 & - \\
    \cline{2-6}
    & $1/8 $  & 1.336e-2 & 3.11 & 3.874e-4 & 3.97 \\ 
    \cline{2-6}
    & $1/16$  & 1.561e-3 & 3.09 & 2.357e-5 & 4.03 \\ 
    \hline\hline
  \end{tabular}
  \caption{The numerical results of $\unorm{\bm{u} - \bm{u}_h} $ and
  $\|\bm{u} - \bm{u}_h \|_{L^2(\Omega)}$ for Example 5. }
  \label{tab_ex5u}
\end{table}

\section{Conclusion}
\label{sec_conclusion}
We constructed three types of approximation spaces by patch
reconstructions. These reconstructed discontinuous spaces allow us to
numerically solve the Stokes problem in two sequential steps. In the
three spaces, the gradient of velocity, the velocity and the pressure
are approximated, respectively. We first employed a reconstructed
space that consists of piecewise curl-free polynomials with zero trace
to approximate the gradient of the velocity and the pressure. Then we
obtained the approximation to the velocity in the reconstructed
piecewise divergence-free space. The convergence rates for all
unknowns under $L^2$ norms and energy norms are derived. We presented
a series of numerical tests in two and three dimensions to verify the
error estimates and illustrate the great flexibility of the method we
proposed. In addition, the computer program is able to handle
approximation spaces of any high order and the elements with various
geometry in a uniform manner.

\section*{Acknowledgements}
This research was supported by the Science Challenge Project (No.
TZ2016002) and the National Natural Science Foundation in China (No.
11971041 and 11421101).


\begin{appendix}
  \section{}
  \label{sec_appendix}
  In Appendix, we present the detailed computer implementation of
  constructing the approximation spaces introduced in Section
  \ref{sec_space}. The construction contains two steps that are
  constructing element patches and solving local least squares
  problems on every element. We first give the recursive algorithm to
  the construction of the element patch, see Alg.~\ref{alg_patch}.

  \begin{algorithm}[htb]
    \caption{Construction to Element Patch}
    \label{alg_patch}
    \begin{algorithmic}[1]
      \renewcommand{\algorithmicrequire}{\textbf{Input:}}
      \REQUIRE
      a partition $\MTh$ and a threshold $\# S$; \\
      \renewcommand{\algorithmicrequire}{\textbf{Output:}}
      \REQUIRE
      the element patch of each element in $\MTh$; \\
      \FOR{every $K \in \MTh$}
      \STATE{initialize $t=0$, $S_t(K) = \left\{ K \right\}$;}
      \WHILE{the cardinality of $S_t(K) < \# S$}
      \STATE{set $S_{t+1}(K) = S_t(K)$;}
      \FOR{every $\wt{K} \in S_t(K)$}
      \STATE{add all adjacent face-neighbouring elements of $\wt{K}$
      to $S_{t+1}(K)$;}
      \ENDFOR
      \STATE{let $t = t+1$;}
      \ENDWHILE
      \STATE{collect collocation points of all elements in $S_t(K)$ in
      $\mc{I}(K)$;}
      \STATE{sort the distances between points in $\mc{I}(K)$ and
      $\bm{x}_K$;}
      \STATE{select the $\# S$ smallest values and collect the
      corresponding elements to form $S(K)$;}
      \ENDFOR
    \end{algorithmic}
  \end{algorithm}

  Then we will explain how to solve the least squares problems
  \eqref{eq_scalarls}, \eqref{eq_vectorlsproblem} and
  \eqref{eq_tensorlsproblem}. The key of solving least squares
  problems is to construct a group of polynomial bases that satisfy
  the constraints in \eqref{eq_vectorlsproblem} and
  \eqref{eq_tensorlsproblem}. For \eqref{eq_vectorlsproblem}, we shall
  construct the bases of the divergence-free polynomial space, that is
  $\mb{P}_k(D)^d \cap \bmr{S}^0(D)$, here $D$ is a bounded domain and
  $k$ is a positive integer. In two dimensions, we can directly take
  the curl of the natural polynomial bases 
  \begin{displaymath}
    1, x, y, x^2, xy, y^2, x^3, x^2y, xy^2, y^3, \ldots
  \end{displaymath}
  to obtain the bases of divergence-free polynomials, as illustrated
  in \cite{baker1990piecewise}. For example, if the linear accuracy is
  considered we have that 
  \begin{displaymath}
    \mb{P}_1(D)^2 \cap \bmr{S}^0(D) = \text{span} \left\{ \vect{1}{0},
    \vect{0}{1}, \vect{0}{x}, \vect{y}{0}, \vect{x}{-y} \right\}.
  \end{displaymath}
  For the second-order case, it is easy to find that 
  \begin{displaymath}
    \mb{P}_2(D)^2 \cap \bmr{S}^0(D) = \text{span} \left\{
    \vect{1}{0}, \vect{0}{1}, \vect{0}{x}, \vect{y}{0}, \vect{x}{-y},
    \vect{0}{x^2}, \vect{x^2}{-2xy}, \vect{-2xy}{y^2}, \vect{y^2}{0}
    \right\}.
  \end{displaymath}

  To get a group of divergence-free polynomials bases in three
  dimensions is a bit more complicated and  we outline a method
  which is easy to implement. We construct two groups of polynomials
  $\wt{\bmr{S}}_k^1(D)$ and $\wt{\bmr{S}}_k^2(D)$ whose union actually
  forms a group of bases. The first group $\wt{\bmr{S}}_k^1(D)$
  consists of the vector-valued polynomials which only have one
  nonzero entry.  Specifically speaking, $\wt{\bmr{S}}_k^1(D)$ has
  three types of polynomials, that is $\wt{\bmr{S}}_k^1(D) =
  \bmr{Q}_k^1(D) \cup \bmr{Q}_k^2(D) \cup \bmr{Q}_k^3(D)$, where 
  \begin{equation}
    \begin{aligned}
      \bmr{Q}_k^1(D) := \left\{ \bm{p} = (p_1, 0, 0)^T \in
      \mb{P}_k(D)^3 \ |\  p_1(y, z) \in \mb{P}_k(y, z)\right\}, \\
      \bmr{Q}_k^2(D) := \left\{ \bm{p} = (0, p_2, 0)^T \in
      \mb{P}_k(D)^3 \ |\  p_2(x, z) \in \mb{P}_k(x, z)\right\}, \\
      \bmr{Q}_k^3(D) := \left\{ \bm{p} = (0, 0, p_3)^T \in
      \mb{P}_k(D)^3 \ |\  p_3(y, z) \in \mb{P}_k(x, y)\right\}, \\
    \end{aligned}
    \label{eq_S123}
  \end{equation}
  and $\mb{P}_k(a, b)$ denotes the polynomial space of degree $k$
  based on the coordinate $(a, b)$,
  \begin{displaymath}
    \mb{P}_k(a, b) = \text{span}\left\{ 1, a, b, a^2, ab, b^2, \ldots,
    a^k, a^{k - 1}b, \ldots, ab^{k - 1}, b^k \right\}.
  \end{displaymath}
  Then $\wt{\bmr{S}}_k^2(D)$ has two types of polynomials, that is
  $\wt{\bmr{S}}_k^2(D) = \bmr{R}_k^1(D) \cup \bmr{R}_k^2(D)$, where 
  \begin{equation}
    \begin{aligned}
      \bmr{R}_k^1(D) := \bigg\{  \bm{p} = (p_1, p_2, 0)^T \in
      \mb{P}_k(D)^3 \  \big|\   p_1  = x^t q, & \  p_2 = - \int q(y,
      z) dy, \\
      q(y, z) &\in \mb{P}_{k - t}(y, z), \ 1 \leq t \leq k \bigg\}, \\
      \bmr{R}_k^2(D) := \bigg\{  \bm{p} = (p_1, 0, p_3 )^T \in
      \mb{P}_k(D)^3 \  \big|\   p_1  = x^t q, & \  p_3 = - \int q(y,
      z) dz, \\
      q(y, z) &\in \mb{P}_{k - t}(y, z), \ 1 \leq t \leq k \bigg\}. \\
    \end{aligned}
    \label{eq_R12}
  \end{equation}
  It is trivial to verify that the polynomials in
  $\wt{\bmr{S}}_k^1(D)$ and $\wt{\bmr{S}}_k^2(D)$ are divergence-free,
  and we state the following lemma. 
  \begin{lemma}
    The divergence-free polynomial space $\mb{P}_k(D)^3 \cap
    \bmr{S}^0(D)  $ satisfies that $\mb{P}_k(D)^3 \cap \bmr{S}^0(D) =
    \wt{\bmr{S}}_k^1(D) \cup \wt{\bmr{S}}_k^2(D)$. 
    \label{le_solenoidbasis}
  \end{lemma}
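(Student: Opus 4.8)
\emph{The plan} is to show that $\wt{\bmr{S}}_k^1(D)\cup\wt{\bmr{S}}_k^2(D)$ is a basis of $\mb{P}_k(D)^3\cap\bmr{S}^0(D)$, proving spanning and then linear independence. The inclusion $\operatorname{span}(\wt{\bmr{S}}_k^1(D)\cup\wt{\bmr{S}}_k^2(D))\subseteq\mb{P}_k(D)^3\cap\bmr{S}^0(D)$ is the easy half: every listed field has degree at most $k$, and differentiating shows each is divergence-free (for the $\bmr{R}$-type fields one reads the second, resp. third, component as the $y$-, resp. $z$-primitive of the first component that renders the field solenoidal and that vanishes on $\{y=0\}$, resp. $\{z=0\}$). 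For the reverse inclusion I would peel a general solenoidal polynomial apart layer by layer in the variable $x$.

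\emph{The reduction.} Let $\bm{p}=(p_1,p_2,p_3)^T\in\mb{P}_k(D)^3$ with $\nabla\cdot\bm{p}=0$, and expand $p_i=\sum_{t\ge0}x^tp_i^{(t)}(y,z)$ with $p_i^{(t)}\in\mb{P}_{k-t}(y,z)$; the divergence-free condition becomes $(t+1)p_1^{(t+1)}=-\partial_yp_2^{(t)}-\partial_zp_3^{(t)}$, so $p_1^{(0)}$ together with all of the $p_2^{(t)},p_3^{(t)}$ are the free data. First subtract $(p_1^{(0)},0,0)^T\in\bmr{Q}_k^1(D)$ to arrange $p_1^{(0)}=0$. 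Next subtract from $\bmr{Q}_k^2(D)$ the field with second component $\sum_tx^tp_2^{(t)}(0,z)$ and from $\bmr{Q}_k^3(D)$ the field with third component $\sum_tx^tp_3^{(t)}(y,0)$ — each is solenoidal because its only nonzero component omits the differentiated variable — so that afterwards every $p_2^{(t)}$ is divisible by $y$ and every $p_3^{(t)}$ by $z$. Finally, for $t=1,\dots,k$, subtract the $\bmr{R}_k^1(D)$-generator with first component $x^tq$, $q=-\tfrac1t\partial_yp_2^{(t-1)}$, and the $\bmr{R}_k^2(D)$-generator with first component $x^tq'$, $q'=-\tfrac1t\partial_zp_3^{(t-1)}$: since $p_2^{(t-1)}$, resp. $p_3^{(t-1)}$, is by now divisible by $y$, resp. $z$, it is exactly the primitive built into that generator, so these subtractions cancel $p_2$ and $p_3$ identically, and the relation $(t+1)p_1^{(t+1)}=-\partial_yp_2^{(t)}-\partial_zp_3^{(t)}$ together with $p_1^{(0)}=0$ then forces $p_1\equiv0$. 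Hence the remainder is zero and $\bm{p}$ is a linear combination of the generators.

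\emph{Basis property and main obstacle.} To upgrade spanning to a basis I would count dimensions: $\dim\bmr{Q}_k^i(D)=\binom{k+2}{2}$ for $i=1,2,3$ and $\dim\bmr{R}_k^j(D)=\sum_{t=1}^k\binom{k-t+2}{2}=\binom{k+2}{3}$ for $j=1,2$, so the union has $3\binom{k+2}{2}+2\binom{k+2}{3}$ elements, whereas surjectivity of $\nabla\cdot:\mb{P}_k(D)^3\to\mb{P}_{k-1}(D)$ (a polynomial right inverse is $g\mapsto(\int_0^xg\,\mathrm{d}x',0,0)^T$) gives, via Pascal's rule, $\dim\!\bigl(\mb{P}_k(D)^3\cap\bmr{S}^0(D)\bigr)=3\binom{k+3}{3}-\binom{k+2}{3}=3\binom{k+2}{2}+2\binom{k+2}{3}$; a spanning set of the correct cardinality is a basis. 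The only genuinely delicate point is the last reduction step: the $\bmr{R}$-generators carry an internal primitive, and it is precisely the normalization secured in the preceding step (divisibility by $y$, resp. $z$) that makes this primitive coincide with the field currently being reduced — keeping the powers of $x$ and the factors $1/t$ aligned is the one piece of bookkeeping that needs care. Everything else is elementary polynomial algebra and uses no assumption on the domain $D$.
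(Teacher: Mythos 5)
Your proof is correct, but it takes a genuinely different route from the paper's. The paper's argument is a pure dimension count: it shows $\wt{\bmr{S}}_k^1(D) \cap \wt{\bmr{S}}_k^2(D) = \{\bm{0}\}$ (the first component of an $\wt{\bmr{S}}_k^1$-field is independent of $x$ while that of a nonzero $\wt{\bmr{S}}_k^2$-field is divisible by $x$), computes $\dim \wt{\bmr{S}}_k^1(D) + \dim \wt{\bmr{S}}_k^2(D)$, and matches it against the known dimension $3C_{k+3}^3 - C_{k+2}^3$ of $\mb{P}_k(D)^3 \cap \bmr{S}^0(D)$, which is quoted from Baker--Jureidini--Karakashian; combined with the (unstated in the proof, but asserted just before the lemma) fact that the generators are divergence-free, equality of the spaces follows. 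You instead prove the substantive inclusion constructively, peeling a general solenoidal polynomial into generators via the $x$-expansion and the recursion $(t+1)p_1^{(t+1)} = -\partial_y p_2^{(t)} - \partial_z p_3^{(t)}$, and you use the dimension count only to upgrade spanning to a basis, rederiving $\dim(\mb{P}_k(D)^3 \cap \bmr{S}^0(D))$ from surjectivity of the divergence rather than citing it. Your version is longer but self-contained and effectively algorithmic (it tells you how to expand a given solenoidal polynomial in the generators, which is in the spirit of this implementation-oriented appendix); the paper's is shorter but leans on an external dimension formula and never exhibits the spanning explicitly. One point you handle that deserves to be made explicit: as literally printed, the second (resp.\ third) component $-\int q\,dy$ in the definition of $\bmr{R}_k^1(D)$ (resp.\ $\bmr{R}_k^2(D)$) does not make the field divergence-free for $t \geq 2$ (the divergence is $tx^{t-1}q - q$); the primitive must carry the factor $tx^{t-1}$, i.e.\ $p_2 = -tx^{t-1}\int q\,dy$. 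You silently repair this by reading the primitive as "the one that renders the field solenoidal," and your reduction step relies on that corrected form, so your argument is sound against the intended definition rather than the printed one.
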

  \begin{proof}
    We let $\bm{q} \in \mb{P}_k(D)^3$ such that $\bm{q} =
    \wt{\bmr{S}}_k^1(D) \cap \wt{\bmr{S}}_k^2(D)$. By the definition
    \eqref{eq_S123}, the first entry of $\bm{q}$ only depends on $y$
    and $z$. From \eqref{eq_R12}, the first entry of $\bm{q}$ must
    rely on $x$, which gives $\bm{q} = \bm{0}$. Hence, we have that
    $\wt{\bmr{S}}_k^1(D) \cap \wt{\bmr{S}}_k^2(D) = \left\{ \bm{0}
    \right\}$. From \eqref{eq_S123} and \eqref{eq_R12}, we can know
    that $\text{dim}(\wt{\bmr{S}}_k^1(D)) = 3(k+2)(k+1)/2$ and
    $\text{dim}(\wt{\bmr{S}}_k^2(D)) = (k+2)(k+1)k/3$. By
    \cite{baker1990piecewise}, we have that $\text{dim}(\mb{P}_k(D)^3
    \cap \bmr{S}^0(D)) = 3C_{k+3}^3 - C_{k+2}^3$, which exactly
    implies $\text{dim}(\mb{P}_k(D)^3 \cap \bmr{S}^0(D)) =
    \text{dim}(\wt{\bmr{S}}_k^1(D)) +
    \text{dim}(\wt{\bmr{S}}_k^2(D))$. This fact gives us that $
    \mb{P}_k(D)^3 \cap \bmr{S}^0(D) = \wt{\bmr{S}}_k^1(D) \cup
    \wt{\bmr{S}}_k^2(D)$ and completes the proof. 
  \end{proof}
  Further, we give an example of the linear accuracy. In this case, we
  can obtain that 
  \begin{displaymath}
    \wt{\bmr{S}}_1^1(D) = \text{span} \left\{ \vech{1}{0}{0},
    \vech{0}{1}{0}, \vech{0}{0}{1}, \vech{y}{0}{0}, \vech{z}{0}{0},
    \vech{0}{x}{0}, \vech{0}{z}{0}, \vech{0}{0}{y}, \vech{0}{0}{z}
    \right\},
  \end{displaymath}
  and 
  \begin{displaymath}
    \wt{\bmr{S}}_2^2(D) = \text{span} \left\{ \vech{x}{-y}{0},
    \vech{x}{0}{-z} \right\}.
  \end{displaymath}
  Hence, $\mb{P}_1(D)^3 \cap \bmr{S}^0(D) =  \wt{\bmr{S}}_1^1(D) \cup
  \wt{\bmr{S}}_1^2(D)$.  
  
  Then we consider to solve the problem \eqref{eq_tensorlsproblem},
  which requires us to construct the polynomial space consists of the
  curl-free polynomials with zero trace, that is $\mb{P}_k(D)^{d
  \times d} \cap \bmr{I}^0(D) $. Actually after obtaining the bases of
  the divergence-free polynomial space, it is easy to get the bases of
  the polynomial space $\mb{P}_k(D)^{d \times d} \cap \bmr{I}^0(D)$.
  We can take the gradient of the divergence-free polynomial bases to
  get those bases. Again we take $k=1$ for an example and we can
  obtain that 
  \begin{displaymath}
    \mb{P}_1(D)^{d \times d} \cap \bmr{I}^0(D) = \text{span} \left\{
    \matt{0}{0}{1}{0}, \matt{0}{1}{0}{0}, \matt{1}{0}{0}{-1},
    \matt{0}{0}{x}{0}, \matt{x}{0}{-y}{-x}, \matt{-y}{-x}{0}{y},
    \matt{0}{y}{0}{0} \right\}.
  \end{displaymath}

  In the rest of Appendix, we present the details of the computer
  implementation to the reconstructed space. Let us construct the
  space $\bmr{I}_h^1$ in two dimensions as an illustration.  We
  consider the element $K_0$ and we let its element patch $S(K_0)$
  formed by its face-neighbouring elements, see
  Fig.~\ref{fig_Kexample}. 

  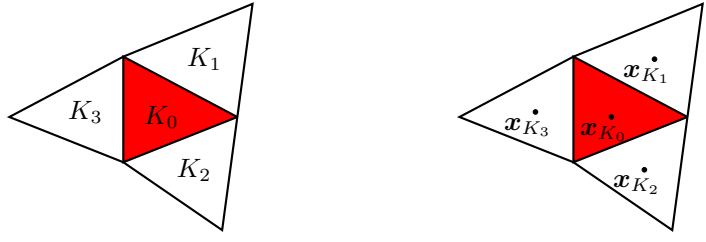
\begin{figure}[htp]
    \centering
    \begin{tikzpicture}[scale=1]
      \coordinate (A) at (1, 0); 
      \coordinate (B) at (-0.5, -0.6);
      \coordinate (C) at (-0.5, 0.8);
      \coordinate (D) at (1.2, 1.5);
      \coordinate (E) at (-2, 0);
      \coordinate (F) at (0.8, -1.5);
      \draw[fill, red] (A) -- (B) -- (C);
      \draw[thick, black] (A) -- (C) -- (B) -- (A);
      \draw[thick, black] (A) -- (D) -- (C);
      \draw[thick, black] (C) -- (E) -- (B);
      \draw[thick, black] (A) -- (F) -- (B);
      \node at(0, 0) {$K_0$}; \node at(1.7/3, 2.3/3) {$K_1$};
      \node at(1.3/3, -2.1/3) {$K_2$}; \node at(-1, 0.2/3) {$K_3$};
    \end{tikzpicture}
    \hspace{70pt}
    \begin{tikzpicture}[scale=1]
      \coordinate (A) at (1, 0); 
      \coordinate (B) at (-0.5, -0.6);
      \coordinate (C) at (-0.5, 0.8);
      \coordinate (D) at (1.2, 1.5);
      \coordinate (E) at (-2, 0);
      \coordinate (F) at (0.8, -1.5);
      \draw[fill, red] (A) -- (B) -- (C);
      \draw[thick, black] (A) -- (C) -- (B) -- (A);
      \draw[thick, black] (A) -- (D) -- (C);
      \draw[thick, black] (C) -- (E) -- (B);
      \draw[thick, black] (A) -- (F) -- (B);
      \draw[fill, black] (0, 0) circle [radius=0.03]; 
      \draw[fill, black] (1.7/3, 2.3/3)  circle [radius=0.03]; 
      \draw[fill, black] (1.3/3, -2.1/3) circle [radius=0.03];  
      \draw[fill, black] (-1, 0.2/3)     circle [radius=0.03];
      \node at(-0.1, -0.2) {$\bm{x}_{K_0}$}; 
      \node at(1.7/3-0.1, 2.3/3-0.2)  {$\bm{x}_{K_1}$};
      \node at(1.3/3-0.1, -2.1/3-0.2) {$\bm{x}_{K_2}$}; 
      \node at(-1-0.1, 0.2/3-0.2)     {$\bm{x}_{K_3}$};
    \end{tikzpicture}
    \caption{$K_0$ and its neighbours (left) / collocation points
    (right).}
    \label{fig_Kexample}
  \end{figure}

  For a tensor-valued function $\bm{g} = (g_{00}, g_{01}; g_{10},
  g_{11})^T \in \bmr{U}_h (g_{00} = -g_{11})$, the least squares
  problem \eqref{eq_vectorlsproblem} on $S(K_0)$ takes the form 
  \begin{equation}
    \begin{aligned}
      \wh{\mc{R}}_{K_0}^1 \bm{g} =  \mathop{\arg \min}_{\bm{q} \in
      \mb{P}_m(S(K_0))^{2 \times } \cap \bmr{I}^0(S(K_0))}
      \sum_{\bm{x} \in \mc{I}(K_0)} \| \bm{q}(\bm{x}) -
      \bm{g}(\bm{x}) \|^2_{l^2 \times l^2}, \quad 
      \text{s.t. } 
        \bm{q}(\bm{x}_{K_0}) = \bm{g}(\bm{x}_{K_0}).
    \end{aligned}
    \label{eq_lsproblemK0}
  \end{equation}
  From the bases of the polynomial space $\mb{P}_1(S(K_0))^{2}
  \cap \bmr{I}^0(S(K))$, the polynomial $\bm{p}(\bm{x})$ in
  \eqref{eq_lsproblemK0} has the form 
  \begin{displaymath}
    \begin{aligned}
      \bm{p}(\bm{x}) = a_0 \matt{1}{0}{0}{-1} +  a_1 \matt{0}{1}{0}{0}
      + & a_2 \matt{0}{0}{1}{0} \\ 
      + a_3 \matt{0}{0}{x}{0}  + & a_4 \matt{x}{0}{-y}{-x} + a_5
      \matt{-y}{-x}{0}{y} + a_6 \matt{0}{y}{0}{0}. \\
    \end{aligned}
  \end{displaymath}
  By the constraint in \eqref{eq_lsproblemK0}, we can know the values
  of $a_0$, $a_1$ and $a_2$ and we rewrite the polynomial
  $\bm{p}(\bm{x})$ as 
  \begin{displaymath}
    \begin{aligned}
      \bm{p}(\bm{x}) = \matt{g_{00}(\bm{x}_{K_0})
      }{g_{10}(\bm{x}_{K_0})}{g_{01}(\bm{x}_{K_0})
      }{g_{11}(\bm{x}_{K_0}) } + &
      a_0 \matt{0}{0}{x - x_{K_0} }{0}  +  a_1 \matt{x - x_{K_0}
      }{0}{-y + y_{K_0} }{-x + x_{K_0} } \\ 
      +&  a_2 \matt{-y + y_{K_0}}{-x + x_{K_0} }{0}{y - y_{K_0} }
      + a_3 \matt{0}{y - y_{K_0} }{0}{0}, \\
    \end{aligned}
  \end{displaymath}
  where $\bm{x}_{K_i} = (x_{K_i}, y_{K_i})(0 \leq i \leq 3)$. Thus
  the problem \eqref{eq_lsproblemK0} is equivalent to 
  \begin{equation}
    \begin{aligned}
      \mathop{\arg \min}_{a_3, a_4, a_5, a_6 \in \mb{R}}  \sum_{i
      = 1}^3 & \bigg\|  a_3 \matt{0}{0}{x_{K_i} - x_{K_0} }{0}  +
      a_4 \matt{x_{K_i} - x_{K_0} }{0}{-y_{K_i} + y_{K_0} }{-x_{K_i} +
      x_{K_0} } + a_5 \matt{-y_{K_i} + y_{K_0}}{-x_{K_i} + x_{K_0}
      }{0}{y_{K_i} - y_{K_0} } \\ 
      + & a_6 \matt{0}{y_{K_i} - y_{K_0} }{0}{0} -
      \matt{g_{00}(\bm{x}_{K_i}) - g_{00}(\bm{x}_{K_0})
      }{g_{10}(\bm{x}_{K_i}) - g_{10}(\bm{x}_{K_0})}{
      g_{01}(\bm{x}_{K_i}) - g_{01}(\bm{x}_{K_0})
      }{g_{11}(\bm{x}_{K_i}) - g_{11}(\bm{x}_{K_0}) } \bigg\|^2_{l^2
      \times l^2}.
    \end{aligned}
    \label{eq_a0a3}
  \end{equation}
  The solution to \eqref{eq_a0a3} reads
  \begin{displaymath}
    \begin{bmatrix}
      a_3 \\ a_4 \\ a_5 \\ a_6 
    \end{bmatrix} = (A^TA)^{-1} A^T \begin{bmatrix}
      2(g_{00}(\bm{x}_{K_1}) - g_{00}(\bm{x}_{K_0})) \\ 
      g_{01}(\bm{x}_{K_1}) - g_{01}(\bm{x}_{K_0}) \\
      g_{10}(\bm{x}_{K_1}) - g_{10}(\bm{x}_{K_0}) \\
      \cdots \\ 
      2 (g_{00}(\bm{x}_{K_3}) - g_{00}(\bm{x}_{K_0})) \\ 
      g_{01}(\bm{x}_{K_3}) - g_{01}(\bm{x}_{K_0}) \\
      g_{10}(\bm{x}_{K_3}) - g_{10}(\bm{x}_{K_0}) \\
    \end{bmatrix}, 
  \end{displaymath}
  where 
  \begin{displaymath}
    A =  \begin{bmatrix}
      0 &  2(x_{K_1} - x_{K_0}) &  - 2(y_{K_1} - y_{K_0}) &  0 \\ 
      x_{K_1} - x_{K_0} &  -y_{K_1} + y_{K_0} & 0 & 0   \\
      0 & 0 &  -x_{K_1} + x_{K_0} &  y_{K_1} - y_{K_0}  \\ 
      \cdots & \cdots &\cdots &\cdots  \\ 
      0 &  2(x_{K_3} - x_{K_0}) &  - 2(y_{K_3} - y_{K_0}) &  0 \\ 
      x_{K_3} - x_{K_0} &  -y_{K_3} + y_{K_0} & 0 & 0   \\
      0 & 0 &  -x_{K_3} + x_{K_0} &  y_{K_3} - y_{K_0}  \\ 
    \end{bmatrix}.
  \end{displaymath}
  By rearrangement, we can obtain the solution to
  \eqref{eq_lsproblemK0}, which takes the form
  \begin{displaymath}
    \begin{bmatrix}
      a_0 \\ a_1 \\a_2 \\a_3 \\a_4 \\a_5 \\ a_6 
    \end{bmatrix} = \begin{bmatrix}
      I_{3 \times 3}  & 0 \\
      -M I_{9 \times 3}  & M \\ 
    \end{bmatrix} \begin{bmatrix}
      g_{00}(\bm{x}_{K_0}) \\
      g_{01}(\bm{x}_{K_0}) \\
      g_{10}(\bm{x}_{K_0}) \\
      \cdots \\
      g_{00}(\bm{x}_{K_3}) \\
      g_{01}(\bm{x}_{K_3}) \\
      g_{10}(\bm{x}_{K_3}) \\
    \end{bmatrix}, \quad M = (A^TA)^{-1}A^T \begin{bmatrix}
      2 & 0              & 0  & 0 & 0 & 0\\ 
      0 & I_{2 \times 2} & 0  & 0 & 0 & 0\\ 
      0 & 0              & 2  & 0 & 0 & 0 \\
      0 & 0 & 0  & I_{2 \times 2} & 0 & 0 \\
      0 & 0 & 0  & 0 & 2 & 0  \\
      0 & 0 & 0  & 0 & 0 & I_{2 \times 2}\\
    \end{bmatrix}, 
  \end{displaymath}
  where $I_{2 \times 2}$ and $I_{3 \times 3}$ are $2 \times 2$
  identity matrix and $3 \times 3$ identity matrix and $I_{9 \times 3}
  = (I_{3 \times 3}, I_{3 \times 3}, I_{3 \times 3})^T$. We note that
  the collocation points in $\mc{I}(K)$ totally determine the matrix
  $M$, and by the expansion \eqref{eq_tensorRmg}, the coefficient
  matrix 
  \begin{equation}
    \begin{bmatrix}
      I_{3 \times 3}  & 0 \\
      -M I_{9 \times 3}  & M \\ 
    \end{bmatrix} 
    \label{eq_coematrix}
  \end{equation}
  actually contains all information of the basis functions
  $\wh{\bm{\lambda}}_{K_i}^{j ,k}(0 \leq i \leq 3, 1 \leq j, k \leq 2,
  j + k < 4)$ on the element $K_0$. Then we can use the coefficient
  matrix \eqref{eq_coematrix} on each element to represent the
  reconstructed space $\bmr{I}^1_h$. For the spaces $U_h^1$ and
  $\bmr{S}_h^1$, their constructions are very similar.  In addition,
  such a computer implementation can be easily adapted to three
  dimensions and the case when higher-order accuracy is considered. 
\end{appendix}

\bibliographystyle{amsplain}
\bibliography{../ref}

\end{document}